\documentclass[10pt]{article}
\usepackage{mathrsfs}
\usepackage[numbers,sort&compress]{natbib}
\usepackage{amsthm,amsmath,amssymb,anysize}
\newtheorem{lemma}{Lemma}[section]
\newtheorem{theorem}[lemma]{Theorem}

\newtheorem{proposition}[lemma]{Proposition}
\newtheorem{definition}[lemma]{Definition}
\newtheorem{corollary}[lemma]{Corollary}
\newtheorem{example}{Example}[section]
\setlength{\parindent}{1em} \setlength{\baselineskip}{20pt}
\pagestyle{myheadings}
\marginsize{3.6cm}{3.6cm}{3.6cm}{3cm}
\numberwithin{equation}{section}

\markright{Fine  gradings and their Weyl groups for  twisted Heisenberg Lie superalgebras}

\title{{\textbf{Fine gradings and their Weyl groups for twisted Heisenberg Lie superalgebras}}\footnote{Supported by the NNSF of China (11471090, 11071187, 11171055)}}
\author{\textbf{Wenjuan Xie and Wende Liu\footnote{Corresponding author. E-mail address: wendeliu@ustc.edu.cn.}}\\
 \textit{School of Mathematical Sciences,} \\\textit{Harbin Normal University, Harbin 150025, China}\\
 }

\date{ }
\begin{document}

\maketitle
\begin{quotation}
{\noindent\textbf{Abstract}   In this  paper  we  define the so-called  twisted Heisenberg superalgebras over the complex number field by adding derivations to Heisenberg superalgebras. We  classify the fine  gradings up to equivalence on  twisted Heisenberg superalgebras and determine the Weyl groups of those gradings.}
 \\

{\noindent\textbf{Keywords}\quad Heisenberg superalgebra, fine grading, Weyl group}

 {\noindent
\textbf{2010 MR Subject Classification}  17B70, 17B40}
  \end{quotation}
\setcounter{section}{0}

\section{Introduction} \label{sect1}
 Group gradings (or simply, gradings) on Lie algebras emerged   at the beginning of Lie Theory and since then have been extensively used. The Cartan decomposition of a semisimple Lie algebra is one of the most interesting
fine gradings, which has a heavy influence in Lie algebras and their representation theory. There are also numerous applications of gradings on various algebraic structures. For instance, gradings on the tangent Lie algebras of  Lie groups are  used in symmetric spaces; finite cyclic group gradings of finite dimensional semisimple Lie algebras are used in Kac-Moody algebra theory, etc.

In the last ten years there has been an increasing interest in the  gradings on a well-known algebraic structure   and the study used to be focused on the simple ones including simple Lie algebras and simple Lie superalgebras. For instance, a classification
 of fine gradings up to equivalence on all classical simple Lie algebras  over an algebraically closed field of characteristic 0 has
been completed
(see \cite{Eld1}). For further information  pertaining to gradings on  various simple algebras  the reader is referred to \cite{Bah1,Bah2,Bah3,Bah4,Cal1,EMG,DM,DMV,DV,Eld0,Eld2,Eld3,EK,Ko} and the references therein. Recently, the gradings on  non-simple Lie algebras have also been considered.  Among them the gradings by abelian groups on  filiform algebras (which are nilpotent Lie algebras)  are
classified up to isomorphism (see \cite{Bah5}) and  the fine  gradings on  Heisenberg (super)algebras and twisted Heisenberg algebras (which are nilpotent or solvable Lie superalgebras) are classified up to equivalence (see \cite{Cal2}).

One of the important ingredients in the group grading theory is the  notion of Weyl groups, which is a generalization of the usual one for Lie algebras and in which the symmetry for a graded algebra is included  in a sense.
 Patera and Zassenhaus initiated the study of Weyl groups of Lie gradings (see \cite{Patera}).   The reader is also referred to more recent papers on Weyl groups of gradings  (see \cite{Eld4,Eld5}).

We are concerned with the so-called twisted Heisenberg superalgebras, which are  an infinite family of finite dimensional solvable Lie superalgebras, obtained by forming a semi-direct product of  a Heisenberg Lie superalgebra  and a derivation. It is well known that, as  in the non-super  case, Heisenberg superalgebras and their twisted deformations are very important
subjects in the research of both mathematics and physics.  In this paper, starting from defining twisted Heisenberg  superalgebras, we first classify all their fine gradings up to equivalence and
characterize their Weyl groups in a simplest case.  Compared with twisted Heisenberg algebras, we can find more than two fine gradings up to equivalence  and their Weyl groups are clear and concise in this case.
Finally, in terms of
blocks, we classify all the fine gradings  up to equivalence  on complex twisted Heisenberg  superalgebras. Furthermore, we determine all the Weyl groups of these gradings.

\section{Preliminaries} \label{ns}
All algebras in this paper are assumed to be over
the complex number field $\mathbb{C}$ and of finite dimensions. We should note that the main results also hold over an algebraically closed field of characteristic 0.

 We   briefly review some basic definitions  pertaining to gradings of superalgebras and for more details in non-super case the reader is referred to \cite{Cal2,Bah5,Eld1,Eld3}.
Let $A$ be a  superalgebra and   $G$  an abelian group.  A $G$-\textit{grading}  on $A$ is a $\mathbb{Z}_{2}$-graded vector space decomposition by $G$,
$$\Gamma:A=\bigoplus_{g\in G}A_{g},$$
 such that $A_{g}A_{h}\subset A_{gh}$ for all $g,h\in G$, in which each $A_{g}$ is called a homogeneous  component of degree $g$. The \textit{support} of the $G$-grading is the set $$S=\{g\in G\mid A_{g}\neq 0\}.$$
Suppose we are given another grading on the superalgebra $A_{g}$,
$$\Gamma': A=\bigoplus_{h\in H}A'_{h}$$
with support $S'$, where $H$ is an abelian group. The gradings $\Gamma$ and $\Gamma'$ are  said to be \textit{equivalent} if there is  a  superalgebra automorphism $\phi$ of $A$ equipped with a bijection $\sigma: S\longrightarrow S'$ such that $\phi(A_{s}) =A'_{\sigma(s)}$ for all $s\in S$; such an automorphism $\phi$ is called an equivalence between the gradings $\Gamma$ and $\Gamma'$.

The \textit{automorphism group} of $\Gamma$, denoted by $\mathrm{Aut}(\Gamma)$, is the group consisting of all the self-equivalences of $\Gamma$, i.e., automorphisms of the superalgebra $A$ which permute the components of $\Gamma$. The \textit{stabilizer} of $\Gamma$, denoted by $\mathrm{Stab}(\Gamma)$, is the subgroup of $\mathrm{Aut}(\Gamma)$ consisting of all the automorphisms of the superalgebra $A$ that leave each component of $\Gamma$ invariant. The quotient group $\mathrm{Aut}(\Gamma)/ \mathrm{Stab}(\Gamma)$ is called the \textit{Weyl group} of $\Gamma$, which is denoted by $\mathcal{W}(\Gamma)$. The \textit{diagonal group} of $\Gamma$, denoted by $\mathrm{Diag}(\Gamma)$,  consists of all automorphisms of the graded superalgebra $A$ such that each component $A_{g}$ of $\Gamma$ contained in some eigenspace of every $f\in \mathrm{Diag}(\Gamma)$. The grading $\Gamma$ of a superalgebra $A$ is said to be \textit{toral} if $\mathrm{Diag}(\Gamma)$ is contained in a torus of the algebraic group $\mathrm{Aut}(A)$.

A grading $\Gamma$ is called a \textit{refinement} of a grading $\Gamma'$ (or $\Gamma'$ a \textit{coarsening} of $\Gamma$) if each homogeneous component of $\Gamma'$ is a (direct) sum of some homogeneous components of $\Gamma$. A grading is said to be \textit{fine} if it admits no proper refinements in the obvious sense.

A $G$-grading $\Gamma:A=\bigoplus_{g\in G}A_{g}$ is said to be \textit{universal} if the following universal property holds: for any coarsening $A=\bigoplus_{g'\in G'}A'_{g'}$ of $\Gamma$ there exists a unique group epimorphism $\alpha:
 G\longrightarrow G'$ such that for $g'\in G'$,
 $$A'_{g'}=\bigoplus\limits_{g\in   \alpha^{-1}(g')}A_{g}.$$

  For any $G$-grading $\Gamma: A=\bigoplus_{g\in G}A_{g}$, there is a universal grading   equivalent to $\Gamma$. In fact,  if we write  $\widetilde{G}$ for the abelian group generated by the support of $\Gamma$ subject to the relations $g_{1}g_{2}=g_{3}$ if $0\neq[A_{g_{1}}, A_{g_{2}}] \subset A_{g_{3}}$, then  $\widetilde{\Gamma}: A=\bigoplus_{\widetilde{g}\in \widetilde{G}}A_{\widetilde{g}}$ is a universal  grading   equivalent to  $\Gamma$, where $A_{\widetilde{g}}$ is the sum of all the homogeneous components $A_{g}$ of $\Gamma$ such that the class of  $g$ in $\widetilde{G}$ is $\widetilde{g}$ (see \cite[Section 3.3]{Ko}).  The group $\widetilde{G}$ is called the \textit{universal grading group} of $\Gamma$. Recall that a fine grading is toral if and only if its universal grading group is torsion-free (see \cite[Section 2]{Cal2}).
  Since $A$ is a finite dimensional algebra,   $\widetilde{G}$ is a finitely generated abelian group.
In view of this fact, in the subsequent sections all the gradings are assumed to be  universal.

\section{Twisted Heisenberg superalgebras}

  Recall that  a Heisenberg superalgebra  is  a two-step nilpotent Lie superalgebra  with one-dimensional
even center (see \cite{Cal2}).   The even part of a Heisenberg superalgebra  is a Heisenberg algebra and hence must be of odd dimension. A Heisenberg superalgebra  of super-dimension $(n, m)$ with $n=2k+1$ has a homogeneous basis
\begin{equation}\label{heisenberg basis} \{z,e_{1},\widehat{e_{1}},\ldots,e_{k},\widehat{e_{k}}\mid w_{1},\ldots,w_{m}\}
\end{equation}
with nonzero brackets
$$[e_{i},\widehat{e_{i}}]=-[\widehat{e_{i}},e_{i}]=[w_{j},w_{j}]=z,$$
where $1\leq i\leq k, 1\leq j\leq m$. We denote by $H_{n,m}$ the Heisenberg superalgebra  of super-dimension $(n, m)$.

Let us define twisted Heisenberg superalgebra.
\begin{definition}
Let $n$ be a positive odd number and $m\in \mathbb{Z}_{\geq 0}$. Write $n=2k+1$ and suppose $2r\leq m,$ where $k,r\in \mathbb{Z}_{\geq 0}$. For $\lambda=(\lambda_{1},\ldots,\lambda_{k})\in (\mathbb{C^{\times}})^{k}$ and $\kappa=(\kappa_{1},\ldots,\kappa_{r})\in (\mathbb{C^{\times}})^{r}$,  a superspace of superdimension $(n+1, m)$
 with a homogeneous basis
\begin{equation}\label{first basis}
\{z,u,e_{1},\widehat{e_{1}},\ldots,e_{k},\widehat{e_{k}}\mid w_{1},\widehat{w_{1}},\ldots,w_{r},\widehat{w_{r}},\eta_{1},\ldots,\eta_{m-2r}\},
\end{equation}
becomes a Lie superalgebra with respect to the multiplication given by the following  nonzero   brackets ($i\leq k, j\leq r$, $t\leq m-2r$)
\begin{equation*}\begin{split}
&[e_{i},\widehat{e_{i}}]=\lambda_{i}z,~~[u,e_{i}]=\lambda_{i}\widehat{e_{i}},~~[u,\widehat{e_{i}}]=-\lambda_{i}e_{i},\\
&
 [u,w_{j}]=\kappa_{j}\widehat{w_{j}},~~[u,\widehat{w_{j}}]=-\kappa_{j}w_{j},\\
&[w_{j},w_{j}]=[\widehat{w_{j}},\widehat{w_{j}}]=[\eta_{t},\eta_{t}]=z.
\end{split}
\end{equation*}
 This Lie superalgebra is called  a twisted Heisenberg superalgebra, denoted by $H_{n,m}^{\lambda,\kappa}$.
\end{definition}
Clearly,  Lie superalgebra $H_{n,m}^{\lambda,\kappa}$ is not nilpotent, but it is solvable, since $[H_{n,m}^{\lambda,\kappa},H_{n,m}^{\lambda,\kappa}]=H_{n,m}$.

A twisted Heisenberg superalgebra is actually  a semi-direct product of a Heisenberg  superalgebra and an even derivation.
Note that the Heisenberg  superalgebra $H_{n,m}$ has an even derivation $u$ such that
\begin{equation}\label{twisted deri}
u(z)=0, ~~u(e_{i})=\lambda_{i}\widehat{e_{i}},~~u(\widehat{e_{i}})=-\lambda_{i}e_{i}
\end{equation}
for all $i\leq k$.
Define a Lie superalgebra $H_{n,m}^{u}:= H_{n,m}\rtimes\mathbb{C}u$.
Then it is clear that the even part of $H_{n,m}^{u}$ is a twisted Heisenberg algebra (see \cite[Definition 4]{Cal2}).
\begin{proposition}
Let $H_{n,m}$ be a Heisenberg superalgebra, where $n=2k+1$ is a positive odd number and $k,m\in \mathbb{Z}_{\geq 0}$.  Let $u$ be an even derivation of $H_{n,m}$ satisfying Eq. (\ref{twisted deri}) for some $\lambda=(\lambda_{1},\ldots,\lambda_{k})\in (\mathbb{C^{\times}})^{k}$. Then $H_{n,m}^{u}$ is isomorphic to  $H_{n,m}^{\lambda,\kappa}$ for  some $\kappa=(\kappa_{1},\ldots,\kappa_{r})\in (\mathbb{C^{\times}})^{r}$, where $r\in \mathbb{Z}_{\geq0}$ and $2r\leq m.$
\end{proposition}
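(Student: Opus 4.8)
The plan is to analyze the even derivation $u$ componentwise and then to reduce its action on the odd part to a canonical ``rotational'' form by an orthogonal change of basis. Since $u$ is even it preserves the $\mathbb Z_2$-grading of $H_{n,m}$, and on the even part its action is prescribed by Eq.~(\ref{twisted deri}); so I only need to understand $A:=u|_V$, where $V=\operatorname{span}\{w_1,\dots,w_m\}$ is the odd part. Writing $[w_i,w_j]=B_{ij}z$ defines a symmetric bilinear form $B$ on $V$ which, in the given basis, is the identity. Applying the even derivation rule to $[w_i,w_j]=B_{ij}z$ and using $u(z)=0$ yields $B(Aw_i,w_j)+B(w_i,Aw_j)=0$, i.e. $A$ is skew-symmetric with respect to $B$, so $A\in\mathfrak{so}(V,B)$. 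A short check shows that the remaining derivation identities (on $[e_i,\widehat{e_i}]=z$ and on the vanishing even--odd brackets) impose no further constraint, so the only condition on the odd action is this skew-symmetry.

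Next I would record that the isomorphism type of $H_{n,m}^{u}$ depends on $u$ only through the pair $(\lambda,A)$ up to the natural equivalences: any isomorphism must send the derived subalgebra $[H_{n,m}^{u},H_{n,m}^{u}]=H_{n,m}$ to itself, hence restricts to an automorphism $\phi$ of $H_{n,m}$ and carries $u$ to $cu'+x$ with $c\in\mathbb C^{\times}$, $x\in H_{n,m}$. Since the even inner derivations $\operatorname{ad}(x)$ ($x$ even) map $H_{n,m}$ into $\mathbb C z$ and annihilate $V$, they do not affect $A$; thus on $V$ the relevant freedom is exactly conjugation of $A$ by the image of $\phi$, which lies in $O(V,B)$ once we normalise $\phi(z)=z$. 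Because the even part of $u$ already has the exact shape appearing in the definition of $H_{n,m}^{\lambda,\kappa}$, it suffices to take $\phi$ to be the identity on the even part and to choose its restriction $g\in O(V,B)$ so that $gAg^{-1}$ is in block form.

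The heart of the argument is therefore the orthogonal normal form of $A$. I would decompose $(V,B)$ into the $B$-orthogonal direct sum of the kernel $K=\ker A$ and the sum of the nonzero eigenspaces of $A$; over $\mathbb C$ the nonzero eigenvalues occur in pairs $\pm i\kappa_j$, the eigenspaces $V_{i\kappa_j}$ and $V_{-i\kappa_j}$ are $B$-isotropic and dually paired by $B$, and distinct such pairs are mutually $B$-orthogonal. On each $2$-dimensional piece I would pick $B$-dual eigenvectors $f,g$ and recombine them as $w_j=\tfrac{1}{\sqrt2}(f+g)$, $\widehat{w_j}=\tfrac{i}{\sqrt2}(f-g)$ into a $B$-orthonormal pair on which $A$ acts by $Aw_j=\kappa_j\widehat{w_j}$, $A\widehat{w_j}=-\kappa_j w_j$; on $K$, where $B$ restricts nondegenerately, I would choose a $B$-orthonormal basis $\eta_1,\dots,\eta_{m-2r}$. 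This produces a new homogeneous basis of $V$ realising precisely the odd relations $[w_j,w_j]=[\widehat{w_j},\widehat{w_j}]=[\eta_t,\eta_t]=z$ together with $[u,w_j]=\kappa_j\widehat{w_j}$, $[u,\widehat{w_j}]=-\kappa_j w_j$ and $u(\eta_t)=0$, with $\kappa_j\in\mathbb C^{\times}$ and $2r=\dim\bigoplus_j(V_{i\kappa_j}\oplus V_{-i\kappa_j})\le m$.

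To finish, I would package the above as the automorphism $\phi$ (identity on $z$ and on the even generators, the orthogonal change of basis on $V$) and check that $\phi$, together with $u\mapsto u$, intertwines the two bracket structures: the even brackets are untouched, and the odd and mixed brackets match by the computation just described, giving the isomorphism $H_{n,m}^{u}\cong H_{n,m}^{\lambda,\kappa}$. The main obstacle is the normal-form step over $\mathbb C$: one must verify that $B$ restricts nondegenerately to each eigenspace-pair and to $K$ so that $B$-orthonormal adapted bases exist. Here the diagonalisability of $A$ is essential—it is exactly what produces genuine rotation blocks (with $\kappa_j\ne 0$) rather than a nilpotent contribution, and it is the point at which the semisimple, ``twisted Heisenberg'' nature of $u$ enters.
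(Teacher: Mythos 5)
Your overall strategy coincides with the paper's: establish that $A:=u|_{V}$ is skew-symmetric with respect to the symmetric form $B$ defined by $[x,y]=B(x,y)z$ on the odd part, then bring $A$ to the block-rotation form by a change of basis compatible with the bracket structure. The paper does the second step by citing a canonical form for complex skew-symmetric matrices (\cite[Corollary 4.4.19]{Horn}) and then \cite[Theorem 13]{Rod}, whereas you carry it out by hand via the eigenspace decomposition of $A$ and an explicit recombination $w_j=\tfrac1{\sqrt2}(f+g)$, $\widehat{w_j}=\tfrac{\mathbf{i}}{\sqrt2}(f-g)$. Your reduction of the problem to conjugacy of $A$ under $O(V,B)$ (up to scalar) is correct, and you are in fact more careful than the paper about which equivalence is relevant: the change of basis must preserve the complex symmetric form $B$, i.e.\ be complex orthogonal, not unitary.

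Precisely because of that care, however, your argument has a genuine unfilled gap at the point you yourself flag: the decomposition of $V$ into $\ker A$ plus dually paired eigenspaces requires $A$ to be diagonalizable, and nothing in the hypotheses gives this. Eq.~(\ref{twisted deri}) constrains $u$ only on the even part; on the odd part the derivation identity forces only $A\in\mathfrak{so}(V,B)$, and for $m\geq 3$ this Lie algebra contains nonzero nilpotent elements (take $A=vw^{T}-wv^{T}$ with $v$ a $B$-isotropic vector, e.g.\ $v=(1,\mathbf{i},0)$, $w=(0,0,1)$ in $\mathbb{C}^{3}$; then $A^{2}\neq0=A^{3}$). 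For such $A$ there is no decomposition into rotation blocks in any basis, $B$-orthonormal or otherwise, and since (as your own reduction shows) the conjugacy class of $A$ up to scalar is an isomorphism invariant of $H_{n,m}^{u}$, the construction cannot be rescued by a cleverer isomorphism. Appealing to ``the semisimple, twisted Heisenberg nature of $u$'' is circular: semisimplicity of $u|_{V}$ is exactly what must be proved, or else added as a hypothesis. You should either prove diagonalizability from the stated assumptions (it does not hold in general) or state it as an additional assumption. It is worth noting that the paper's own proof does not visibly close this gap either: the cited Horn--Johnson result concerns \emph{unitary} congruence, which does not preserve the form $B$; the difficulty is absent in the real setting of \cite{Cal2}, where skew-symmetric matrices are automatically semisimple, but it is genuine over $\mathbb{C}$.
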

\begin{proof}
Recall that Eq. (\ref{heisenberg basis}) is a homogeneous basis of $H_{n,m}$. Suppose $u(w_{i})=\sum\limits_{j=1}^{m}a_{ji}w_{j},$ where all $a_{ji}\in \mathbb{C}.$   Since $u$ is a derivation, we have $a_{ij}=-a_{ji}$ for all $1\leq i,j\leq m$. Then the matrix $A=(a_{ij})$ of the restriction of $u$ to the odd part of $H_{n,m}$ with respect to the ordered basis $\{w_{1},\ldots,w_{m}\}$ is a skew symmetric matrix. By \cite[Corollary 4.4.19]{Horn}, $A$ is unitarily congruent to a matrix of the following form:
$$A'=\left(
  \begin{array}{cccccccc}
    0 & -\kappa_{1} &  &  &  &  &  &  \\
    \kappa_{1} & 0 &  &  &  &  &  &  \\
     &  & \ddots &  &  &  &  &  \\
     &  &  & 0 & -\kappa_{r} &  &  &  \\
     &  &  & \kappa_{r} & 0 &  &  &  \\
     &  &  &  &  & 0 &  &  \\
     &  &  &  &  &  & \ddots &  \\
     &  &  &  &  &  &  & 0 \\
  \end{array}
\right),$$where $\kappa_{i}\in \mathbb{C}, 1\leq i\leq r.$ By \cite[Theorem 13]{Rod}, we know that $H_{n,m}^{u}$ is isomorphic to $H_{n,m}^{\lambda,\kappa}$ for  $\kappa=(\kappa_{1},\ldots,\kappa_{r})\in (\mathbb{C^{\times}})^{r}$.
\end{proof}

\section{Fine grading on $H_{n,m}^{\lambda,\kappa}$}
\subsection{Torality and basic examples}\label{section first grading}
We are now dealing with two types of fine gradings on $H_{n,m}^{\lambda,\kappa}$, which will be relevant to our work.
For each $s$ such that $0\leq 2s\leq m-2r$, write
\begin{equation}\label{odd ker}
\begin{split}
&p_{i}:=\frac{1}{\sqrt{2}}(\eta_{2i-1}+\mathbf{i}\eta_{2i}), \\
   &q_{i}:=\frac{1}{\sqrt{2}}(\eta_{2i-1}-\mathbf{i}\eta_{2i}),\\
   &z_{j}:=\eta_{j+2s},
\end{split}
\end{equation}
where $i\leq s$, $j\leq m-2r-2s$ and $\mathbf{i}=\sqrt{-1}\in \mathbb{C}.$
Then
\begin{eqnarray*}
 [p_{i},q_{i}]=[z_{j},z_{j}]=z,
\end{eqnarray*}
for $i\leq s$ and $j\leq m-2r-2s.$
Obviously
\begin{equation}\label{second basis}
\{z,u,e_{1},\widehat{e_{1}},\ldots,e_{k},\widehat{e_{k}}\mid w_{1},\widehat{w_{1}},\ldots,w_{r},\widehat{w_{r}},p_{1},q_{1},\ldots,p_{s},q_{s},z_{1},\ldots,z_{m-2r-2s}\}
\end{equation} is a basis of $H_{n,m}^{\lambda,\kappa}$.
Then a fine grading on $H_{n,m}^{\lambda,\kappa}$ is obviously provided by our basis:
 for each $s$ such that $0\leq 2s\leq m-2r,$  we have a $G$-grading given by
\begin{eqnarray*}
  \Gamma_{1}^{s}: &&(H_{n,m}^{\lambda,\kappa})_{(2;\overline{2}; \overline{0},\ldots,\overline{0};\overline{0},\ldots,\overline{0};0,\ldots,0;\overline{0},\ldots,\overline{0})}=\langle z\rangle, \\
  &&(H_{n,m}^{\lambda,\kappa})_{(0;\overline{2};\overline{0},\ldots,\overline{0};\overline{0},\ldots,\overline{0};0,\ldots,0;\overline{0},\ldots,\overline{0})}=\langle u\rangle, \\
  &&(H_{n,m}^{\lambda,\kappa})_{(1;\overline{0};\overline{0},\ldots,\overline{1},\ldots,\overline{0};\overline{0},\ldots,\overline{0};0,\ldots,0;\overline{0},\ldots,\overline{0})}=\langle e_{i}\rangle ~(\overline{1}~~ \hbox{in the}~ i\hbox{-th slot}),\\
  &&(H_{n,m}^{\lambda,\kappa})_{(1;\overline{2};\overline{0},\ldots,\overline{1},\ldots,\overline{0};\overline{0},\ldots,\overline{0};0,\ldots,0;\overline{0},\ldots,\overline{0})}=\langle \widehat{e_{i}}\rangle, \\
  &&(H_{n,m}^{\lambda,\kappa})_{(1;\overline{1};\overline{0},\ldots,\overline{0};\overline{0},\ldots,\overline{1},\ldots,\overline{0};0,\ldots,0;\overline{0},\ldots,\overline{0})}=\langle w_{j}\rangle, \\
  &&(H_{n,m}^{\lambda,\kappa})_{(1;\overline{3};\overline{0},\ldots,\overline{0};\overline{0},\ldots,\overline{1},\ldots,\overline{0};0,\ldots,0;\overline{0},\ldots,\overline{0})}=\langle \widehat{w_{j}}\rangle, \\
  &&(H_{n,m}^{\lambda,\kappa})_{(1;\overline{1};\overline{0},\ldots,\overline{0};\overline{0},\ldots,\overline{0};0,\ldots,1,\ldots0;\overline{0},\ldots,\overline{0})}=\langle p_{t}\rangle, \\
  &&(H_{n,m}^{\lambda,\kappa})_{(1;\overline{1};\overline{0},\ldots,\overline{0};\overline{0},\ldots,\overline{0};0,\ldots,-1,\ldots,0;\overline{0},\ldots,\overline{0})}=\langle q_{t}\rangle, \\
  &&(H_{n,m}^{\lambda,\kappa})_{(1;\overline{1};\overline{0},\ldots,\overline{0};\overline{0},\ldots,\overline{0};0,\ldots,0;\overline{0},\ldots,\overline{1},\ldots,\overline{0})}=\langle z_{h}\rangle,
\end{eqnarray*}
where
$$
i\leq k,\ j\leq r,\ t\leq s,\  h\leq m-2r-2s
$$
and
$$
G=\mathbb{Z}\times \mathbb{Z}_{4}\times \mathbb{Z}_{2}^{k+r}\times \mathbb{Z}^{s}\times \mathbb{Z}_{2}^{m-2r-2s}.
$$
Clearly, the grading $\Gamma_{1}^{s}$ is not toral.  Let us
find a toral fine grading. Let
\begin{equation}\label{second grading}
\begin{split}
& u_{i}:=\frac{1}{\sqrt{2\mathbf{i}}}(e_{i}+\mathbf{i}\widehat{e_{i}}), \\
   & v_{i}:=\frac{1}{\sqrt{2\mathbf{i}}}(e_{i}-\mathbf{i}\widehat{e_{i}}), \\
   & f_{j}:=\frac{1}{\sqrt{2}}(w_{j}+\mathbf{i}\widehat{w_{j}}),\\
   & g_{j}:=\frac{1}{\sqrt{2}}(w_{j}-\mathbf{i}\widehat{w_{j}}),
\end{split}
\end{equation}
where $i\leq k$ and $j\leq r$.
Then
\begin{equation}\label{second bracket}
\begin{split}
   & [u,u_{i}]=-\mathbf{i}\lambda_{i}u_{i}, \\
   & [u,v_{i}]=\mathbf{i}\lambda_{i}v_{i},\\
   & [u_{i},v_{i}]=-\lambda_{i}z,\\
   & [u,f_{j}]=-\mathbf{i}\kappa_{i}f_{j}, \\
   & [u,g_{j}]=\mathbf{i}\kappa_{i}g_{j}, \\
   & [f_{j},g_{j}]=[p_{t},q_{t}]=[z_{h},z_{h}]=z,
 \end{split}
\end{equation}
where $ i\leq k$, $j\leq r$, $t\leq s$ and $h\leq m-2r-2s$. Obviously
\begin{equation}\label{third basis}
\{z,u,u_{1},v_{1},\ldots,u_{k},v_{k}\mid f_{1},g_{1},\ldots,f_{r},g_{r},p_{1},q_{1},\ldots,p_{s},q_{s},z_{1},\ldots,z_{m-2r-2s}\}
\end{equation}
is a basis of $H_{n,m}^{\lambda,\kappa}$.
For each $s$ such that $m-2r-2s\neq1,$  we have a  fine $\mathbb{Z}^{1+k+r+s}\times \mathbb{Z}_{2}^{m-2r-2s}$-grading
\begin{eqnarray*}
  \Gamma_{2}^{s}: &&(H_{n,m}^{\lambda,\kappa})_{(2;0,\ldots,0;0,\ldots,0;0,\ldots,0;\overline{0},\ldots,\overline{0})}=\langle z\rangle, \\
  &&(H_{n,m}^{\lambda,\kappa})_{(0;0,\ldots,0;0,\ldots,0;0,\ldots,0;\overline{0},\ldots,\overline{0})}=\langle u\rangle, \\
  &&(H_{n,m}^{\lambda,\kappa})_{(1;0,\ldots,1,\ldots,0;0,\ldots,0;0,\ldots,0;\overline{0},\ldots,\overline{0})}=\langle u_{i}\rangle\ (1~ \hbox{in the}~ i\hbox{-th slot}),\\
  &&(H_{n,m}^{\lambda,\kappa})_{(1;0,\ldots,-1,\ldots,0;0,\ldots,0;0,\ldots,0;\overline{0},\ldots,\overline{0})}=\langle v_{i}\rangle, \\
  &&(H_{n,m}^{\lambda,\kappa})_{(1;0,\ldots,0;0,\ldots,1,\ldots,0;0,\ldots,0;\overline{0},\ldots,\overline{0})}=\langle f_{j}\rangle, \\
  &&(H_{n,m}^{\lambda,\kappa})_{(1;0,\ldots,0;0,\ldots,-1,\ldots,0;0,\ldots,0;\overline{0},\ldots,\overline{0})}=\langle g_{j}\rangle, \\
 &&(H_{n,m}^{\lambda,\kappa})_{(1;0,\ldots,0;0,\ldots,0;0,\ldots,1,\ldots,0;\overline{0},\ldots,\overline{0})}=\langle p_{t}\rangle, \\
  &&(H_{n,m}^{\lambda,\kappa})_{(1;0,\ldots,0;0,\ldots,0;0,\ldots,-1,\ldots,0;\overline{0},\ldots,\overline{0})}=\langle q_{t}\rangle, \\
  &&(H_{n,m}^{\lambda,\kappa})_{(1;0,\ldots,0;0,\ldots,0;0,\ldots,0;\overline{0},\ldots,\overline{1},\ldots,\overline{0})}=\langle z_{h}\rangle,
\end{eqnarray*}
where
$$
i\leq k,\ j\leq r,\ t\leq s,\  h\leq m-2r-2s.
$$
If $m$ is even, then $\Gamma_{2}^{\frac{m}{2}-r}$ is a toral fine grading. If $m$ is odd, for $s=\frac{m-1}{2}-r$, then we have a  toral fine $\mathbb{Z}^{1+k+r+s} $-grading
\begin{eqnarray*}
  \Gamma_{2}^{\frac{m-1}{2}-r}: &&(H_{n,m}^{\lambda,\kappa})_{(2;0,\ldots,0;0,\ldots,0;0,\ldots,0)}=\langle z\rangle, \\
  &&(H_{n,m}^{\lambda,\kappa})_{(0;0,\ldots,0;0,\ldots,0;0,\ldots,0)}=\langle u\rangle, \\
  &&(H_{n,m}^{\lambda,\kappa})_{(1;0,\ldots,1,\ldots,0;0,\ldots,0;0,\ldots,0)}=\langle u_{i}\rangle (1~ \hbox{in the}~ i\hbox{-th slot}),\\
  &&(H_{n,m}^{\lambda,\kappa})_{(1;0,\ldots,-1,\ldots,0;0,\ldots,0;0,\ldots,0)}=\langle v_{i}\rangle, \\
  &&(H_{n,m}^{\lambda,\kappa})_{(1;0,\ldots,0;0,\ldots,1,\ldots,0;0,\ldots,0)}=\langle f_{j}\rangle, \\
  &&(H_{n,m}^{\lambda,\kappa})_{(1;0,\ldots,0;0,\ldots,-1,\ldots,0;0,\ldots,0)}=\langle g_{j}\rangle, \\
 &&(H_{n,m}^{\lambda,\kappa})_{(1;0,\ldots,0;0,\ldots,0;0,\ldots,1,\ldots,0)}=\langle p_{t}\rangle, \\
  &&(H_{n,m}^{\lambda,\kappa})_{(1;0,\ldots,0;0,\ldots,0;0,\ldots,-1,\ldots,0)}=\langle q_{t}\rangle, \\
  &&(H_{n,m}^{\lambda,\kappa})_{(1;0,\ldots,0;0,\ldots,0;0,\ldots,0)}=\langle z_{1}\rangle.
\end{eqnarray*}
where
$i\leq k,\ j\leq r,\ t\leq s.$
\begin{lemma}\label{homo basis} For each group grading on $H_{n,m}^{\lambda,\kappa}$, there is a basis $$\{z,u',u_{1}',v_{1}',\ldots,u_{k}',v_{k}'\mid f_{1}',g_{1}',\ldots,
f_{r}',g_{r}',p_{1}',q_{1}',\ldots,
p_{s}',q_{s}',z_{1}',\ldots,z_{m-2r-2s}'\}$$ of $H_{n,m}^{\lambda,\kappa}$ with $u'$ a  homogeneous element of the grading  such that the only nonzero brackets are given by
\begin{eqnarray*}
   && [u',u_{i}']=-\mathbf{i}\lambda_{i}u_{i}', \\
   && [u',v_{i}']=\mathbf{i}\lambda_{i}v_{i}', \\
   && [u_{i}',v_{i}']=-\lambda_{i}z,\\
   && [u',f_{j}']=-\mathbf{i}\kappa_{i}f_{j}',\\
   && [u',g_{j}']=\mathbf{i}\kappa_{i}g_{j}', \\
   && [f_{j}',g_{j}']=[p_{t}',q_{t}']=[z_{h}',z_{h}']=z,
\end{eqnarray*}
where $i\leq k$, $j\leq r$, $t\leq s$ and $h\leq m-2r-2s$.
\end{lemma}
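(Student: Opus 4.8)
The plan is to isolate a homogeneous replacement $u'$ for the derivation $u$ and then to recover the whole standard basis from $u'$ by corrections along the centre, exploiting that the statement requires only $u'$, and not the entire basis, to be homogeneous. The starting point is that $z$ spans the centre of $H_{n,m}^{\lambda,\kappa}$ and that, reading off (\ref{second bracket}), the derived superalgebra is $[H_{n,m}^{\lambda,\kappa},H_{n,m}^{\lambda,\kappa}]=\langle z,u_{i},v_{i},f_{j},g_{j}\rangle$, whose even part is $\langle z,u_{i},v_{i}\rangle$. Since the derived superalgebra is a graded ideal for every group grading, and since the even part $H_{\bar 0}=\langle z,u,u_{i},v_{i}\rangle$ is a graded subspace, the quotient $(H_{n,m}^{\lambda,\kappa}/[H_{n,m}^{\lambda,\kappa},H_{n,m}^{\lambda,\kappa}])_{\bar 0}$ is a one-dimensional graded space spanned by the class of $u$. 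Hence that class is homogeneous; lifting it to an even homogeneous element and rescaling, I obtain a homogeneous $u'=u+c$ with $c\in\langle z,u_{i},v_{i}\rangle$.

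With such a $u'$ in hand the remaining task is grading-free and uses only that $z$ is central. Writing $c=\alpha z+\sum_{l}\beta_{l}u_{l}+\sum_{l}\gamma_{l}v_{l}$, I would compute $\mathrm{ad}\,u'=\mathrm{ad}\,u+\mathrm{ad}\,c$ on the basis (\ref{third basis}). As $c\in\langle z,u_{i},v_{i}\rangle$, the operator $\mathrm{ad}\,c$ sends $u_{i},v_{i}$ into $\langle z\rangle$ and annihilates each of $z,f_{j},g_{j},p_{t},q_{t},z_{h}$; consequently $f_{j},g_{j},p_{t},q_{t},z_{h}$ are already eigenvectors of $\mathrm{ad}\,u'$ with the prescribed eigenvalues $-\mathbf{i}\kappa_{j},\mathbf{i}\kappa_{j},0,0,0$. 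For the two remaining families a single central correction suffices: a short computation shows that $u_{i}'=u_{i}+\mathbf{i}\gamma_{i}z$ and $v_{i}'=v_{i}+\mathbf{i}\beta_{i}z$ satisfy $[u',u_{i}']=-\mathbf{i}\lambda_{i}u_{i}'$ and $[u',v_{i}']=\mathbf{i}\lambda_{i}v_{i}'$, where the non-vanishing of $\lambda_{i}$ is exactly what makes these corrections exist. Because every correction is a multiple of the central $z$, all brackets among $u_{i}',v_{i}',f_{j},g_{j},p_{t},q_{t},z_{h}$ coincide with the corresponding brackets among the unprimed vectors; in particular $[u_{i}',v_{i}']=-\lambda_{i}z$, while $[f_{j},g_{j}]=[p_{t},q_{t}]=[z_{h},z_{h}]=z$ and all other brackets vanish. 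Ordering the basis with $z$ first, then the $u_{i},v_{i}$, then $u$, the passage from (\ref{third basis}) to $\{z,u',u_{i}',v_{i}',f_{j},g_{j},p_{t},q_{t},z_{h}\}$ is given by a lower-triangular unipotent matrix, so the new family is again a basis; renaming $f_{j},g_{j},p_{t},q_{t},z_{h}$ as $f_{j}',g_{j}',p_{t}',q_{t}',z_{h}'$ produces exactly the asserted relations.

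I expect the only genuine input to be the first paragraph: the grading enters solely through the observation that the even part of $H_{n,m}^{\lambda,\kappa}/[H_{n,m}^{\lambda,\kappa},H_{n,m}^{\lambda,\kappa}]$ is one-dimensional, which forces a homogeneous even lift $u'$ to exist with $u'-u\in\langle z,u_{i},v_{i}\rangle$. The care needed there is to take the lift simultaneously homogeneous \emph{and} even, so that the correction term $c$ lands in the even part of the derived superalgebra and the subsequent central shifts close up with precisely the stated structure constants. Everything after that is the normalization along the centre described above, and since the lemma never asks the vectors $u_{i}',v_{i}',\dots$ to be homogeneous, these shifts are entirely unconstrained by the grading.
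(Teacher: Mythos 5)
Your proof is correct and follows essentially the same route as the paper: both arguments produce a homogeneous even $u'$ with $u'-u$ in $[L_{\bar 0},L_{\bar 0}]=\langle z,u_i,v_i\rangle$ (you via the one-dimensional even part of the graded quotient $L/[L,L]$, the paper by picking a homogeneous element of $L_{\bar 0}$ outside the graded subspace $(H_{n,m})_{\bar 0}$ and rescaling), and then perform the identical central corrections $u_i\mapsto u_i+\mathbf{i}\gamma_i z$, $v_i\mapsto v_i+\mathbf{i}\beta_i z$ while leaving the odd basis vectors untouched.
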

\begin{proof}
Let $\Gamma:L=\bigoplus_{g\in G}L_{g}$ be a group grading on $L=H_{n,m}^{\lambda,\kappa}$ and  $L_{\overline{0}}$ denote the even part of $L$. Since any automorphism leaves invariant  $[L_{\overline{0}},L_{\overline{0}}]=(H_{n,m})_{\overline{0}}$ and $\langle z\rangle$,
one sees that $z$ is homogeneous and $(H_{n,m})_{\overline{0}}$ is a  $G$-graded subspace.

Since not every homogeneous element of $L_{\overline{0}}$ is contained in $(H_{n,m})_{\overline{0}}$, we can take a homogeneous element $a=\eta u+h\in L_{\overline{0}}$ with $\eta$ a nonzero scalar and $h\in (H_{n,m})_{\overline{0}}$. Then $u':=\eta^{-1}a$ is a homogeneous element and $u'-u\in (H_{n,m})_{\overline{0}}$. Recall that Eq. (\ref{third basis}) is a basis of $L$. Then $$u'=u+\alpha z+\sum\limits_{i=1}^{k}\alpha_{i}u_{i}+\sum\limits_{i=1}^{k}\beta_{i}v_{i}$$ for some  scalars $\alpha,\alpha_{i},\beta_{i}\in \mathbb{C}.$ Take $u_{i}'=u_{i}+\mathbf{i}\beta_{i}z$, $v_{i}'=v_{i}+\mathbf{i}\alpha_{i}z$, $f_{i}'=f_{i}$, $g_{i}'=g_{i}$, $p_{i}'=p_{i}$, $q_{i}'=q_{i}$, $z_{i}'=z_{i}$. Then the basis $$\{z,u',u_{1}',v_{1}',\ldots,u_{k}',v_{k}'\mid f_{1}',g_{1}',\ldots,
f_{r}',g_{r}',p_{1}',q_{1}',\ldots,p_{s}',q_{s}',z_{1}',\ldots,z_{m-2r-2s}'\}$$ satisfies the required conditions.
\end{proof}
 Lemma \ref{homo basis} tells us a useful fact: for any group grading $\Gamma:L=\bigoplus_{g\in G}L_{g}$  on $L=H_{n,m}^{\lambda,\kappa}$, the element $u$ in Eq. (\ref{third basis}) can always be assumed to be a homogeneous element of the grading.
  Let us denote by $\mathrm{deg}u=h\in G$ the degree of $u$ in $\Gamma$. Now we will show that $h$ is necessarily of finite order. Denote $$\varphi:=\mathrm{ad}u:L\longrightarrow L,~~~~~x\mapsto[u,x].$$
If $0\neq x\in [u,L]$ is a homogeneous element, then there is $g\in G$ such that $$x=\sum\limits_{i=1}^{k}(c_{i}u_{i}+d_{i}v_{i})+\sum\limits_{i=1}^{r}(c_{i}'f_{i}+d_{i}'g_{i})\in L_{g}$$ for some scalars $c_{i},d_{i},c_{i}',d_{i}'\in \mathbb{C}.$ Then $$\varphi^{t}(x)=\sum_{i=1}^{k}((-1)^{t}c_{i}u_{i}+d_{i}v_{i})(\mathbf{i}\lambda_{i})^{t}+\sum_{i=1}^{r}((-1)^{t}c_{i}'f_{i}+d_{i}'g_{i})(\mathbf{i}\kappa_{i})^{t}\in L_{g+th}$$
is not zero for all $t\in \mathbb{N}$. Since there are not more than $2k+2r$ linearly independent elements  in the set\begin{eqnarray*}
                                                                                                                           &&\left\{\sum_{i=1}^{k}((-1)^{t}c_{i}u_{i}+d_{i}v_{i})(\mathbf{i}\lambda_{i})^{t}+\sum_{i=1}^{r}((-1)^{t}c_{i}'f_{i}+d_{i}'g_{i})(\mathbf{i}\kappa_{i})^{t}\mid t\in \mathbb{Z}_{\geq0}\right\} \\
                                                                                                                           &&\subset \langle\{u_{1},v_{1},\ldots,u_{k},v_{k}\mid f_{1},g_{1},\ldots,f_{r},g_{r}\}\rangle,
                                                                                                                       \end{eqnarray*}
 there is a positive integer $p\leq 2k+2r+1$ with $\varphi^{p}(x)\in \langle\{\varphi^{t}(x)\mid 0\leq t< p\}\rangle.$
Since $\varphi^{p}(x)\in L_{g+ph}\cap(\sum_{t<p}L_{g+th})$, there exists $t<p$ such that $g+ph=g+th.$ Hence $(p-t)h=0$, as desired.

Let us denote by $l$ the order of $h$ in $G$. By Eq. (\ref{second bracket}), we know the set of eigenvalues of $\varphi$ is $$\{-\mathbf{i}\lambda_{1},\mathbf{i}\lambda_{1},\ldots,-\mathbf{i}\lambda_{k},\mathbf{i}\lambda_{k},-\mathbf{i}\kappa_{1},\mathbf{i}\kappa_{1},\ldots,
-\mathbf{i}\kappa_{r},\mathbf{i}\kappa_{r},0,\ldots,0\}$$ with respective eigenvectors$$\{u_{1},v_{1},\ldots,u_{k},v_{k}, f_{1},g_{1},\ldots,f_{r},g_{r},u,z,p_{1},q_{1},\ldots,p_{s},q_{s},z_{1},\ldots,z_{m-2r-2s}\}.$$  Then the set of eigenvalues of $\varphi|_{[u,L]}$ is $$\{-\mathbf{i}\lambda_{1},\mathbf{i}\lambda_{1},\ldots,-\mathbf{i}\lambda_{k},\mathbf{i}\lambda_{k},-\mathbf{i}\kappa_{1},\mathbf{i}\kappa_{1},\ldots,
-\mathbf{i}\kappa_{r},\mathbf{i}\kappa_{r}\}=:\mathrm{Spec}(u).$$
Set $$\mu_{i}:=\left\{
                \begin{array}{ll}
                  -\mathbf{i}\lambda_{i}, & 1\leq i\leq k \\
                  -\mathbf{i}\kappa_{i-k}, & k+1\leq i\leq k+r.
                \end{array}
              \right.
$$Then $$\mathrm{Spec}(u)=\{\mu_{1},-\mu_{1},\ldots,\mu_{k+r},-\mu_{k+r}\}.$$
Fix  $\mu_{i}\in \mathrm{Spec}(u)$, consider the eigenspace of $\varphi$ given by $V_{\mu_{i}}:=\{x\in L\mid \varphi(x)=\mu_{i} x\}$ and denote by $V_{\mu_{i}}^{l}:=\{x\in L\mid\varphi^{l}(x)=\mu_{i}^{l}x\}.$ It is obviously that $V_{\mu_{i}}\subset V_{\mu_{i}}^{l}$ is nonzero. Moreover, as $V_{\mu_{i}}^{l}$ is invariant under $\varphi$, one sees that $\varphi|_{V_{\mu_{i}}^{l}}$ is diagonalizable and
\begin{equation}\label{eigenspace}
  V_{\mu_{i}}^{l}=\bigoplus_{j=0}^{l-1}V_{\xi^{j}\mu_{i}}
\end{equation}
for $\xi$ a fixed primitive $l$th root of  unit. Note that if $x\in V_{\mu_{i}}^{l}$, then $$\sum\limits_{q=0}^{l-1}(\xi^{l-j}\mu_{i}^{-1})^{q}\varphi^{q}(x)\in V_{\xi^{j}\mu_{i}}$$ for any $j=0,\ldots,l-1$.

Recall that if $f\in \mathrm{End}(L)$ satisfies $f(L_{g})\subset L_{g}$ for all $g\in G$, then for each $\alpha\in \mathbb{C}$, the eigenspace $V_{\alpha}=\{x\in L\mid f(x)=\alpha x\}$ is $G$-graded. Since $\varphi^{l}(L_{g})\subset L_{g+lh}=L_{g}$,  the eigenspace  $V_{\mu_{i}}^{l}$ of $\varphi^{l}$ is a $G$-graded subspace of $L$. Thus we can take $0\neq x\in V_{\mu_{i}}^{l}\cap L_{g}$ for some $g\in G.$ For each $j=0,\ldots,l-1$, the element
$$\sum\limits_{q=0}^{l-1}(\xi^{l-j}\mu_{i}^{-1})^{q}\varphi^{q}(x)\in \sum_{q=0}^{l-1}L_{g+qh}$$ must be nonzero. Consequently, $V_{\xi^{j}\mu_{i}}\neq0$ for all $j$. Then $$\{\xi^{j}\mu_{i}\mid j=0,\ldots,l-1\}\subset \mathrm{Spec}(u)$$ for any $\mu_{i}\in \mathrm{Spec}(u)$. Hence
\begin{equation}\label{spec}
  \mathrm{Spec}(u)=\{\pm\xi^{j}\mu_{i}\mid 0\leq j<l,1\leq i\leq k+r\}.
\end{equation}

As mentioned above, our aim is to classify all the fine gradings up to equivalence and characterize their Weyl groups for twisted Heisenberg superalgebras.  Now we are in position to do that in a special case, while the general case is left to  Sections \ref{general case} and \ref{sec.4.3}.

\begin{theorem}\label{classification1}
Suppose that $\frac{\lambda_{i}}{\lambda_{j}}$ and $\frac{\kappa_{i}}{\kappa_{j}}$ are not  roots of unit for all $i\neq j$.

$(\mathrm{1})$ Up to equivalence, there are $m-2r+2$ fine gradings on $H_{n,m}^{\lambda,\kappa}$ if $m$ is even and $m-2r+1$ in case $m$ is odd, namely, $$\{\Gamma_{1}^{s},\Gamma_{2}^{s}:0\leq2s\leq m-2r\},$$ and only $\Gamma_{2}^{\frac{m}{2}-r}$($m$ is even) and $\Gamma_{2}^{\frac{m-1}{2}-r}$($m$ is odd) are toral.

$(\mathrm{2})$ The Weyl groups of these fine gradings are $$\mathcal{W}(\Gamma_{1}^{s})\cong\mathbb{Z}_{2}^{k+r+s}\rtimes(S_{s}\times S_{m-2r-2s})$$ and $$\mathcal{W}(\Gamma_{2}^{s})\cong \mathbb{Z}_{2}^{1+s}\rtimes(S_{s}\times S_{m-2r-2s}).$$
\end{theorem}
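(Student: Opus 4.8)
The plan is to reduce an arbitrary fine grading to one of the two model families by analysing the homogeneous element $u$. By Lemma \ref{homo basis} I may assume $u$ is homogeneous; write $h=\deg u$ and let $l$ be its (finite) order, as in the discussion preceding Eq. (\ref{spec}). The decisive first step is to show $l\le 2$. From Eq. (\ref{spec}), multiplication by a primitive $l$th root of unity $\xi$ permutes $\mathrm{Spec}(u)$ in orbits of size exactly $l$. The point I would stress is a parity refinement: since $\varphi=\mathrm{ad}\,u$ is an even operator, each graded eigenspace $V_{\mu_i}^{l}$, and hence each $V_{\xi^{j}\mu_i}$, is homogeneous for the $\mathbb{Z}_2$-grading, so the $\xi$-orbit of an even eigenvalue stays inside the even part $\{\pm\mathbf{i}\lambda_1,\ldots,\pm\mathbf{i}\lambda_k\}$ of the spectrum and that of an odd eigenvalue inside $\{\pm\mathbf{i}\kappa_1,\ldots,\pm\mathbf{i}\kappa_r\}$. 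The hypothesis that $\lambda_i/\lambda_j$ and $\kappa_i/\kappa_j$ are never roots of unity then forces $\xi\mu_i=\pm\mu_i$, i.e. $\xi=\pm 1$ and $l\mid 2$. (When both blocks are empty the twisting is trivial and the statement reduces to the Heisenberg case of \cite{Cal2}.)

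Each value of $l$ produces one family. If $l=1$ then $u\in L_0$, so $\varphi$ preserves every homogeneous component and is diagonalizable; by the ratio hypothesis its nonzero eigenspaces are the lines $\langle u_i\rangle,\langle v_i\rangle,\langle f_j\rangle,\langle g_j\rangle$, which are therefore homogeneous, and the grading restricts to a fine grading of the anisotropic odd subspace $P=\langle\eta_1,\ldots,\eta_{m-2r}\rangle$ carrying the $\langle z\rangle$-valued symmetric form $[\cdot,\cdot]$. Classifying fine gradings of $P$ amounts to choosing how many hyperbolic pairs $(p_t,q_t)$ one splits off, a number $s$ with $0\le 2s\le m-2r$; this recovers exactly $\Gamma_{1}^{s}$ in its $l=1$ incarnation, namely $\Gamma_{2}^{s}$. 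If $l=2$, then $\deg u\ne 0$ prevents the eigenvectors $u_i$ from being homogeneous; instead $\varphi^{2}$ preserves the grading, each two-dimensional space $\langle e_i,\widehat{e_i}\rangle$ (resp. $\langle w_j,\widehat{w_j}\rangle$) splits into two lines interchanged by $\varphi$, and normalizing recovers the real brackets of the definition, i.e. $\Gamma_{1}^{s}$, with $P$ treated as before. In either case all homogeneous components are one-dimensional, so these gradings are automatically fine.

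The enumeration is then combinatorial. For $m$ even the parameter $s$ runs over $0,\ldots,\tfrac m2-r$ in each family, giving $m-2r+2$ gradings; for $m$ odd it runs over $0,\ldots,\tfrac{m-1}2-r$, the value with $m-2r-2s=1$ requiring the separate toral description, for a total of $m-2r+1$. Torality I would settle with the stated criterion that a fine grading is toral iff its universal grading group is torsion-free: the groups of $\Gamma_{1}^{s}$ always carry torsion, while the group of $\Gamma_{2}^{s}$ is torsion-free precisely for the two top values singled out. Pairwise inequivalence follows from two invariants: since $[H_{n,m}^{\lambda,\kappa},H_{n,m}^{\lambda,\kappa}]=H_{n,m}$, the class of $u$ spans $L/[L,L]$, so the order of $\deg u$ is an equivalence invariant and separates the families ($2$ for $\Gamma_{1}^{s}$, $1$ for $\Gamma_{2}^{s}$); and the number of odd homogeneous components $X$ with $[X,X]\ne 0$ separates different values of $s$ within each family.

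For the Weyl groups I would argue that any self-equivalence permutes the one-dimensional components while preserving parity and brackets, and that the ratio hypothesis makes the eigenvalue data of $\varphi$ rigid, so the only admissible permutations are: the interchange of each conjugate pair, realized globally by $u\mapsto -u$ in $\Gamma_{2}^{s}$ (one $\mathbb{Z}_2$) but individually by the rotations $e_i\mapsto\widehat{e_i}\mapsto -e_i$ and $w_j\mapsto\widehat{w_j}\mapsto -w_j$ in $\Gamma_{1}^{s}$ (giving $\mathbb{Z}_2^{k+r}$); the interchange $p_t\leftrightarrow q_t$ in each of the $s$ hyperbolic pairs ($\mathbb{Z}_2^{s}$); and the permutations $S_s$ of the hyperbolic pairs and $S_{m-2r-2s}$ of the anisotropic $z_h$. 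Since $S_s$ permutes the corresponding pair-swap generators while fixing the remaining $\mathbb{Z}_2$-factors, the product is the stated semidirect product. The main obstacle, and the bulk of the work, is exactly this last verification: writing down explicit automorphisms that realize each generator (with the scalar adjustments forced by the brackets, including the induced action on $z$) and, conversely, using rigidity and parity to prove that no further component-permutation can be induced by an automorphism.
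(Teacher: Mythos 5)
Your proposal follows essentially the same route as the paper: homogenize $u$ via Lemma \ref{homo basis}, use $\mathrm{Spec}(u)$ and the non-root-of-unity hypothesis to force $\deg u$ to have order $1$ or $2$, reduce the two cases to coarsenings of $\Gamma_{2}^{s}$ and $\Gamma_{1}^{s}$ respectively by building homogeneous hyperbolic pairs plus a block from $\ker(\mathrm{ad}\,u)\cap L_{\overline{1}}$, and generate the Weyl groups by exactly the rotations, pair swaps and block permutations the paper uses. Your parity refinement in the $l\le 2$ step (keeping the $\xi$-orbit of an even eigenvalue inside $\{\pm\mathbf{i}\lambda_{i}\}$) and your explicit invariants for pairwise inequivalence are welcome additions to points the paper leaves implicit, while the verifications you defer (explicit automorphisms and the rigidity argument killing nontrivial index permutations) are precisely what the paper carries out in detail and agree with your plan.
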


\begin{proof}
(1) Let $\Gamma:L=\bigoplus_{g\in G}L_{g}$ be a group grading on $L=H_{n,m}^{\lambda,\kappa}$. By Lemma \ref{homo basis}, we can assume that $u$ is homogeneous with  $\mathrm{deg}u=h\in G$ of finite order $l$. Now we will show that $l=1$ or $l=2$. Otherwise, take $\xi$ a primitive $l$th root of unit. As $\xi\mu_{1}\neq\pm\mu_{1}$, by Eq. (\ref{spec}), there is  $1\neq i\leq k$ such that $\xi\mu_{1}\in \pm\mu_{i}$. Then either $(\frac{\mu_{i}}{\mu_{1}})^{l}=1$ or $(\frac{\mu_{i}}{\mu_{1}})^{2l}=1$, this is a contradiction. Hence, we distinguish two cases.

(i) First consider $l=1$, that is $h=0\in G.$ Then $\varphi(L_{g})\subset L_{g}$ for all $g\in G.$ So we can take a basis of homogeneous elements which are eigenvectors for $\varphi$.   Recall that the eigenvalues of $\varphi|_{[u,L]}$ consist of $\{\pm\mu_{1},\ldots,\pm\mu_{k+r}\}.$
Let $L_{\overline{0}}$ and $L_{\overline{1}}$ denote the even part and the odd part of $L$ respectively. Take a homogeneous element $x_{1}\neq0$ in $V_{\mu_{1}}\cap L_{\overline{0}}.$ As $[x_{1},V_{-\mu_{1}}\cap L_{\overline{0}}]\neq0$, there is some element $y_{1}\in V_{-\mu_{1}}\cap L_{\overline{0}}$  such that $[x_{1},y_{1}]=-\lambda_{1}z.$ Now $$[u,L]=W\oplus \mathrm{Z}_{[u,L]}(W)$$ for $W:=\langle x_{1},y_{1}\rangle$,
where $W$ and its centralizer $\mathrm{Z}_{[u,L]}(W)$ are $G$-graded and $\varphi$-invariant. We continue by induction until finding a homogeneous basis $$\{x_{1},y_{1},\ldots,x_{k},y_{k}\mid\omega_{1},\nu_{1},\ldots,\omega_{r},\nu_{r}\}$$ of $[u,L]$ such that $[x_{i},y_{i}]=-\lambda_{i}z$, $[u,x_{i}]=\mu_{i}x_{i}=-\mathbf{i}\lambda_{i}x_{i}$, $[u,y_{i}]=-\mu_{i}y_{i}=\mathbf{i}\lambda_{i}y_{i}$ for $ i\leq k$ and $[\omega_{i},\nu_{i}]=z$, $[u,\omega_{i}]=-\mathbf{i}\kappa_{i}\omega_{i}$, $[u,\nu_{i}]=\mathbf{i}\kappa_{i}y_{i}$ for $i\leq r$. Note that $$L=\langle z\rangle\oplus\langle u\rangle\oplus[u,L]\oplus V,$$ where $V:=\ker \varphi\cap L_{\overline{1}}.$ Then $V$ is a $m-2r$-dimensional linear space with a symmetric nondegenerate bilinear form $\langle\cdot,\cdot\rangle: V\times V\rightarrow \mathbb{C}$ such that $[x,y]=\langle x,y\rangle z$ for all $x,y\in V.$ Since $V$ is a $G$-graded subspace, we can assume that $V=\bigoplus_{g\in G} V_{g}$, where $V_{g}:=V\cap L_{g}$.
By \cite[Lemma 2]{Cal2}, there is a  basis
$B=\{\gamma_{1},\rho_{1},\ldots,\gamma_{s},\rho_{s},\delta_{1},\ldots,\delta_{m-2r-2s}\}$ of $V$ such that

 \begin{itemize}
   \item $B\subset\bigcup_{g}V_{g};$~~~~~~~~~~~~~~~~~~~~~~~~~~~~~~~~~~~~~~~~~~~~~~~~~~~~~~~~~~~~~~~~~~~~~~~~~~~~~~~~~~~~~~~~~~~~~~~~~~$(*)$
   \item $\langle\gamma_{i},\rho_{i}\rangle=1,~~\langle\delta_{i},\delta_{i}\rangle=1;$
   \item $\hbox{any other inner product of elements in}$ $B$ $\hbox{is zero.}$
 \end{itemize}

Then   the map $u\mapsto u$, $z\mapsto z$, $x_{i}\mapsto u_{i}$, $y_{i}\mapsto v_{i}$, $\omega_{i}\mapsto f_{i}$, $\nu_{i}\mapsto g_{i}$, $\gamma_{i}\mapsto p_{i}$, $\rho_{i}\mapsto q_{i}$ and $\delta_{i}\mapsto z_{i}$ extends to a Lie superalgebra isomorphism which applies $\Gamma$ into a coarsening of
$\Gamma_{2}^{s}$.

(ii) Second consider the case $l=2$, that is, $2h=0$ but $h\neq0$. Then $\varphi^{2}$ preserves the grading $\Gamma$ and it is diagonalizable with eigenvalues $$\{\mu_{1}^{2},\ldots,\mu_{k}^{2},\mu_{k+1}^{2},\ldots,\mu_{k+r}^{2},0,\ldots,0\}.$$ It is clear that $\varphi$ applies $$V_{\mu_{i}}^{2}=\{x\in L\mid \varphi^{2}(x)=\mu_{i}^{2}x\}$$ into itself. Since $\varphi^{2}$ preserves the grading, we have $V_{\mu_{i}}^{2}$ is $G$-graded for each $i$. For any $0\neq x_{1}\in V_{\mu_{1}}^{2}\cap L_{g}\cap L_{\overline{0}}$ a homogeneous element of the grading, $\varphi(x_{1})$ is independent with $x_{1}.$ Take $y_{1}=\frac{1}{\lambda_{1}}\varphi(x_{1}).$ Then $\varphi(y_{1})=-\lambda_{1}x_{1}$.  Since $\lambda_{i}^{2}\neq \lambda_{j}^{2}$ for all  $i\neq j$, we have $\dim V_{\mu_{i}}^{2}\cap L_{\overline{0}}=2$ for all $ i\leq k$. So we have $[x_{1},y_{1}]\neq0$ and $V_{\mu_{1}}^{2}\cap L_{\overline{0}}=\langle x_{1},y_{1}\rangle$. Since $\mathbb{C}$ is algebraically closed, we can scale to get $[x_{1},y_{1}]=\lambda_{1}z$.  Then $$[u,L_{\overline{0}}]=W\oplus \mathrm{Z}_{[u,L_{\overline{0}}]}(W)$$ for $W:=\langle x_{1},y_{1}\rangle$, where  $W$ and $\mathrm{Z}_{[u,L_{\overline{0}}]}(W)$ are $G$-graded and $\varphi$-invariant. We continue  this process on $\mathrm{Z}_{[u,L_{\overline{0}}]}(W)$ until finding a homogeneous basis $$\{x_{1},y_{1},\ldots,x_{k},y_{k}\}$$ of $[u,L_{\overline{0}}]$ such that $[x_{i},y_{i}]=\lambda_{i}z$, $[u,x_{i}]=\lambda_{i}y_{i}$, $[u,y_{i}]=-\lambda_{i}x_{i}$.

For any $0\neq \omega_{1}\in V_{\mu_{k+1}}^{2} \cap L_{g} \cap L_{\overline{1}}$, $\varphi(\omega_{1})$ is independent with $\omega_{1}.$ Take $\nu_{1}=\frac{1}{\kappa_{1}}\varphi(\omega_{1}).$ Then $\varphi(\nu_{1})=-\kappa_{1}\omega_{1}$. Since $\frac{\kappa_{i}}{\kappa_{j}}$ is not a root of unit for all $i\neq j$, we have $\dim V_{\mu_{k+1}}^{2}\cap L_{\overline{1}}=2$  and $$V_{\mu_{k+1}}^{2}\cap L_{\overline{1}}=\langle \omega_{1},\nu_{1}\rangle.$$ As before, we have $[\omega_{1},\omega_{1}]\neq 0$ and $[\nu_{1},\nu_{1}]\neq0$. Hence we can scale to get $[\omega_{1},\omega_{1}]=z$ and $[\nu_{1},\nu_{1}]=z$. Then  $$[u,L_{\overline{1}}]=W'\oplus \mathrm{Z}_{[u,L_{\overline{1}}]}(W')$$ for $W':=\langle \omega_{1},\nu_{1}\rangle$, where both $W'$ and $\mathrm{Z}_{[u,L_{\overline{1}}]}(W')$ are $G$-graded and $\varphi$-invariant. We continue  this process on $\mathrm{Z}_{[u,L_{\overline{1}}]}(W')$ until finding a basis of homogeneous elements $$\{\omega_{1},\nu_{1},\ldots,\omega_{k},\nu_{k}\}$$ of $[u,L_{\overline{1}}]$ such that
 $[u,\omega_{i}]=\kappa_{i}\nu_{i}$, $[u,\nu_{i}]=-\kappa_{i}\omega_{i}$ and $[\omega_{i},\omega_{i}]=[\nu_{i},\nu_{i}]=z$. Similar to the first case, there is a homogeneous basis $$B=\{\gamma_{1},\rho_{1},\ldots,\gamma_{s},\rho_{s},\delta_{1},\ldots,\delta_{m-2r-2s}\}$$ of $V:=\mathrm{ker}\varphi\cap L_{\overline{1}}$ satisfying  $(*)$. Then  the map $u\mapsto u$, $z\mapsto z$, $x_{i}\mapsto e_{i}$, $y_{i}\mapsto \widehat{e_{i}}$, $\omega_{i}\mapsto w_{i}$, $\nu_{i}\mapsto \widehat{w_{i}}$, $\gamma_{i}\mapsto p_{i}$, $\rho_{i}\mapsto q_{i}$ and $\delta_{i}\mapsto z_{i}$ extends to a Lie superalgebra isomorphism which applies $\Gamma$ into a coarsening of
$\Gamma_{1}^{s}$.

(2) Now we compute the Weyl groups of these fine gradings. Note that for any $f\in \mathrm{Aut}(L)$, we have $0\neq f(z)\in \langle z\rangle$.
Denote by $[f]$ the class of an automorphism $f \in \mathrm{Aut}(\Gamma)$ in the quotient $\mathcal{W}(\Gamma)$.

(i) Suppose $f\in \mathrm{Aut}(\Gamma_{1}^{s})$. Then $f(u)\in \langle u\rangle$. Otherwise, there  exists some $i\leq k$ such that either $f(e_{i})\in \langle u\rangle$ or $f(\widehat{e_{i}})\in \langle u\rangle$, hence $$0\neq f(\lambda_{i}z)=[f(e_{i}),f(\widehat{e_{i}})]\in [u,L]\cap \langle z\rangle=0.$$
Then  $$f(e_{i}),f(\widehat{e_{i}})\in \langle e_{1}\rangle\cup\cdots\cup\langle e_{k}\rangle\cup\langle \widehat{e_{1}}\rangle\cup\cdots\cup\langle \widehat{e_{k}}\rangle$$ for all $i\leq k$.
 Since $$0\neq f([u,w_{i}])=[f(u),f(w_{i})]$$ and $$0\neq f([u,\widehat{w_{i}}])=[f(u),f(\widehat{w_{i}})],$$ we know $f(w_{i}),~f(\widehat{w_{i}})\not\in \ker (\mathrm{ad}u)$ for any $i=1,\ldots,r$, so we have $$f(w_{i}),f(\widehat{w_{i}})\in \langle w_{1}\rangle\cup\cdots\cup\langle w_{r}\rangle\cup\langle \widehat{w_{1}}\rangle\cup\cdots\cup\langle \widehat{w_{r}}\rangle.$$
Similarly we have $$f(p_{i}),f(q_{i})\in \langle p_{1}\rangle\cup\cdots\cup\langle p_{s}\rangle\cup\langle q_{1}\rangle\cup\cdots\cup\langle q_{s}\rangle$$ and $$f(z_{i})\in \langle z_{1}\rangle\cup\cdots\cup\langle z_{m-2r-2s}\rangle.$$
For each index $i\leq k$, there exists a unique $\theta_{i}\in\mathrm{Aut}(\Gamma_{1}^{s})$ such that $\theta_{i}|_{L_{\overline{1}}}=\mathbf{id},$
$$\theta_{i}(e_{j})=e_{j},~~~ \theta_{i}(\widehat{e_{j}})=\widehat{e_{j}}$$ for each $j\neq i$ and
$$\theta_{i}(z)=z,~~~\theta_{i}(u)=u,~~~\theta_{i}(e_{i})=\widehat{e_{i}},~~~ \theta_{i}(\widehat{e_{i}})=-e_{i}.$$
  Hence we can compose $f$ with some $\theta_{i}$'s if necessary to obtain that $$f':= \theta_{i_{1}}\cdots \theta_{i_{t}}f\in \mathrm{Aut}(\Gamma_{1}^{s})$$ satisfies
$$f'(e_{i})\in \langle e_{1}\rangle\cup\langle e_{2}\rangle\cup\cdots\cup\langle e_{k}\rangle$$ and
 $$f'(\widehat{e_{i}})\in \langle \widehat{e_{1}}\rangle\cup\langle \widehat{e_{2}}\rangle\cup\cdots\cup\langle \widehat{e_{k}}\rangle$$for each $i\leq k.$ Thus, there is $\sigma\in S_{k}$ (the permutation group of $k$ elements) such that $$f'(z)=\alpha z,~~~ f'(u)=\beta u,~~~ f'(e_{i})=\alpha_{i}e_{\sigma(i)},~~~ f'(\widehat{e_{i}})=\beta_{i}\widehat{e_{\sigma(i)}}$$ for any $i\leq k,$ with $\alpha,\beta,\alpha_{i},\beta_{i}\in \mathbb{C}^{\times}$.
From here, the equality $$f'([u,e_{i}])=[f'(u),f'(e_{i})]$$ implies that $\beta_{i}\lambda_{i}=\beta\alpha_{i}\lambda_{\sigma(i)}.$
Similarly, we have $\alpha_{i}\lambda_{i}=\beta\beta_{i}\lambda_{\sigma(i)}.$ Thus we get $\lambda_{\sigma(i)}\in \pm\beta^{-1}\lambda_{i}$ for any $i\leq k.$ By multiplying, we have $$\prod_{i=1}^{k}\lambda_{\sigma(i)}\in \pm\beta^{-k}\prod_{i=1}^{k}\lambda_{i}.$$ Hence $\beta^{2k}=1.$ As $\frac{\lambda_{\sigma(i)}}{\lambda_{i}}$ is not a root of unit if $\sigma(i)\neq i,$ we conclude that $\sigma=\mathbf{id}$.

For each index $i\leq r$, there exists a unique $\vartheta_{i}\in\mathrm{Aut}(\Gamma_{1}^{s})$ such that $\vartheta_{i}|_{L_{\overline{0}}}=\mathbf{id}$, $$\vartheta_{i}(w_{i})=\mathbf{i}\widehat{w_{i}},~~~ \vartheta_{i}(\widehat{w_{i}})=-\mathbf{i}w_{i},~~~\vartheta_{i}(p_{j})=p_{j},~~~\vartheta_{i}(q_{j})=q_{j},~~~\vartheta_{i}(z_{h})=z_{h}$$ for all $j\leq s$,
$h\leq m-2r-2s$ and $$\vartheta_{i}(w_{j})=w_{j},~~~ \vartheta_{i}(\widehat{w_{j}})=\widehat{w_{j}}$$ for each $j\neq i$.
       Hence we can compose $f'$ with some $\vartheta_{i}$'s if necessary to obtain that $f'':= \vartheta_{j_{1}}\cdots \vartheta_{j_{h}}f'\in \mathrm{Aut}(\Gamma_{1}^{s})$ satisfies
$$f''(w_{i})\in \langle w_{1}\rangle\cup\langle w_{2}\rangle\cup\cdots\cup\langle w_{r}\rangle$$ and
 $$f''(\widehat{w_{i}})\in \langle \widehat{w_{1}}\rangle\cup\langle \widehat{w_{2}}\rangle\cup\cdots\cup\langle \widehat{w_{r}}\rangle$$
for each $i\leq r.$ Thus, there is $\sigma'\in S_{r}$ such that $$f''(z)=\alpha z, ~~~f''(u)=\beta u,~~~ f''(w_{i})=\alpha_{i}'w_{\sigma'(i)},~~~ f''(\widehat{w_{i}})=\beta_{i}'\widehat{w_{\sigma'(i)}}$$ for any $i\leq r,$ with $\alpha_{i}',\beta_{i}'\in \mathbb{C}^{\times}$. By a similar argument as the above, we have $\sigma'=\mathbf{id}$.

For each index $i\leq s$, there exists a unique $\varrho_{i}\in\mathrm{Aut}(\Gamma_{1}^{s})$ such that $\varrho_{i}|_{L_{\overline{0}}}=\mathbf{id},$ $$\varrho_{i}(w_{j})=w_{j},~~~\varrho_{i}(\widehat{w_{j}})=\widehat{w_{j}},~~~\varrho_{i}(p_{i})=q_{i}, ~~~\varrho_{i}(q_{i})=p_{i},~~~\varrho_{i}(z_{t})=z_{t}$$ for all $ j\leq r,$ $t\leq m-2r-2s$ and $$\varrho_{i}(p_{j})=p_{j}, ~~~\varrho_{i}(q_{j})=q_{j}$$ for each $j\neq i$.
 Hence we can compose $f''$ with some $\varrho_{i}$'s if necessary to obtain that $f''':= \varrho_{k_{1}}\cdots \varrho_{k_{l}}f''$ satisfies
$$f'''(p_{i})\in \langle p_{1}\rangle\cup\langle p_{2}\rangle\cup\cdots\cup\langle p_{s}\rangle$$ and
$$f'''(q_{i})\in \langle q_{1}\rangle\cup\langle q_{2}\rangle\cup\cdots\cup\langle q_{s}\rangle$$
for each $i\leq s.$ Thus, there exist $\widetilde{\tau},\widetilde{\varsigma}\in \mathrm{Aut}(\Gamma_{1}^{s})$ such that $$\overline{f}:=\widetilde{\tau}\widetilde{\varsigma}f'''\in \mathrm{Stab}(\Gamma_{1}^{s}),$$
where $\tau\in S_{s}$, $\varsigma\in S_{m-2r-2s}$, $\widetilde{\tau}|_{L_{\overline{0}}}=\mathbf{id}$,
$$\widetilde{\tau}(p_{i})=p_{\tau(i)},~~~ \widetilde{\tau}(q_{i})=q_{\tau(i)},~~~  \widetilde{\tau}(w_{j})=w_{j},~~~
 \widetilde{\tau}(\widehat{w_{j}})=\widehat{w_{j}},~~~\widetilde{\tau}(z_{t})=z_{t}$$ for all $i\leq s$, $j\leq r,$ $t\leq m-2r-2s$ and
 $\widetilde{\varsigma}|_{L_{\overline{0}}}=\mathbf{id}$,
 $$\widetilde{\varsigma}(z_{t})=z_{\varsigma(t)},~~~\widetilde{\varsigma}(p_{i})=p_{i},~~~\widetilde{\varsigma}(q_{i})=q_{i},~~~
 \widetilde{\varsigma}(w_{j})=w_{j},~~~\widetilde{\varsigma}(\widehat{w_{j}})=\widehat{w_{j}}$$ for all $ t\leq m-2r-2s$, $i\leq s$, $j\leq r.$

 Since $\theta_{i}\theta_{j}=\theta_{j}\theta_{i}$, $\vartheta_{i}\vartheta_{j}=\vartheta_{j}\vartheta_{i}$, $\varrho_{i}\varrho_{j}=\varrho_{j}\varrho_{i}$ and $\theta_{i}^{2}$, $\vartheta_{i}^{2}$, $\varrho_{i}^{2}\in \mathrm{Stab}(\Gamma_{1}^{s})$, we have $$\mathcal{W}(\Gamma_{1}^{s})=\{[\theta_{i_{1}}\cdots\theta_{i_{t}}\vartheta_{j_{1}}\cdots\vartheta_{j_{h}}\varrho_{k_{1}}
 \cdots\varrho_{k_{l}}\widetilde{\tau}\widetilde{\varsigma}]\mid \tau\in S_{s}, ~\varsigma\in S_{m-2r-2s}\},$$ where $ i_{1}<\cdots< i_{t}\leq k,$ $ j_{1}<\cdots< j_{h}\leq r$ and $k_{1}<\cdots< k_{l}\leq s$.
 It is clear that we can identify $\mathcal{W}(\Gamma_{1}^{s})$ with the group
 $$\mathbb{Z}_{2}^{k+r+s}\rtimes(S_{s}\times S_{m-2r-2s}).$$

 (ii) For the other case, there exists a unique $\mu\in \mathrm{Aut}(\Gamma_{2}^{s})$ such that  $$\mu(z)=z,~~~ \mu(u)=-u,~~~ \mu(u_{i})=\mathbf{i}v_{i},~~~ \mu(v_{i})=\mathbf{i}u_{i}$$ for all $i\leq k$, $$\mu(f_{i})=g_{i},~~~ \mu(g_{i})=f_{i},~~~\mu(p_{j})=p_{j},~~~\mu(q_{j})=q_{j},~~~\mu(z_{t})=z_{t}$$ for all $i\leq r$, $j\leq s$, $t\leq m-2r-2s$. Suppose $f\in \mathrm{Aut}(\Gamma_{2}^{s})$. Then there are $\alpha, \beta\in \mathbb{C}^{\times}$ such that $f(z)=\alpha z$ and $f(u)=\beta u$. If $f(u_{i})$ is a multiple of either $u_{j}$ or $v_{j}$ for some $j$, this clearly implies that $f(v_{i})$ also is, hence there is $\sigma\in S_{k}$ such that $$f(u_{i})\in \langle u_{\sigma(i)}\rangle\cup\langle v_{\sigma(i)}\rangle$$ for all $i\leq k$. Similarly, there is $\sigma'\in S_{r}$ such that $f(f_{i})\in \langle f_{\sigma'(i)}\rangle\cup\langle g_{\sigma'(i)}\rangle$ for all $i\leq k$. As the above, we can conclude that $\sigma=\mathbf{id}$ and $\sigma'=\mathbf{id}$. By composing with $\mu$ if necessary, we can assume that $f(u_{1})\in \langle u_{1}\rangle,$ which implies $\beta=1.$ If $f(u_{i})\in \langle v_{i}\rangle$ for some $i$ or $f(f_{j})\in \langle g_{j}\rangle$ for some $j$, then $\beta=-1,$ which is a contradiction, so we have $f(u_{i})\in \langle u_{i}\rangle$ for all $i$ and $f(f_{j})\in \langle f_{j}\rangle$ for all $j$.

 Hence we can compose $f$ with some $\varrho_{i}$'s if necessary to obtain that $$f':= \varrho_{k_{1}}\cdots \varrho_{k_{t}}f\in \mathrm{Aut}(\Gamma_{2}^{s})$$ satisfies
$$f'(p_{i})\in \langle p_{1}\rangle\cup\langle p_{2}\rangle\cup\cdots\cup\langle p_{s}\rangle$$ and
$$f'(q_{i})\in \langle q_{1}\rangle\cup\langle q_{2}\rangle\cup\cdots\cup\langle q_{s}\rangle$$
for each $i=1,\ldots,s.$ Thus, there are $\tau\in S_{s}$, $\varsigma\in S_{m-2r-2s}$ such that $$\overline{f}:=\widetilde{\tau}\widetilde{\varsigma}f'\in \mathrm{Stab}(\Gamma_{2}^{s}),$$
So we have $$\mathcal{W}(\Gamma_{2}^{s})=\{[\mu\varrho_{k_{1}}
 \cdots\varrho_{k_{t}}\widetilde{\tau}\widetilde{\varsigma}]\mid\tau\in S_{s}, ~\varsigma\in S_{m-2r-2s}\},$$ where  $k_{1}<\cdots< k_{t}\leq s$.
 It is clear that we can identify $\mathcal{W}(\Gamma_{2}^{s})$ with the group
 $$\mathbb{Z}_{2}^{1+s}\rtimes(S_{s}\times S_{m-2r-2s}).$$
\end{proof}

\subsection{Fine gradings in the general case}\label{general case}
In the general case, the situation is much more complicated. In order to describe all the fine gradings, we previously need to give some key examples. Similar to the situation of the twisted  Heisenberg Lie algebras (see \cite[Section 6.4]{Cal2}), these examples motivate the definition of the block of type I, II and III.

For $\xi$ a primitive $l$th root of  unit, $\alpha,\beta$  nonzero scalars, we consider the twisted Heisenberg superalgebra $H_{2l+2,2l}^{\lambda,\kappa}$ corresponding to $\lambda=(\lambda_{1},\ldots,\lambda_{l})=(\xi\alpha,\xi^{2}\alpha,\ldots,\xi^{l-1}\alpha,\alpha)$ and $\kappa=(\kappa_{1},\ldots,\kappa_{l})=(\xi\beta,\xi^{2}\beta,\ldots,\xi^{l-1}\beta,\beta).$ Thus
\begin{eqnarray*}
   && [u,u_{i}]=-\mathbf{i}\xi^{i}\alpha u_{i},~~~
    [u,v_{i}]=\mathbf{i}\xi^{i}\alpha v_{i}, ~~~
    [u_{i},v_{i}]=-\xi^{i}\alpha z, \\
   && [u,f_{i}]=-\mathbf{i}\xi^{i}\beta f_{i},~~~~
    [u,g_{i}]=\mathbf{i}\xi^{i}\beta g_{i},~~~
    [f_{i},g_{i}]=z,
\end{eqnarray*}
with the definition of $u_{i}$'s, $v_{i}$'s, $f_{i}$'s and $g_{i}$'s as in Eq. (\ref{second grading}) and $1\leq i\leq l$.
Take now \begin{eqnarray*}
 && x_{j}=\sum\limits_{i=1}^{l}\xi^{ji}u_{i},~~~~~~
  y_{j}=-\frac{1}{l}\sum\limits_{i=1}^{l}(-1)^{j}\xi^{(j-1)i}v_{i}, \\
 &&  \omega_{j}=\sum\limits_{i=1}^{l}\xi^{ji}f_{i},~~~~~~
\nu_{j}=\frac{1}{l}\sum\limits_{i=1}^{l}(-1)^{j}\xi^{ji}g_{i},
         \end{eqnarray*}
 where $1\leq j\leq l$.
So we have \begin{eqnarray*}
              && [u,x_{h}]=-\mathbf{i}\alpha x_{h+1},~~~~~~~~
               [u,x_{l}]=-\mathbf{i}\alpha x_{1},~~~~~~~~
               [u,y_{h}]=-\mathbf{i}\alpha y_{h+1},\\
             &&  [u,y_{l}]=-(-1)^{l}\mathbf{i}\alpha y_{1},~~~~~~
               [x_{i},y_{j}]=(-1)^{j}\frac{\alpha}{l}(\sum\limits_{t=1}^{l}\xi^{t(i+j)})z,\\
              && [u,\omega_{h}]=-\mathbf{i}\beta \omega_{h+1},~~~~~~~~
              [u,\omega_{l}]=-\mathbf{i}\beta \omega_{1},~~~~~~~~
               [u,\nu_{h}]=-\mathbf{i}\beta \nu_{h+1},\\
             &&  [u,\nu_{l}]=-(-1)^{l}\mathbf{i}\beta \nu_{1},~~~~~~
             [\omega_{i},\nu_{j}]=(-1)^{j}\frac{1}{l}(\sum\limits_{t=1}^{l}\xi^{t(i+j)})z,
           \end{eqnarray*}
           where $1\leq h\leq l-1$ and $1\leq i,j\leq l$.
Note that $[x_{i},y_{j}]$ and  $[\omega_{i},\nu_{j}]$ are not zero if and only if $i+j=l,2l$. In such a case
\begin{equation}\label{block1}
[x_{l},y_{l}]=(-1)^{l}\alpha z,~~~ [x_{i},y_{l-i}]=(-1)^{l-i}\alpha z,~~~ [\omega_{l},\nu_{l}]=(-1)^{l}z,~~~[\omega_{i},\nu_{l-i}]=(-1)^{l-i}z
\end{equation}
 for all $i=1,\ldots,l-1$. Obviously $$\{x_{1},y_{1},\ldots,x_{l},y_{l}\mid\omega_{1},\nu_{1},\ldots,\omega_{l},\nu_{l}\}$$ is a family of independent vectors such that the only nonzero brackets are given by Eq. (\ref{block1}).

Therefore we have a fine grading on $L=H_{2l+2,2l}^{\lambda,\kappa}$ over the group $G=\mathbb{Z}^{2}\times \mathbb{Z}_{l}$ given by
\begin{equation}\label{grading example1}
\begin{split}
   & L_{(0;0;\overline{1})}=\langle u\rangle, \\
   & L_{(3;3;\overline{0})}=\langle z\rangle, \\
   & L_{(1;2;\overline{i})}=\langle x_{i}\rangle, \\
   & L_{(2;1;\overline{i})}=\langle y_{i}\rangle, \\
   & L_{(0;3;\overline{i})}=\langle \omega_{i}\rangle, \\
   & L_{(3;0;\overline{i})}=\langle \nu_{i}\rangle,
\end{split}
\end{equation}
where $1\leq i\leq l.$

If $l$ is even, there exist $\theta,\overline{\theta},\vartheta,\overline{\vartheta}\in \mathrm{Aut}(L)$ such that
\begin{eqnarray*}
   &&\theta(x_{i})=\mathbf{i}x_{i+1},~~\theta(y_{i})=\mathbf{i}y_{i-1},~~\theta(\omega_{i})=\omega_{i},~~\theta(\nu_{i})=\nu_{i},~~\theta(z)=z,~~\theta(u)=u; \\
   && \overline{\theta}(\omega_{i})=\mathbf{i}\omega_{i+1},~~\overline{\theta}(\nu_{i})=\mathbf{i}\nu_{i-1},~~\overline{\theta}(x_{i})=x_{i},~~\overline{\theta}(y_{i})=y_{i},~~\overline{\theta}(z)=z,~~\overline{\theta}(u)=u; \\
   && \vartheta(x_{i})=y_{i},~~~~~~\vartheta(y_{i})=-x_{i},~~~\vartheta(\omega_{i})=\omega_{i},~~~\vartheta(\nu_{i})=\nu_{i},~~~\vartheta(z)=z,~~~\vartheta(u)=u; \\
 && \overline{\vartheta}(x_{i})=x_{i},~~~~~~\overline{\vartheta}(y_{i})=y_{i},~~~~~\overline{\vartheta}(\omega_{i})=\nu_{i},~~~~~\overline{\vartheta}(\nu_{i})=\omega_{i},~~~~~\overline{\vartheta}(z)=z,~~~~~\overline{\vartheta}(u)=u, \end{eqnarray*}
where the indices are taken modulo $l.$ It is not difficult to check the Weyl group of the grading described in Eq. (\ref{grading example1}) is generated by the classes $[\theta],[\overline{\theta}]$ and $[\vartheta],[\overline{\vartheta}]$. Note that $\langle[\theta],[\vartheta]\rangle\cong D_{l}$, $\langle[\overline{\theta}],[\overline{\vartheta}]\rangle\cong D_{l}$,  where $D_{l}$ is the
Dihedral group. Then the Weyl group is isomorphic to  $D_{l}\times D_{l}$.

If $l$ is odd, the maps $\theta,\overline{\theta}$ are still automorphisms, but $\vartheta$ and $\overline{\vartheta}$ are not longer automorphisms. Now there exists a unique $\vartheta'\in \mathrm{Aut}(\Gamma)$ such that
$\vartheta'(u)=-u,~~\vartheta'(z)=z$ and
$$\vartheta'(x_{i})=(-1)^{i}y_{i},~~\vartheta'(y_{i})=(-1)^{i+1}x_{i},~~\vartheta'(\omega_{i})=(-1)^{i}
\nu_{i},~\vartheta'(\nu_{i})=(-1)^{i}\omega_{i}$$ for all $i\leq l$. It is not difficult to check the Weyl group of the grading described in Eq. (\ref{grading example1}) is generated by the classes $[\theta],[\overline{\theta}]$ and $[\vartheta']$, which is isomorphic to $(\mathbb{Z}_{l}\times \mathbb{Z}_{l})\rtimes \mathbb{Z}_{2}$.

This example motivates the following definition.
\begin{definition}
Let $L$ be a Lie  superalgebra, $z\in L$ a fixed even element, $u$ an arbitrary even element and $\alpha,\beta\in \mathbb{C}^{\times}$. A set $\{x_{1},y_{1},\ldots,x_{l},y_{l}\}$,  which will be called a block of type $B_{l}^{\mathrm{I}}(u,\alpha)$, or simply be called an even block of type I, is given by a family of $2l$ independent
even elements in $L$,
satisfying that the only nonzero products among them are the following:
\begin{eqnarray*}
              && [u,x_{i}]=-\mathbf{i}\alpha x_{i+1},\\
              && [u,x_{l}]=-\mathbf{i}\alpha x_{1},\\
              && [u,y_{i}]=-\mathbf{i}\alpha y_{i+1},\\
              && [u,y_{l}]=-(-1)^{l}\mathbf{i}\alpha y_{1},\\
              && [x_{i},y_{l-i}]=(-1)^{l-i}\alpha z,\\
              && [x_{l},y_{l}]=(-1)^{l}\alpha z,
           \end{eqnarray*}
           where $1\leq i\leq l-1$.
A set $\{\omega_{1},\nu_{1},\ldots,\omega_{l},\nu_{l}\}$, which will be called a block of type $\overline{B}_{l}^{\mathrm{I}}(u,\beta)$, or simply be called an odd block of type I, is given by a family of $2l$ independent
odd elements in $L$,
satisfying that the only nonzero products among them are the following:
          \begin{eqnarray*}
          && [u,\omega_{i}]=-\mathbf{i}\beta \omega_{i+1},\\
              && [u,\omega_{l}]=-\mathbf{i}\beta \omega_{1},\\
              && [u,\nu_{i}]=-\mathbf{i}\beta \nu_{i+1}, \\
              && [u,\nu_{l}]=-(-1)^{l}\mathbf{i}\beta \nu_{1}, \\
              && [\omega_{i},\nu_{l-i}]=(-1)^{l-i}z,\\
              && [\omega_{l},\nu_{l}]=(-1)^{l}z,
               \end{eqnarray*}
               where $1\leq i\leq l-1$.
\end{definition}

To give the next example, fix a primitive $2l$th root $\zeta$ of  unit and nonzero scalars $\alpha,\beta$. Consider now the twisted Heisenberg superalgebra
$$H^{\lambda,\kappa}_{2l+2,2l+2s+p}=\langle z,u,u_{1},v_{1},\ldots,u_{l},v_{l}\mid f_{1},g_{1},\ldots,f_{l},g_{l},p_{1},q_{1},\ldots,p_{s},q_{s},z_{1},\ldots,z_{p}\rangle$$ corresponding to $$\lambda=(\lambda_{1},\ldots,\lambda_{l})=(\zeta\alpha,\zeta^{2}\alpha,\ldots,\zeta^{l-1}\alpha,-\alpha),$$
$$\kappa=(\kappa_{1},\ldots,\kappa_{l})=(\zeta\beta,\zeta^{2}\beta,\ldots,\zeta^{l-1}\beta,-\beta),$$
and the the nonzero Lie brackets are given by
 \begin{eqnarray*}
   && [u,u_{i}]=-\mathbf{i}\zeta^{i}\alpha u_{i}, \\
   && [u,v_{i}]=\mathbf{i}\zeta^{i}\alpha v_{i}, \\
   && [u_{i},v_{i}]=-\zeta^{i}\alpha z, \\
   && [u,f_{i}]=-\mathbf{i}\zeta^{i}\beta f_{i},\\
   && [u,g_{i}]=\mathbf{i}\zeta^{i}\beta g_{i}, \\
   && [f_{i},g_{i}]=[p_{j},q_{j}]=[z_{t},z_{t}]=z,
\end{eqnarray*}
where $1\leq i\leq l$, $1\leq j\leq s$ and $1\leq t\leq p$.
Take now
\begin{eqnarray*}
   && x_{j}=\frac{\mathbf{i}}{\sqrt{2l}}\sum\limits_{i=1}^{l}(u_{i}+(-1)^{j-1}v_{i})\zeta^{(j-1)i} \\
   && y_{j}=\frac{\mathbf{i}}{\sqrt{2l}}\sum\limits_{i=1}^{l}(f_{i}+(-1)^{j-1}g_{i})\zeta^{ji}
\end{eqnarray*}
for each $1\leq j\leq 2l$. Observe that $\{x_{1},\ldots,x_{2l}\mid y_{1},\ldots,y_{2l}\}$ is a family of linearly independent elements satisfying
\begin{eqnarray*}
              && [u,x_{i}]=-\mathbf{i}\alpha x_{i+1},\\
              && [u,x_{2l}]=-\mathbf{i}\alpha x_{1},\\
              && [u,y_{i}]=-\mathbf{i}\beta y_{i+1},\\
              && [u,y_{2l}]=-\mathbf{i}\beta y_{1},
                      \end{eqnarray*}
                      where $1\leq i\leq 2l-1$.
A direct computation gives $$[x_{i},x_{j}]=\frac{1}{2l}\alpha((-1)^{i}+(-1)^{j-1})(\sum\limits_{k=1}^{l}\zeta^{(i+j-1)k})z$$
for all $1\leq i,j\leq 2l$.
If $i+j-1=2l,$ then $i$ and $j-1$ have the same parity and $[x_{i},x_{2l+1-i}]=(-1)^{i}\alpha z\neq 0$. Hence,
$$[x_{1},x_{2l}]=-[x_{2},x_{2l-1}]=\cdots=(-1)^{l-1}[x_{l},x_{l+1}]=-\alpha z$$ and the remaining brackets are zero:
if $r=i+j-1$ is odd, then $(-1)^{i}+(-1)^{j-1}=0$, and, if $r=i+j-1\neq 2l$ is even, then $\sum_{k=1}^{l}\zeta^{rk}=0$ since $\zeta^{2}$ is a primitive $l$th root of unit.

Similarly we have $$[y_{i},y_{j}]=\frac{1}{2l}((-1)^{i}+(-1)^{j})(\sum\limits_{k=1}^{l}\zeta^{(i+j)k})z$$
for all $1\leq i,j\leq 2l$. If $i+j=2l,$ then $i$ and $j$ have the same parity and $[y_{i},y_{2l-i}]=(-1)^{i}z\neq 0$. Hence,
$$-[y_{1},y_{2l-1}]=[y_{2},y_{2l-2}]=\cdots=(-1)^{l}[y_{l},y_{l}]= z$$ and the remaining brackets are zero.

We note that this provides a fine grading on $L=H_{2l+2,2l+2s+p}^{\lambda,\kappa}$ over the group $G=\mathbb{Z}\times \mathbb{Z}_{4l}\times \mathbb{Z}^{s}\times \mathbb{Z}_{2}^{p}$ given by
\begin{eqnarray*}
   &&  \langle u\rangle=L_{(0;\overline{2};0,\ldots,0;\overline{0},\ldots,\overline{0})},\\
   && \langle z\rangle=L_{(2;\overline{2};0,\ldots,0;\overline{0},\ldots,\overline{0})}, \\
   && \langle x_{i}\rangle=L_{(1;\overline{2i};0,\ldots,0;\overline{0},\ldots,\overline{0})},\\
   && \langle y_{i}\rangle=L_{(1;\overline{2i+1};0,\ldots,0;\overline{0},\ldots,\overline{0})},\\
   && \langle p_{i}\rangle=L_{(1;\overline{1};0,\ldots,1,\ldots,0;\overline{0},\ldots,\overline{0})} (1~ \hbox{in the}~ i\hbox{-th slot}),\\
   && \langle q_{i}\rangle=L_{(1;\overline{1};0,\ldots,-1,\ldots,0;\overline{0},\ldots,\overline{0})},\\
   && \langle z_{i}\rangle=L_{(1;\overline{1};0,\ldots,0;\overline{0},\ldots,\overline{1},\ldots,\overline{0})}.
\end{eqnarray*}
Clearly, there exists a unique $\rho\in \mathrm{Aut}(L)$ such that $$\rho(x_{i})=x_{l+i},~~~ \rho(y_{i})=y_{l+i},~~~ \rho(z)=(-1)^{l}z$$ for all $i=1,\ldots,2l ~(\mathrm{mod}~ 2l)$ and $\rho|_{V}=\mathbf{id}$, where $V$ is the subspace spanned by  $$\{u, p_{i},q_{i},z_{j}\mid i\leq s, j\leq p\}.$$ Take $\varrho_{i}$, $\widetilde{\tau}$ and $\widetilde{\varsigma}$ described in Theorem \ref{classification1}. Then the Weyl group of the grading is isomorphic to $$\mathbb{Z}_{2}^{1+s}\rtimes(S_{s}\times S_{p}),$$ since it is easily proved to be generated by $[\rho]$,  $[\varrho_{k_{i}}]$, $[\widetilde{\tau}]$ and $[\widetilde{\varsigma}]$.

This example motivates the following definition.
\begin{definition}
Let $L$ be a Lie  superalgebra, $z\in L$ a fixed even element, $u$ an arbitrary even element and $\alpha,\beta\in \mathbb{C}^{\times}$. A set $\{x_{1},x_{2},\ldots,x_{2l}\}$, which will be called a block of type $B_{l}^{\mathrm{II}}(u,\alpha)$, or simply be called an even block of type II, is given by a family of $2l$ independent
even elements in $L$,
satisfying the only nonzero products among them are the following:
\begin{eqnarray*}
              && [u,x_{i}]=-\mathbf{i}\alpha x_{i+1}(\mathrm{mod}~ 2l),\\
              && [x_{i},x_{2l-i+1}]=(-1)^{i}\alpha z,
              \end{eqnarray*}
              where $1\leq i\leq 2l.$
              A set $\{y_{1},y_{2},\ldots,y_{2l}\}$, which will be called a block of type $\overline{B}_{l}^{\mathrm{II}}(u,\alpha)$, or simply be called an odd block of type II, is given by a family of $2l$ independent
odd elements in $L$,
satisfying that the only nonzero products among them are the following:
\begin{eqnarray*}
              && [u,y_{i}]=-\mathbf{i}\beta y_{i+1}(\mathrm{mod}~ 2l),\\
              && [y_{i},y_{2l-i}]=(-1)^{i}z,
              \end{eqnarray*}
          where $1\leq i\leq 2l.$
               A set $\{p_{1},q_{1},\ldots,p_{s},q_{s},z_{1},\ldots,z_{p}\}$, which will be called a block of type $B_{s,p}^{\mathrm{III}}(u)$, or simply be called a block of type III, is given by a family of $2s+p$ independent
odd elements in $L$,
satisfying that the only nonzero products among them are the following:
          \begin{eqnarray*}
          && [p_{i},q_{i}]=z,\\
              && [z_{j},z_{j}]=z,
               \end{eqnarray*}
               where $1\leq i\leq s$ and $1\leq j\leq p$.
\end{definition}
In fact, all of the gradings on a twisted Heisenberg superalgebra can be described with blocks of type I, II and III, according to the following proposition.
\begin{proposition}\label{classification2}
Let $\Gamma$ be a $G$-grading on $L=H_{n,m}^{\lambda,\kappa}$. Let $z\in \mathrm{Z}(L)$. Then there exist $u\in L$, positive integers $l,t,t',q,q',s,p$ such that
$$l(q+q'+2t+2t')+2s+p=n+m-1$$ $(q,q'=0$ when $l$ is odd) and scalars $\beta_{1},\ldots,\beta_{t}$, $\beta_{1}',\ldots,\beta_{t'}'$, $\alpha_{1},\ldots,\alpha_{q}$, $\alpha_{1}',\ldots,\alpha_{q'}'\in \{\pm\lambda_{1},\ldots,\pm\lambda_{k},\pm \kappa_{1},\ldots,\pm \kappa_{r}\}$ such that
\begin{equation}\label{block}
  \{z,u\}\cup(\bigcup\limits_{j=1}^{t}X_{j})\cup(\bigcup\limits_{j=1}^{t'}X'_{j})
\cup(\bigcup\limits_{j=1}^{q}Y_{j})\cup(\bigcup\limits_{j=1}^{q'}Y'_{j})\cup T
\end{equation}
is a basis of homogeneous elements of $\Gamma$, where $X_{j}$ is a block of type $B_{l}^{\mathrm{I}}(u,\beta_{j})$ for all $j=1,\ldots,t$, $X'_{j}$ is a block of type $\overline{B}_{l}^{\mathrm{I}}(u,\beta_{j}')$ for all $j=1,\ldots,t'$, $Y_{j}$ is a block of type $B_{\frac{l}{2}}^{\mathrm{II}}(u,\alpha_{j})$ for all $j=1,\ldots,q$, $Y'_{j}$ is a block of type $\overline{B}_{\frac{l}{2}}^{\mathrm{II}}(u,\alpha_{j}')$ for all $j=1,\ldots,q'$, $T$ is a block of type $B_{s,p}^{\mathrm{III}}(u)$,  the brackets of any two elements belonging to different blocks are zero.

\end{proposition}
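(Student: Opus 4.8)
The plan is to make the diagonalizable operator $\varphi=\mathrm{ad}\,u$ do all the organizing work, just as in Theorem \ref{classification1}, but now for the general order $l$ of $h=\mathrm{deg}\,u$. First I would use Lemma \ref{homo basis} to take $u$ homogeneous, so that $\varphi$ is diagonalizable with $\varphi(L_g)\subset L_{g+h}$ and $\varphi^{l}(L_g)\subset L_g$. Diagonalizability gives $\ker\varphi=\ker\varphi^{l}$ and $[u,L]=\mathrm{im}\,\varphi=\mathrm{im}\,\varphi^{l}$, and since $\varphi^{l}$ has degree $0$, both subspaces are $G$-graded. Together with the already homogeneous $z$ this yields the $G$-graded, $\varphi$-invariant decomposition
\begin{equation*}
L=\langle z\rangle\oplus\langle u\rangle\oplus[u,L]\oplus V,\qquad V:=\ker\varphi\cap L_{\overline{1}},
\end{equation*}
and the pair $\{z,u\}$ is in hand.

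For $V$ I would note that the bracket restricts to a nondegenerate symmetric form on $V$ with $[x,y]=\langle x,y\rangle z$; since $V$ is $G$-graded, \cite[Lemma 2]{Cal2} furnishes a homogeneous basis adapted to this form, which is exactly a block $T$ of type $B^{\mathrm{III}}_{s,p}(u)$ with $2s+p=\dim V=m-2r$.

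The heart of the proof is the decomposition of $[u,L]$. Because the $G$-grading is compatible with parity, $[u,L]_{\overline{0}}$ and $[u,L]_{\overline{1}}$ are $G$-graded and $\varphi$-invariant, and the bracket restricts to a nondegenerate symplectic form on the even part and a nondegenerate symmetric form on the odd part, both valued in $\langle z\rangle$. Fixing a parity, I would split the space into the $G$-graded $\varphi^{l}$-eigenspaces $V_\mu^{l}=\bigoplus_{j=0}^{l-1}V_{\xi^{j}\mu}$ of Eq.\ (\ref{eigenspace}) and build blocks greedily. Choosing a nonzero homogeneous $w$ in a single layer $V_\mu^{l}\cap L_g$ and applying $\varphi$ produces a cyclic string $w,\varphi(w),\ldots,\varphi^{l-1}(w)$ of linearly independent homogeneous elements (their degrees $g,g+h,\ldots,g+(l-1)h$ being distinct as $h$ has order $l$), which after rescaling is the $x$-part of a block. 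The super Jacobi identity in the form $0=[u,[x,y]]=[\varphi x,y]+[x,\varphi y]$, valid because $[x,y]\in\langle z\rangle$ and $\varphi z=0$, shows that the form can only pair an eigenvalue $\nu$ with $-\nu$; this fixes the index pattern of the nonzero brackets inside and across strings. After peeling off one block $W$, I would replace the ambient space by the orthogonal complement of $W$ with respect to the form, which is again $G$-graded (the form is graded since $z$ is homogeneous) and $\varphi$-invariant, and iterate, exactly as in the $W\oplus\mathrm{Z}(W)$ splittings of Theorem \ref{classification1}; this guarantees that distinct blocks centralize one another.

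The main obstacle is to certify the dichotomy between blocks of type I and type II, which is governed by the parity of $l$. When $l$ is odd, $-1\notin\langle\xi\rangle$, so $V_\mu^{l}$ and $V_{-\mu}^{l}$ are distinct $\varphi^{l}$-eigenspaces; a string in $V_\mu^{l}$ is isotropic for the form and must be paired with a transversal string from $V_{-\mu}^{l}$, producing a type I block ($B^{\mathrm{I}}_{l}$ in the even case, $\overline{B}^{\mathrm{I}}_{l}$ in the odd case) of $2l$ elements, whence $q=q'=0$. When $l$ is even, $-\mu=\xi^{l/2}\mu$ lies in the same orbit, so $V_\mu^{l}=V_{-\mu}^{l}$ contains each $\nu$ together with $-\nu$; here one must test whether the form restricted to a single string is nondegenerate, giving a self-paired type II block $B^{\mathrm{II}}_{l/2}$ (or $\overline{B}^{\mathrm{II}}_{l/2}$) of $l$ elements, or isotropic, forcing a pairing with a second string and hence a type I block. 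Carrying this case analysis out while matching the normalizations and the sign factors $(-1)^{i},(-1)^{l}$ in the block definitions, separately for the symplectic and the symmetric form, is the technical crux. Once it is complete, the union of $\{z,u\}$, the even and odd blocks of types I and II, and the type III block $T$ is a homogeneous basis of $L$, and counting the $2l$, $l$ and $2s+p$ elements contributed by the respective blocks gives $l(q+q'+2t+2t')+2s+p=\dim L-2=n+m-1$.
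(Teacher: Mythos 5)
Your proposal is correct and follows essentially the same route as the paper: homogenize $u$ via Lemma \ref{homo basis}, split $L$ as $\langle z\rangle\oplus\langle u\rangle\oplus[u,L]\oplus(\ker\varphi\cap L_{\overline{1}})$, extract the type III block from the last summand via \cite[Lemma 2]{Cal2}, build cyclic $\varphi$-strings inside the graded eigenspaces $V_{\mu}^{l}$, and peel off blocks using the $W\oplus\mathrm{Z}(W)$ splitting, with the type I versus type II dichotomy governed by the parity of $l$ exactly as in the paper's case analysis. The only cosmetic difference is that you derive the pairing pattern of the form from the identity $[u,[x,y]]=0$, where the paper uses degree bookkeeping in $G$; both arguments are equivalent here.
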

\begin{proof}
Recall that $z$ is always a homogeneous element. By Lemma \ref{homo basis}, we can assume that $u$ is also homogeneous of degree $h\in G$, necessarily of finite order. Let $l\in \mathbb{Z}_{\geq 0}$ be the order of $h$. Take $\varphi=\mathrm{ad}u$ and consider again the subspaces $V_{\mu_{i}}$ and $V_{\mu_{i}}^{l}$.
Recall that $V_{\mu_{i}}^{l}$ is a $\varphi$-invariant $G$-graded subspace for all $\mu_{i}\in \mathrm{Spec}(u)$.

(1) Let us discuss first the case that $l$ is odd. Fix any $0\neq x\in V_{\mu_{1}}^{l}\cap L_{g}\cap L_{\overline{0}}$ for some $g\in G.$ Since each $\varphi^{i}(x)\in L_{g+ih}$, one sees that $$\{x,\varphi(x),\ldots,\varphi^{l-1}(x)\}$$ is a family of linearly independent elements of $L$. Now observe that Eq. (\ref{eigenspace}), together with the fact that $l$ is odd,  one sees that $[V_{\mu_{1}}^{l}\cap L_{\overline{0}},V_{\mu_{1}}^{l}\cap L_{\overline{0}}]=0$ and $[V_{\mu_{1}}^{l}\cap L_{\overline{0}},V_{-\mu_{1}}^{l}\cap L_{\overline{0}}]\neq0$. From here,
$[\varphi^{i}(x),\varphi^{j}(x)]=0$ for any $i$, $j=0,1,\ldots,l-1$, and we can take a nonzero homogeneous element $0\neq y\in V_{-\mu_{1}}^{l}\cap L_{p}\cap L_{\overline{0}}$
such that $[x,y]\neq 0.$ By scaling if necessary, we can suppose $[x,y]=\lambda_{1}z$. Then $\mathrm{deg}z=g+p$. As above, we also have that $$\{y,\varphi(y),\ldots,\varphi^{l-1}(y)\}$$ is a family of linearly independent elements of $L$ satisfying $$[\varphi^{i}(y),\varphi^{j}(y)]=0$$
for any $i$, $j=0,1,\ldots,l-1$. Since $\varphi^{i}(x)\in L_{g+ih}$ and $\varphi^{j}(y)\in L_{p+jh}$, we get that if $[\varphi^{i}(x),\varphi^{j}(y)]\neq0$, then $g+p+(i+j)h=\mathrm{deg}z=g+p$. Hence we have $i+j$ is a multiple of $l$. That is, for each $0\leq i,j<l,$ $$[\varphi^{i}(x),\varphi^{j}(y)]=0$$ if $i+j\neq 0,l.$ Also note that $$[\varphi^{i}(x),\varphi^{l-i}(y)]=(-1)^{l-i}\mu_{1}^{l}[x,y]\neq0.$$
Indeed, take $\xi$ a primitive $l$th root of  unit and write $x=\sum_{j=0}^{l-1}a_{j}$ and $y=\sum_{j=0}^{l-1}b_{j}$ for $a_{j}\in V_{\xi^{j}\mu_{1}}\cap L_{\overline{0}}$ and
$b_{j}\in V_{-\xi^{j}\mu_{1}}\cap L_{\overline{0}}$. Then
\begin{eqnarray*}
  [\varphi^{i}(x),\varphi^{l-i}(y)] &=& \sum\limits_{j,k=0}^{l-1}[\xi^{ji}\mu_{1}^{i}a_{j},(-\xi^{k})^{l-i}\mu_{1}^{l-i}b_{k}] \\
   &=& \mu_{1}^{l}(-1)^{l-i}\sum\limits_{j=0}^{l-1}\xi^{ji}\xi^{j(l-i)}[a_{j},b_{j}]=(-1)^{l-i}\mu_{1}^{l}[x,y].
\end{eqnarray*}
It is clear that the family $$\{x,\varphi(x),\ldots,\varphi^{l-1}(x),y,\varphi(y),\ldots,\varphi^{l-1}(y)\}$$
is linearly independent. Recall that $\mu_{1}=-\mathbf{i}\lambda_{1}$. We have
$$W:=\left\{\frac{\varphi(x)}{\mu_{1}},\frac{\varphi(y)}{\mu_{1}},\cdots,\frac{\varphi^{i}(x)}{\mu_{1}^{i}},\frac{\varphi^{i}(y)}{\mu_{1}^{i}},\cdots,
\frac{\varphi^{l}(x)}{\mu_{1}^{l}}=x,\frac{\varphi^{l}(y)}{\mu_{1}^{l}}=(-1)^{l}y\right\}$$ is a block of type $B_{l}^{\mathrm{I}}(u,\lambda_{1})$.

Now $[u,L]=W\oplus \mathrm{Z}_{[u,L]}(W)$, where $W$ and its centralizer $\mathrm{Z}_{[u,L]}(W)$ are $G$-graded and $\varphi$-invariant. We continue  this process on $\mathrm{Z}_{[u,L]}(W)$ until finding all $t+t'$ blocks of type I of homogeneous elements.

(2) Now consider the case with $l$ even. Take a linear subspace $V_{\mu_{j}^{l}}\neq 0$ as above. Then we have two different cases to distinguish.

(i) Suppose $1\leq j\leq k.$

Case 1. Assume  for any $g\in G$ and  $x\in V_{\mu_{j}}^{l}\cap L_{g}\cap L_{\overline{0}}$ we have $[x,\varphi(x)]=0$. Fix $0\neq x\in V_{\mu_{j}}^{l}\cap L_{g}\cap L_{\overline{0}}$ for some $g\in G$. Then $$\{x,\varphi(x),\ldots,\varphi^{l-1}(x)\}$$ is a family of linearly independent elements of $L$. By induction on $n$ it is easy to show $[\varphi^{i}(x),\varphi^{i+n}(x)]=0$ for any $i=0,\ldots,l-1$ and $n=1,\ldots,l$.  That is, $[\varphi^{i}(x),\varphi^{i'}(x)]=0$ for any $i,i'=0,\ldots,l-1.$

Since the fact $l$ is even implies $V_{\mu_{j}}^{l}=V_{-\mu_{j}}^{l}$, we can choose a homogeneous element $0\neq y\in V_{\mu_{j}}^{l}\cap L_{g'}\cap L_{\overline{0}}$ for some $g'\in G$, such that $0\neq[x,y]=\lambda_{j}z.$ The same arguments that in the odd case say that again $$W:=\left\{\frac{\varphi(x)}{\mu_{j}},\frac{\varphi(y)}{\mu_{j}},\cdots,\frac{\varphi^{i}(x)}{\mu_{j}^{i}},\frac{\varphi^{i}(y)}{\mu_{j}^{i}},\cdots,
\frac{\varphi^{l}(x)}{\mu_{j}^{l}},\frac{\varphi^{l}(y)}{\mu_{j}^{l}}\right\}$$ is a block of type $B_{l}^{\mathrm{I}}(u,\lambda_{j})$. Now we can write
$$[u,L]=W\oplus \mathrm{Z}_{[u,L]}(W),$$ where $W$ and $\mathrm{Z}_{[u,L]}(W)$ are $G$-graded and $\varphi$-invariant.

Case 2. Assume that there exists a  nonzero  element  $x\in V_{\mu_{j}}^{l}\cap L_{g}\cap L_{\overline{0}}$ for some $g\in G$ such that $[x,\varphi(x)]\neq0.$  By scaling if necessary, we can assume $[x,\varphi(x)]=\mu_{j}\lambda_{j}z$. As above one sees that $$\{x,\varphi(x),\ldots,\varphi^{l-1}(x)\}$$ is a family of homogeneous linearly independent elements of $L$ satisfying $$[\varphi^{i}(x),\varphi^{i'}(x)]=0$$ if $i+i'\neq 1,l+1$.
A direct computation shows that $$[\varphi^{i}(x),\varphi^{l-i+1}(x)]=(-1)^{i}\mu_{j}^{l}[x,\varphi(x)]=(-1)^{i}\mu_{j}^{l+1}\lambda_{j}z$$
for any $i=1,\ldots,l.$ Thus the set $$W:=\left\{\frac{\varphi(x)}{\mu_{j}},\cdots,\frac{\varphi^{i}(x)}{\mu_{j}^{i}},\cdots,
\frac{\varphi^{l}(x)}{\mu_{j}^{l}}=x\right\} $$ is a block of type $B_{\frac{l}{2}}^{\mathrm{II}}(u,\lambda_{j})$.
Now we can write
$$[u,L]=W\oplus \mathrm{Z}_{[u,L]}(W),$$ where $W$ and $\mathrm{Z}_{[u,L]}(W)$ are $G$-graded and $\varphi$-invariant.

(ii) Suppose $k+1\leq j\leq k+r.$

Case 1. Assume  for any $g\in G$ and any $x\in V_{\mu_{j}}^{l}\cap L_{g}\cap L_{\overline{1}}$ we have $[x,x]=0$. Fix $0\neq x\in V_{\mu_{j}}^{l}\cap L_{g}\cap L_{\overline{1}}$ for some $g\in G$. Then $$\{x,\varphi(x),\ldots,\varphi^{l-1}(x)\}$$ is a family of linearly independent elements of $L$. A similar argument as in (i) shows that $[\varphi^{i}(x),\varphi^{i'}(x)]=0$ for any $i,i'=0,\ldots,l-1.$

Since the fact $l$ is even implies $V_{\mu_{j}}^{l}=V_{-\mu_{j}}^{l}$, we can choose a homogeneous element $0\neq y\in V_{\mu_{j}}^{l}\cap L_{g'}\cap L_{\overline{1}}$ for some $g'\in G$ such that $0\neq[x,y]=\kappa_{j-k}z.$ Then  $$\left\{\frac{\varphi(x)}{\mu_{j}},\frac{\varphi(y)}{\mu_{j}},\cdots,\frac{\varphi^{i}(x)}{\mu_{j}^{i}},\frac{\varphi^{i}(y)}{\mu_{j}^{i}},\cdots,
\frac{\varphi^{l}(x)}{\mu_{j}^{l}},\frac{\varphi^{l}(y)}{\mu_{j}^{l}}\right\}$$ is  a block of type $\overline{B}_{l}^{\mathrm{I}}(u,\kappa_{j-k})$. Now we can write
$$[u,L]=W\oplus \mathrm{Z}_{[u,L]}(W),$$where $W$ and $\mathrm{Z}_{[u,L]}(W)$ are $G$-graded and $\varphi$-invariant.

Case 2. Assume that there exists a nonzero even element  $x\in V_{\mu_{j}}^{l}\cap L_{g}\cap L_{\overline{1}}$ for some $g\in G$ such that $[x,x]\neq0.$  By scaling if necessary, we can assume $[x,x]=z$. We have  $$\{x,\varphi(x),\ldots,\varphi^{l-1}(x)\}$$ is a family of homogeneous linearly independent elements of $L$ satisfying $$[\varphi^{i}(x),\varphi^{i'}(x)]=0$$ if $i+i'\neq l,2l$.
By a direct computation we have $$[\varphi^{i}(x),\varphi^{l-i}(x)]=(-1)^{i}\mu_{j}^{l}[x,x]=(-1)^{i}\mu_{j}^{l}z$$
for any $i=1,\ldots,l.$ Thus the set $$\left\{\frac{\varphi(x)}{\mu_{j}},\cdots,\frac{\varphi^{i}(x)}{\mu_{j}^{i}},\cdots,
\frac{\varphi^{l}(x)}{\mu_{j}^{l}}=x\right\} $$ is a block of type $B_{\frac{l}{2}}^{\mathrm{II}}(u,\kappa_{j-k})$.

 In summary, we can continue  this process on $\mathrm{Z}_{[u,L]}(W)$ until finding all $t+t'$ blocks of type I and $q+q'$ blocks of type II.

(3) Finally, we will find the block of type III. By \cite[Lemma 2]{Cal2}, for each grading $\Gamma$, there is a homogeneous basis
$B=\{p_{1},q_{1},\ldots,p_{s},q_{s},z_{1},\ldots,z_{p}\}$ of $V:=\ker \varphi\cap L_{\overline{1}}$ such that
\begin{eqnarray*}
          && [p_{i},q_{i}]=z,~~~1\leq i\leq s,\\
              && [z_{i},z_{i}]=z,~~~1\leq i\leq p.
               \end{eqnarray*}
               Thus the set $\{p_{1},q_{1},\ldots,p_{s},q_{s},z_{1},\ldots,z_{p}\}$ is a block of type $B_{s,p}^{\mathrm{III}}(u)$.
\end{proof}
Proposition \ref{classification2} provides  a description of all the fine grading on $H_{n,m}^{\lambda,\kappa}$. It is clear that each basis as the one in Eq. (\ref{block}) determines a fine grading, which is denoted by
\begin{equation}\label{block grading}
  \Gamma(l,t,t',q,q',s,p;\beta,\beta',\alpha,\alpha'),
\end{equation}
where $\beta=(\beta_{1},\ldots,\beta_{t})$, $\beta'=(\beta_{1}',\ldots,\beta_{t'}')$, $\alpha=(\alpha_{1},\ldots,\alpha_{q})$ and $\alpha'=(\alpha_{1}',\ldots,\alpha_{q'}').$
If we are in the situation of Proposition \ref{classification2} and take $\xi$ a primitive $l$th root of unit, by Eq. (\ref{spec}), then
\begin{equation}\label{specu}
\begin{split}
\mathrm{Spec}(u)
   &= \{\pm\mathbf{i}\xi^{c}\beta_{j},\pm\mathbf{i}\xi^{c}\beta_{j'}'\mid j=1,\ldots,t, j'=1,\ldots,t',c=1,\ldots,l \}\\
   &\cup \{\pm\mathbf{i}\xi^{d}\alpha_{j},\pm\mathbf{i}\xi^{d}\alpha_{j'}'\mid j=1,\ldots,q, j'=1,\ldots,q', d=1,\ldots,\frac{l}{2}\}.
\end{split}
\end{equation}
Take
\begin{eqnarray*}
   && Y_{i}:=\{\sqrt{\mathbf{i}}\widehat{e_{i}},\frac{e_{i}}{\sqrt{\mathbf{i}}}\},~~~~~~~ Y'_{i}:=\{\mathbf{i}w_{i},-\widehat{w_{i}}\},\\
   && X_{i}:=\{u_{i},v_{i}\},~~~~~~~~~~~ X'_{i}:=\{\mathbf{i}f_{i},\mathbf{i}g_{i}\}.
\end{eqnarray*}
Then $X_{i}$ is a block of type $B_{1}^{\mathrm{I}}(u,\lambda_{i})$ for all $i=1,\ldots,k$, $X'_{i}$ is a block of type $\overline{B}_{1}^{\mathrm{I}}(u,\kappa_{i})$ for all $i=1,\ldots,r$, $Y_{i}$ is a block of type $B_{1}^{\mathrm{II}}(u,\lambda_{i})$ for all $i=1,\ldots,k$, $Y'_{i}$ is a block of type $\overline{B}_{1}^{\mathrm{II}}(u,\kappa_{i})$ for all $i=1,\ldots,r$.
So we have our familiar fine gradings of Section \ref{section first grading} are
\begin{eqnarray*}
   && \Gamma_{1}^{s}=\Gamma(2,0,0,k,r,s,m-2r-2s;\lambda, \kappa), \\
   && \Gamma_{2}^{s}=\Gamma(1,k,r,0,0,s,m-2r-2s;\lambda, \kappa),
\end{eqnarray*}
where $\lambda=(\lambda_{1},\ldots,\lambda_{k})$ and $\kappa=(\kappa_{1},\ldots,\kappa_{r})$.

 Next we will classify all the fine gradings up to equivalence. Similar to \cite[Lemma 5]{Cal2}, we have the following lemma.

\begin{lemma}\label{same block}
Let $L$ be a Lie superalgebra, $z\in \mathrm{Z}(L)$, $u\in L$. Suppose $L$ contains some blocks of types I or II.

  $(\mathrm{1})$ Let $X\subset L$ be a block of type $B_{l}^{\mathrm{I}}(u,\alpha)$ (or $\overline{B}_{l}^{\mathrm{I}}(u,\alpha))$ and $Y\subset L$ be a block of type $B_{l}^{\mathrm{I}}(u,\beta)$ (or $\overline{B}_{l}^{\mathrm{I}}(u,\beta))$, where  $\alpha, \beta\in \mathbb{C}^{\times}$. Then $\langle X\rangle=\langle Y\rangle$  if and only if $(\frac{\alpha}{\beta})^{l}=1$ if $l$
is even and $(\frac{\alpha}{\beta})^{2l}=1$ if $l$ is odd.

$(\mathrm{2})$ Let $X'\subset L$ be a block of type $B_{l}^{\mathrm{II}}(u,\alpha')$ (or $\overline{B}_{l}^{\mathrm{II}}(u,\alpha')$) and $Y'\subset L$ be a block of type $B_{l}^{\mathrm{II}}(u,\beta')$ (or $B_{l}^{\mathrm{II}}(u,\beta')$),  where $\alpha', \beta'\in \mathbb{C}^{\times}$. Then $\langle X'\rangle=\langle Y'\rangle$  if and only if $(\frac{\alpha'}{\beta'})^{l}=1$.
\end{lemma}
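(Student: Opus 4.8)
The plan is to study both subspaces through the single operator $\varphi:=\mathrm{ad}\,u$, exactly as in the analysis preceding Theorem~\ref{classification1} and in the proof of Proposition~\ref{classification2}. The point is that $\langle X\rangle$ and $\langle Y\rangle$ are $\varphi$-invariant subspaces on which $\varphi$ is semisimple, so each decomposes as a direct sum of $\varphi$-eigenspaces, and in addition each carries the $\mathbb{C}z$-valued form $(a,b)\mapsto[a,b]$. I would first record, for a single block, the $\varphi$-eigenvalues. For a block of type $B_{l}^{\mathrm{I}}(u,\alpha)$ the two cyclic chains $\{x_{i}\}$ and $\{y_{i}\}$ are permuted by $\varphi$ with the scalar $-\mathbf{i}\alpha$ and a sign twist on the $y$-chain, so $\mathrm{Spec}(\varphi|_{\langle X\rangle})=\{-\mathbf{i}\alpha\xi^{c}\}\cup\{\mathbf{i}\alpha\xi^{c}\}$ with $\xi$ a primitive $l$th root of unity; for a block of type $B_{l}^{\mathrm{II}}(u,\alpha')$ the single chain of length $2l$ gives $\mathrm{Spec}(\varphi|_{\langle X'\rangle})=\{-\mathbf{i}\alpha'\omega:\omega^{2l}=1\}$, i.e.\ the full set of $2l$th roots of $(-\mathbf{i}\alpha')^{2l}$.

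For the ``only if'' direction I would argue that $\langle X\rangle=\langle Y\rangle$ forces $\varphi|_{\langle X\rangle}=\varphi|_{\langle Y\rangle}$, hence the two spectra above coincide. Comparing them yields the stated arithmetic: for $l$ even the type~I spectra agree iff $(\mathbf{i}\alpha)^{l}=(\mathbf{i}\beta)^{l}$, i.e.\ $(\alpha/\beta)^{l}=1$; for $l$ odd the $2l$ eigenvalues $\{\pm\mathbf{i}\alpha\xi^{c}\}$ are pairwise distinct (as $-1$ is not a power of the odd-order $\xi$) and equal to $\{\pm\mathbf{i}\beta\xi^{c}\}$ iff $(\alpha/\beta)^{2l}=1$; for type~II the spectra agree iff $(\alpha'/\beta')^{2l}=1$, which I then must sharpen to $(\alpha'/\beta')^{l}=1$ by invoking the $\mathbb{C}z$-valued form, since the bracket $[x_{i},x_{2l-i+1}]=(-1)^{i}\alpha' z$ carries an additional, scale-invariant multiple of the parameter that the even-degree spectral data alone cannot see.

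For the ``if'' direction I would reconstruct the subspace from the parameter class by identifying $\langle X\rangle$ with the canonical $\varphi$-invariant subspace attached to the relevant eigenvalues, built from the subspaces $V_{\mu}^{l}=\ker(\varphi^{l}-\mu^{l}\,\mathrm{id})$ of Eq.~(\ref{eigenspace}); since $V_{\mu}^{l}$ depends only on $\mu^{l}$, the root-of-unity hypothesis makes the associated subspaces for $X$ and $Y$ coincide, and a dimension count ($\dim\langle X\rangle=\dim\langle Y\rangle=2l$) closes the argument. The main obstacle, and the step that requires the ambient structure of $H_{n,m}^{\lambda,\kappa}$ (through Proposition~\ref{classification2}) rather than the bare block relations, is precisely this identification: one must show that a block exhausts the full $\varphi$-eigenspaces over its spectrum, so that its span is determined by the parameter class alone. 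The careful bookkeeping separating the even-$l$, odd-$l$ and type~II cases, and in particular extracting from the $z$-form the finer exponent $l$ for type~II where the spectrum only delivers $2l$, is where the delicate part of the proof lies.
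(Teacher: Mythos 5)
The paper offers no proof of this lemma at all (it only says ``Similar to \cite[Lemma 5]{Cal2}''), so your argument has to stand on its own. Your spectral bookkeeping is correct --- $\mathrm{Spec}(\mathrm{ad}\,u)$ on a type I block is $\{\pm\mathbf{i}\alpha\xi^{c}\}$ and on a $2l$-element type II chain is the full set of $2l$th roots of $(-\mathbf{i}\alpha')^{2l}$ --- and the ``only if'' direction for type I does follow from comparing spectra exactly as you describe. But two steps you flag as ``to be done'' are genuinely broken.

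First, the ``if'' direction. You propose to identify $\langle X\rangle$ with the canonical $\varphi$-invariant subspace attached to its eigenvalues and close by a dimension count, conceding that the key step is that ``a block exhausts the full $\varphi$-eigenspaces over its spectrum.'' That step is false: in the gradings of Eq.~(\ref{scalar chosen}) a parameter $\delta_{i}$ occurs in $m_{i}$ distinct, mutually centralizing blocks, so $V_{\mu}^{l}$ meets $[u,L]$ in a subspace of dimension $2lm_{i}$ and the span of one block is not determined by its spectrum. The correct ``if'' argument needs no ambient structure and no appeal to Proposition~\ref{classification2}: with $\omega=\alpha/\beta$ of the appropriate order, set $x_{i}'=\omega^{i-1}x_{i}$ and $y_{i}'=c\,\omega^{i-1}y_{i}$ inside $\langle X\rangle$, verify that the chains close (this is exactly where $\omega^{l}=1$, resp.\ $\omega^{2l}=1$, is used) and choose the scalar $c$ so that the $z$-valued brackets are normalized correctly.

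Second, the type II ``sharpening'' does not exist. For any $\omega$ with $\omega^{2l}=1$, putting $y_{i}=\omega^{i-1}x_{i}$ gives $[u,y_{i}]=-\mathbf{i}(\alpha'/\omega)\,y_{i+1}$ (indices mod $2l$) and $[y_{i},y_{2l-i+1}]=\omega^{2l-1}(-1)^{i}\alpha' z=(-1)^{i}(\alpha'/\omega)z$, i.e.\ a bona fide block of type $B_{l}^{\mathrm{II}}(u,\alpha'/\omega)$ spanning $\langle X'\rangle$. So the $\mathbb{C}z$-valued form only fixes the overall scalar up to sign and imposes no condition on $\omega$ beyond $\omega^{2l}=1$; the extra invariant you hope to extract from it is not there, and no argument along these lines can produce the exponent $l$ for a $2l$-element chain. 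The way the lemma is actually used later (Theorem~\ref{equivalent} applies it to blocks $B_{l/2}^{\mathrm{II}}$ and tests membership in $\widehat{\alpha}$, the $l$th-root-of-unity class) is consistent with the exponent being the number of elements of the chain, i.e.\ $2m$ for a block $B_{m}^{\mathrm{II}}$; you should prove that statement rather than chase a refinement the structure does not support.
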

From now on $\xi,\zeta\in \mathbb{C}$ will denote some fixed $l$th and $2l$th primitive roots of unit, respectively.
Let
\begin{eqnarray*}
   && \widehat{\alpha}:=\{\alpha\xi^{t}: t=0,\ldots,l-1\}, \\
   && \widetilde{\alpha}:=\{\alpha\zeta^{t}: t=0,\ldots,2l-1\},
\end{eqnarray*}
where $\alpha\in \mathbb{C}$.

Now we are able to classify all the fine gradings up to equivalencefor twisted Heisenberg superalgebras.

\begin{theorem}\label{equivalent}
$(\mathrm{1})$ A grading $\Gamma$ on $H_{n,m}^{\lambda,\kappa}$ is fine if and only if $\Gamma$ is equivalent to $$\Gamma(l,t,t',q,q',s,p;\beta,\beta',\alpha,\alpha')$$
for some $l,t,t',q,q',s,p\in \mathbb{Z}_{\geq0}$ such that
$$l(q+q'+2t+2t')+2s+p=n+m-1$$ and for some $\beta=(\beta_{1},\ldots,\beta_{t})$, $\beta'=(\beta_{1}',\ldots,\beta_{t'}')$, $\alpha=(\alpha_{1},\ldots,\alpha_{q})$, $\alpha'=(\alpha_{1}',\ldots,\alpha_{q'}')$  such that Eq. (\ref{specu}) holds, with $l$ even
if $q\neq 0$ or $q'\neq 0.$

$(\mathrm{2})$ If $l$ is odd, two fine gradings on $H_{n,m}^{\lambda,\kappa}$, $$\Gamma=\Gamma(l,t,t',q,q',s,p;\beta,\beta',\alpha,\alpha'),$$ where $\beta=(\beta_{1},\ldots,\beta_{t})$, $\beta'=(\beta_{1}',\ldots,\beta_{t'}')$, $\alpha=(\alpha_{1},\ldots,\alpha_{q})$, $\alpha'=(\alpha_{1}',\ldots,\alpha_{q'}')$ and $$\overline{\Gamma}=\Gamma(\overline{l},\overline{t},\overline{t}',\overline{q},\overline{q}',\overline{s},\overline{p};
\overline{\beta},\overline{\beta}',
\overline{\alpha},\overline{\alpha}'),$$ where $\overline{\beta}=(\overline{\beta}_{1},\ldots,\overline{\beta}_{\overline{t}})$, $\overline{\beta}'=(\overline{\beta}_{1}',\ldots,\overline{\beta}_{\overline{t}'}')$, $\overline{\alpha}=(\overline{\alpha}_{1},\ldots,\overline{\alpha}_{\overline{q}})$, $\overline{\alpha}'=(\overline{\alpha}_{1}',\ldots,\overline{\alpha}_{\overline{q}'}')$,
are equivalent if and only if $l=\overline{l}$, $t=\overline{t}$, $t'=\overline{t}'$, $q=\overline{q}$, $q'=\overline{q}'$, $s=\overline{s}$, $p=\overline{p}$
and there are $\varepsilon\in \mathbb{C}^{\times}$, $\eta\in S_{t}$, $\eta'\in S_{t'}$, $\sigma\in S_{q}$, $\sigma'\in S_{q'}$ such that for all $j=1,\ldots,t$,
$j'=1,\ldots,t'$, $i=1,\ldots,q$, $i'=1,\ldots,q'$,
$$\widehat{\varepsilon\overline{\beta}_{j}}=\widehat{\beta_{\eta(j)}},~~~
\widehat{\varepsilon\overline{\beta}_{j'}'}=\widehat{\beta'_{\eta'(j')}},~~~
  \widehat{\varepsilon\overline{\alpha}_{i}}=\widehat{\alpha_{\sigma(i)}},~~~
  \widehat{\varepsilon\overline{\alpha}'_{i'}}=\widehat{\alpha'_{\sigma'(i')}}.$$

$(\mathrm{3})$ If $l$ is even, two fine gradings on $H_{n,m}^{\lambda,\kappa}$, $$\Gamma=\Gamma(l,t,t',q,q',s,p;\beta,\beta',\alpha,\alpha'),$$ where $\beta=(\beta_{1},\ldots,\beta_{t})$, $\beta'=(\beta_{1}',\ldots,\beta_{t'}')$, $\alpha=(\alpha_{1},\ldots,\alpha_{q})$, $\alpha'=(\alpha_{1}',\ldots,\alpha_{q'}')$ and $$\overline{\Gamma}=\Gamma(\overline{l},\overline{t},\overline{t}',\overline{q},\overline{q}',\overline{s},\overline{p};
\overline{\beta},\overline{\beta}',
\overline{\alpha},\overline{\alpha}'),$$ where $\overline{\beta}=(\overline{\beta}_{1},\ldots,\overline{\beta}_{\overline{t}})$, $\overline{\beta}'=(\overline{\beta}_{1}',\ldots,\overline{\beta}_{\overline{t}'}')$, $\overline{\alpha}=(\overline{\alpha}_{1},\ldots,\overline{\alpha}_{\overline{q}})$, $\overline{\alpha}'=(\overline{\alpha}_{1}',\ldots,\overline{\alpha}_{\overline{q}'}')$,
are equivalent if and only if $l=\overline{l}$, $t=\overline{t}$, $t'=\overline{t}'$, $q=\overline{q}$, $q'=\overline{q}'$, $s=\overline{s}$, $p=\overline{p}$
and there are $\varepsilon\in \mathbb{C}^{\times}$, $\eta\in S_{t}$, $\eta'\in S_{t'}$, $\sigma\in S_{q}$, $\sigma'\in S_{q'}$ such that for all $j=1,\ldots,t$,
$j'=1,\ldots,t'$, $i=1,\ldots,q$, $i=1,\ldots,q'$,
$$\widetilde{\varepsilon\overline{\beta}_{j}}=\widetilde{\beta_{\eta(j)}},~~~
\widetilde{\varepsilon\overline{\beta}_{j'}'}=\widetilde{\beta_{\eta(j')}'},~~~
\widehat{\varepsilon\overline{\alpha}_{i}}=\widehat{\alpha_{\sigma(i)}},~~~
\widehat{\varepsilon\overline{\alpha}'_{i'}}=\widehat{\alpha'_{\sigma'(i')}}.$$
\end{theorem}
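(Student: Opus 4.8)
\emph{Overall strategy.} The plan is to derive all three parts from the block decomposition of Proposition \ref{classification2}, which already furnishes, for any grading, a homogeneous basis as in Eq. (\ref{block}) split into blocks of types I, II and III attached to a homogeneous element $u$, together with the spectral description Eq. (\ref{specu}). For Part (1) I would first treat the ``if'' direction: given data $(l,t,t',q,q',s,p;\beta,\beta',\alpha,\alpha')$ compatible with Eq. (\ref{specu}), the basis in Eq. (\ref{block}) realizes $L$, and reading off degrees exactly as in the two motivating examples (Eq. (\ref{grading example1}) and the type II example) shows that every homogeneous component of the grading Eq. (\ref{block grading}) is one-dimensional; a grading all of whose components are one-dimensional admits no proper refinement and is therefore fine. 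For the ``only if'' direction, apply Proposition \ref{classification2} to an arbitrary fine grading $\Gamma$: the block basis exhibits $\Gamma$ as a coarsening of some $\Gamma(l,\ldots)$, and since both are fine they coincide. The requirement that $l$ be even whenever $q\neq0$ or $q'\neq0$ is precisely the dichotomy recorded in the proof of Proposition \ref{classification2} (type II blocks arise only in the $l$ even case), and Eq. (\ref{specu}) is forced because $\mathrm{Spec}(u)$ is computed from the original $\lambda,\kappa$ through Eq. (\ref{spec}).

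\emph{Necessity in Parts (2) and (3).} Let $\phi$ be an equivalence from $\Gamma$ to $\overline{\Gamma}$. Since $\langle z\rangle$ is the even one-dimensional center, $\phi(z)\in\langle z\rangle$; and since by Lemma \ref{homo basis} $u$ and $\overline{u}$ both span the unique even direction transverse to $[H_{n,m}^{\lambda,\kappa},H_{n,m}^{\lambda,\kappa}]=H_{n,m}$, the $\overline{\Gamma}$-homogeneous element $\phi(u)$ satisfies $\phi(u)\equiv\varepsilon\overline{u}\pmod{H_{n,m}}$ for a unique $\varepsilon\in\mathbb{C}^{\times}$. Conjugating $\mathrm{ad}\,u$ by $\phi$ and using that $\mathrm{ad}$ of an element of the nilpotent ideal $H_{n,m}$ is nilpotent with image in $\langle z\rangle$, the nonzero part of the spectrum rescales by this single scalar, giving $\mathrm{Spec}(u)=\varepsilon\,\mathrm{Spec}(\overline{u})$. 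Because the order $l$ of $\deg u$ is read off intrinsically from the pattern of eigenvalues of $\mathrm{ad}\,u$ via Eq. (\ref{spec}), this forces $l=\overline{l}$. As $\phi$ is a superalgebra automorphism it preserves parity, hence sends even blocks to even blocks and odd blocks to odd blocks, and since the defining brackets distinguish type I from type II internally it preserves the block type. Comparing the block decompositions of $\mathrm{Spec}(u)$ and $\mathrm{Spec}(\overline{u})$ in Eq. (\ref{specu}) then yields $t=\overline{t}$, $t'=\overline{t}'$, $q=\overline{q}$, $q'=\overline{q}'$, whence $s=\overline{s}$ and $p=\overline{p}$ from the dimension identity $l(q+q'+2t+2t')+2s+p=n+m-1$. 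Matching the individual block parameters up to $\varepsilon$ and a permutation of like blocks, through Lemma \ref{same block}, produces the stated root-of-unity relations $\widehat{\varepsilon\overline{\beta}_{j}}=\widehat{\beta_{\eta(j)}}$ (and the analogues) in the odd case and the $\widetilde{\,\cdot\,}$ relations for type I in the even case.

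\emph{Sufficiency in Parts (2) and (3).} Conversely, given $\varepsilon$ together with permutations $\eta,\eta',\sigma,\sigma'$ satisfying the listed identities, I would construct $\phi$ block by block. Set $\phi(z)$ equal to the forced multiple of $z$ and $\phi(u)=\varepsilon\overline{u}$, and on each block of $\Gamma$ use the root-of-unity relation guaranteed by the matching condition, exactly as it is realized concretely in Lemma \ref{same block} and in the block examples, to define a linear isomorphism onto the $\eta$- (respectively $\sigma$-)paired block of $\overline{\Gamma}$ that respects all defining brackets. Assembling these block isomorphisms with a permutation of the type III generators by $S_{s}\times S_{p}$ gives a superalgebra automorphism sending each homogeneous component of $\Gamma$ onto one of $\overline{\Gamma}$, that is, an equivalence.

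\emph{Main obstacle.} The delicate step is the necessity argument: one must determine \emph{exactly} which eigenvalue rescalings $\mu\mapsto\varepsilon\mu$ are actually realized by honest automorphisms of $H_{n,m}^{\lambda,\kappa}$, and here the parity of $l$ genuinely intervenes. The twist $(-1)^{l}$ in the last bracket of a type I block makes the $x$- and $y$-strands fill out different sets of roots of unity according as $l$ is odd or even, so that the ``same subspace'' criterion of Lemma \ref{same block} does not coincide verbatim with ``related by an automorphism'': the extra symmetries available in one parity (visible in the Weyl-group computations of the block examples) are precisely what flip the condition between the $\widehat{\,\cdot\,}$ form of Part (2) and the $\widetilde{\,\cdot\,}$ form of Part (3). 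Keeping this root-of-unity bookkeeping consistent across the two parities, and verifying that no further identifications are introduced, is the crux of the proof.
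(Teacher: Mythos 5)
Your treatment of Part (1) is essentially the paper's: the ``only if'' direction is exactly the appeal to Proposition \ref{classification2}, and the ``if'' direction is the paper's explicit listing of degrees (Eqs. (\ref{universal grading group1}) and (\ref{universal grading group2})), with fineness following because all components are one-dimensional. For Parts (2) and (3) the paper gives no argument at all beyond citing \cite[Theorem 5]{Cal2} and Lemma \ref{same block}, so your sketch --- normalize $\phi(z)$ and $\phi(u)$, match blocks type-by-type and parity-by-parity, and convert the block matching into the root-of-unity conditions via Lemma \ref{same block} --- is a reasonable reconstruction of the intended proof and goes beyond what the paper writes down.

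There is, however, one genuinely incorrect step in your necessity argument: the claim that $l$ ``is read off intrinsically from the pattern of eigenvalues of $\mathrm{ad}\,u$ via Eq. (\ref{spec}).'' It is not. The gradings $\Gamma_{1}^{s}$ and $\Gamma_{2}^{s}$ of Section \ref{section first grading} have identical $\mathrm{Spec}(u)=\{\pm\mu_{1},\ldots,\pm\mu_{k+r}\}$ (for $l=2$ the set $\{\pm\xi^{j}\mu_{i}\}$ collapses to $\{\pm\mu_{i}\}$), yet they have $l=2$ and $l=1$ respectively and are inequivalent; so the spectrum alone cannot force $l=\overline{l}$, and your subsequent ``comparison of the block decompositions of $\mathrm{Spec}(u)$'' inherits the same ambiguity, since the splitting in Eq. (\ref{specu}) is not unique in $l$. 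The correct route is structural rather than spectral: every homogeneous component of $\overline{\Gamma}$ other than $\langle\overline{u}\rangle$ lies in $[L,L]=H_{n,m}$, so the $\overline{\Gamma}$-homogeneous element $\phi(u)\notin[L,L]$ must satisfy $\phi(\langle u\rangle)=\langle\overline{u}\rangle$ exactly (not merely modulo $H_{n,m}$); since the gradings are universal, the support bijection induced by $\phi$ extends to an isomorphism of universal grading groups carrying $\deg u$ to $\deg\overline{u}$, and $l=\overline{l}$ follows because $l$ is the order of $\deg u$. With that replacement the rest of your argument (blocks are characterized as the $\mathrm{ad}\,u$-orbits of components together with their bracket into $z$, hence are permuted by $\phi$ with type and parity preserved, giving $t=\overline{t}$, etc., and then Lemma \ref{same block} yields the scalar conditions) goes through as you describe.
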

\begin{proof}
We only give the proof of (1). Similar to \cite[Theorem 5]{Cal2}, by Lemma \ref{same block}, we have (2) and (3).

 The fact that any fine grading is like Eq. (\ref{block grading}) has been proved in Proposition \ref{classification2}. For the converse, recorder
\begin{eqnarray*}
   && \lambda=(\xi\beta_{1},\ldots,\xi^{l}\beta_{1},\ldots,\xi\beta_{t},\ldots,\xi^{l}\beta_{t},\xi\alpha_{1},\ldots,\xi^{\frac{l}{2}}\alpha_{1},\ldots,
   \xi\alpha_{q},\ldots,\xi^{\frac{l}{2}}\alpha_{q}), \\
   && \kappa= (\xi\beta_{1}',\ldots,\xi^{l}\beta_{1}',\ldots,\xi\beta_{t'}',\ldots,\xi^{l}\beta_{t'}',\xi\alpha_{1}',\ldots,\xi^{\frac{l}{2}}\alpha_{1}',\ldots,
   \xi\alpha_{q'}',\ldots,\xi^{\frac{l}{2}}\alpha_{q'}'),
\end{eqnarray*}
which are possible because of Eq. (\ref{specu}).  Take $\{x_{1}^{j},y_{1}^{j},\ldots,x_{l}^{j},y_{l}^{j}\}$  to be a block of type $B_{l}^{\mathrm{I}}(u,\beta_{j})$ for each $j\leq t$ and $\{\overline{x}_{1}^{j},\overline{y}_{1}^{j},\ldots,\overline{x}_{l}^{j},\overline{y}_{l}^{j}\}$ a block of type $\overline{B}_{l}^{\mathrm{I}}(u,\beta_{j}')$ for each $j\leq t'$. Take $\{a_{1}^{j},\ldots,a_{l}^{j}\}$ to be a block of type $B_{\frac{l}{2}}^{\mathrm{II}}(u,\alpha_{j})$ for each $j\leq q$ and $\{\overline{a}_{1}^{j},\ldots,\overline{a}_{l}^{j}\}$ a block of type $\overline{B}_{\frac{l}{2}}^{\mathrm{II}}(u,\alpha_{j}')$ for each $j\leq q'$. Take
 $\{p_{1},q_{1},\ldots,p_{s},q_{s},z_{1},\ldots,z_{m-2r-2s}\}$ to be a block of type $B_{s,p}^{\mathrm{III}}(u)$, where $p=m-2r-2s.$

If $l$ is even,  the union of these blocks  determines a fine $G$-grading, which is given by
\begin{equation}\label{universal grading group1}
\begin{split}
   & \mathrm{deg}x_{i}^{j}=(1;\overline{2i+2};0,\ldots,1,\ldots,0;0,\ldots,0;\overline{0},\ldots,\overline{0};\overline{0},\ldots,\overline{0};0,\ldots,0;\overline{0},\ldots,\overline{0}),\\
   & \mathrm{deg}y_{i}^{j}=(1;\overline{2i};0,\ldots,-1,\ldots,0;0,\ldots,0;\overline{0},\ldots,\overline{0};\overline{0},\ldots,\overline{0};0,\ldots,0;\overline{0},\ldots,\overline{0}),  \\
   & \mathrm{deg}\overline{x}_{i}^{j}=(1;\overline{2i+2};0,\ldots,0;0,\ldots,1,\ldots,0;\overline{0},\ldots,\overline{0};\overline{0},\ldots,\overline{0};0,\ldots,0;\overline{0},\ldots,\overline{0}),  \\
   & \mathrm{deg}\overline{y}_{i}^{j}=(1;\overline{2i};0,\ldots,0;0,\ldots,-1,\ldots,0;\overline{0},\ldots,\overline{0};\overline{0},\ldots,\overline{0};0,\ldots,0;\overline{0},\ldots,\overline{0}),  \\
   & \mathrm{deg}a_{i}^{j}=(1;\overline{2i};0,\ldots,0;0,\ldots,0;\overline{0},\ldots,\overline{1},\ldots,\overline{0};\overline{0},\ldots,\overline{0};0,\ldots,0;\overline{0},\ldots,\overline{0}),  \\
   & \mathrm{deg}\overline{a}_{i}^{j}=(1;\overline{2i+1};0,\ldots,0;0,\ldots,0;\overline{0},\ldots,\overline{0};\overline{0},\ldots,\overline{1},\ldots,\overline{0};0,\ldots,0;\overline{0},\ldots,\overline{0}),  \\
   & \mathrm{deg}p_{i}=(1;\overline{1};0,\ldots,0;0,\ldots,0;\overline{0},\ldots,\overline{0};\overline{0},\ldots,\overline{0};0,\ldots,1,\ldots,0;\overline{0},\ldots,\overline{0}),  \\
   & \mathrm{deg}q_{i}=(1;\overline{1};0,\ldots,0;0,\ldots,0;\overline{0},\ldots,\overline{0};\overline{0},\ldots,\overline{0};0,\ldots,-1,\ldots,0;\overline{0},\ldots,\overline{0}),  \\
   & \mathrm{deg}z_{i}=(1;\overline{1};0,\ldots,0;0,\ldots,0;\overline{0},\ldots,\overline{0};\overline{0},\ldots,\overline{0};0,\ldots,0;\overline{0},\ldots,\overline{1},\ldots,\overline{0}),  \\
   & \mathrm{deg}u=(0;\overline{2};0,\ldots,0;0,\ldots,0;\overline{0},\ldots,\overline{0};\overline{0},\ldots,\overline{0};0,\ldots,0;\overline{0},\ldots,\overline{0}),  \\
   &  \mathrm{deg}z=(2;\overline{2};0,\ldots,0;0,\ldots,0;\overline{0},\ldots,\overline{0};\overline{0},\ldots,\overline{0};0,\ldots,0;\overline{0},\ldots,\overline{0}),
\end{split}
\end{equation}
where $G=\mathbb{Z}\times \mathbb{Z}_{2l}\times \mathbb{Z}^{t+t'}\times \mathbb{Z}_{2}^{q+q'}\times \mathbb{Z}^{s}\times \mathbb{Z}_{2}^{p}$.

If $l$ is odd,
there are no blocks of type II. Now we have a fine $G'$-grading, which is given by
\begin{equation}\label{universal grading group2}
\begin{split}
   & \mathrm{deg}x_{i}^{j}=(1;\overline{i+2};0,\ldots,1,\ldots,0;0,\ldots,0;0,\ldots,0;\overline{0},\ldots,\overline{0}),\\
   & \mathrm{deg}y_{i}^{j}=(1;\overline{i};0,\ldots,-1,\ldots,0;0,\ldots,0;0,\ldots,0;\overline{0},\ldots,\overline{0}),  \\
   & \mathrm{deg}\overline{x}_{i}^{j}=(1;\overline{i+2};0,\ldots,0;0,\ldots,1,\ldots,0;0,\ldots,0;\overline{0},\ldots,\overline{0}),  \\
   & \mathrm{deg}\overline{y}_{i}^{j}=(1;\overline{i};0,\ldots,0;0,\ldots,-1,\ldots,0;0,\ldots,0;\overline{0},\ldots,\overline{0}),  \\
   & \mathrm{deg}p_{i}=(1;\overline{1};0,\ldots,0;0,\ldots,0;0,\ldots,1,\ldots,0;\overline{0},\ldots,\overline{0}),  \\
   & \mathrm{deg}q_{i}=(1;\overline{1};0,\ldots,0;0,\ldots,0;0,\ldots,-1,\ldots,0;\overline{0},\ldots,\overline{0}),  \\
   & \mathrm{deg}z_{i}=(1;\overline{1};0,\ldots,0;0,\ldots,0;0,\ldots,0;\overline{0},\ldots,\overline{1},\ldots,\overline{0}),  \\
   & \mathrm{deg}u=(0;\overline{1};0,\ldots,0;0,\ldots,0;0,\ldots,0;\overline{0},\ldots,\overline{0}),  \\
   &  \mathrm{deg}z=(2;\overline{2};0,\ldots,0;0,\ldots,0;0,\ldots,0;\overline{0},\ldots,\overline{0}),
\end{split}
\end{equation}
where $G'=\mathbb{Z}\times \mathbb{Z}_{l}\times \mathbb{Z}^{t+t'}\times \mathbb{Z}^{s}\times \mathbb{Z}_{2}^{p}$.
\end{proof}
By Eq. (\ref{universal grading group1}) and Eq. (\ref{universal grading group2}), we can get the universal grading group of any fine grading on $H_{n,m}^{\lambda,\kappa}$.
\begin{corollary}
The universal grading group of any fine grading $$\Gamma=\Gamma(l,t,t',q,q',s,p;\beta,\beta',\alpha,\alpha')$$  on $H_{n,m}^{\lambda,\kappa}$, is
\begin{eqnarray*}
  \mathbb{Z}^{1+t+t'+s}\times\mathbb{Z}_{2l}\times \mathbb{Z}_{2}^{q+q'+p} &&\hbox{if l is even}, \\
  \mathbb{Z}^{1+t+t'+s}\times\mathbb{Z}_{l}\times \mathbb{Z}_{2}^{p} &&\hbox{if l is odd}.
\end{eqnarray*}
\end{corollary}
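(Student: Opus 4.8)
The universal grading group $\widetilde{G}$ is, by the construction recalled in Section~\ref{ns}, the abelian group generated by the support of $\Gamma$ subject to the relations $\mathrm{deg}\,a+\mathrm{deg}\,b=\mathrm{deg}\,c$ read off from each nonzero bracket $[a,b]=(\text{scalar})c$. For a fine grading every homogeneous component is one-dimensional, so the generators are exactly the degrees of the block basis of Eq.~(\ref{block}) furnished by Proposition~\ref{classification2}. The plan is to write this presentation down explicitly and bring it to normal form, checking the outcome against the concrete realizations already obtained in the proof of Theorem~\ref{equivalent}, namely Eq.~(\ref{universal grading group1}) for $l$ even and Eq.~(\ref{universal grading group2}) for $l$ odd.

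Setting $h:=\mathrm{deg}\,u$ and $c:=\mathrm{deg}\,z$, I would first use the relations $[u,\cdot]$ inside each block to express every degree through $h$, $c$, and a single leading degree per block. The cyclic relations $[u,x_l]=-\mathbf{i}\beta x_1$ and $[u,y_l]=-(-1)^l\mathbf{i}\beta y_1$ give $lh=0$, with the sign $-(-1)^l$ recording the parity of $l$; the cross brackets of a block of type~I determine its $y$-leading degree from the $x$-leading one, so each block of type~I (even or odd) contributes one unconstrained generator. The remaining brackets give $2\,\mathrm{deg}\,a_1=c+h$ for each block of type~II, together with $\mathrm{deg}\,p_i+\mathrm{deg}\,q_i=c$ and $2\,\mathrm{deg}\,z_i=c$ for the block of type~III. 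This leaves a presentation on $h$, $c$, the $t+t'$ type~I leading degrees, the $q+q'$ type~II leading degrees, the degrees $\mathrm{deg}\,p_i$, and the degrees $\mathrm{deg}\,z_i$, subject only to $lh=0$ and the displayed relations $2(\cdot)=c$ and $2(\cdot)=c+h$.

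A rank count then isolates the free part: the type~I leading degrees and the $\mathrm{deg}\,p_i$ are unconstrained, and after accounting for the $q+q'+p$ relations of the form $2(\cdot)=c$ or $2(\cdot)=c+h$ exactly one free direction survives in $\langle h,c,\ldots\rangle$, giving total free rank $1+t+t'+s$. The torsion is read off from the same relations: the order-two identifications produce the factors $\mathbb{Z}_2^{q+q'+p}$, while $h$ (of order $l$) together with the degrees whose $\mathbb{Z}_{2l}$-coordinate in Eq.~(\ref{universal grading group1}) is odd assemble into the cyclic summand, which is $\mathbb{Z}_{2l}$ when $l$ is even and collapses to $\mathbb{Z}_l$ when $l$ is odd. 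I would confirm these orders termwise against the explicit degrees of Eqs.~(\ref{universal grading group1}) and (\ref{universal grading group2}), or equivalently by the Smith normal form of the integer relation matrix.

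The main obstacle is the torsion, and specifically the assertion that the cyclic factor has order $2l$ rather than $l$ in the even case. The element of order $2l$ is not $h=\mathrm{deg}\,u$, which has order only $l$, but a difference of two support degrees sharing the same first coordinate whose $\mathbb{Z}_{2l}$-entries differ by an odd integer (for instance a type~III degree against a type~I leading degree). The crux is to verify that such a class genuinely lies in the support-generated group, has order exactly $2l$, and that the $q+q'+p$ order-two classes remain independent modulo it; this is precisely the point where the even/odd dichotomy of the stated answer is decided, and I would settle it by the Smith-normal-form reduction cross-checked against Eqs.~(\ref{universal grading group1}) and (\ref{universal grading group2}).
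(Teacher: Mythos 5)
Your strategy---presenting the universal group by the support degrees modulo the relations forced by the nonzero brackets and reducing to Smith normal form---is the honest way to verify this statement, and it is genuinely more than the paper does: the paper's entire proof is to read the groups off from the explicit realizations in Eqs.~(\ref{universal grading group1}) and (\ref{universal grading group2}). But the step you yourself flag as the crux, the torsion count, is where the argument fails. The relations you correctly isolate are $lh=0$, $2\deg a_1^{j}=c+h$ (even type II), $2(\deg \overline a_1^{j}-h)=c$ (odd type II), $2\deg z_i=c$ and $\deg p_i+\deg q_i=c$ (type III). In the group so presented, the $q'+p$ generators $X$ with $2X=c$ do \emph{not} each contribute a $\mathbb{Z}_2$: one of them is consumed in writing $c=2X_1$ (so $X_1$ becomes the free generator carrying $c$, which is where your ``one surviving free direction'' comes from), and, when $q\geq 1$, a second is consumed in writing $h=2(A_1-X_1)$, which is the genuine order-$2l$ class. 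The Smith reduction therefore gives torsion $\mathbb{Z}_{2l}\times\mathbb{Z}_2^{\,q+q'+p-2}$ when $q\geq1$ and $q'+p\geq1$, only $\mathbb{Z}_l\times\mathbb{Z}_2^{\,q+q'+p-1}$ when one of these vanishes, and $\mathbb{Z}_l\times\mathbb{Z}_2^{\,p-1}$ in the odd case with $p\geq1$---not the $\mathbb{Z}_2^{\,q+q'+p}$ (resp.\ $\mathbb{Z}_2^{\,p}$) asserted. Incidentally, your candidate order-$2l$ element, ``a type III degree against a type I leading degree,'' has infinite order, since type I leading degrees are free generators; the correct choice is an even type II leading degree against a type III (or odd type II) degree.

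The cross-check you propose against Eqs.~(\ref{universal grading group1}) and (\ref{universal grading group2}) cannot rescue the count, because those groups are not generated by their supports: the homomorphism to $\mathbb{Z}_2$ sending a degree to its first coordinate plus the sum of all its coordinates other than the cyclic one, reduced mod $2$, kills every listed support degree but is surjective, so for instance $(0;\overline0;\dots;\overline1,\overline0,\dots,\overline0)$ is not an integral combination of support degrees. The subgroup actually generated agrees with the presented group, i.e.\ with the smaller torsion above. This is even visible inside the paper: $\Gamma_2^{\frac{m-1}{2}-r}$ is $\Gamma(1,k,r,0,0,s,1;\dots)$ with $l=1$, $p=1$, and Section~\ref{section first grading} records its universal (toral, hence torsion-free) grading group as $\mathbb{Z}^{1+k+r+s}$, whereas the Corollary would give $\mathbb{Z}^{1+k+r+s}\times\mathbb{Z}_2$. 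So the difficulty is not merely one of exposition: carried out correctly, your computation contradicts the stated Corollary, and the statement (together with the claim that the groups in Eqs.~(\ref{universal grading group1})--(\ref{universal grading group2}) are universal) needs to be corrected rather than re-derived.
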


By Theorem \ref{equivalent}, when one wants to know how many gradings are on a particular twisted Heisenberg superalgebras $H_{n,m}^{\lambda,\kappa}$, it is enough to see how many ways are of splitting $$\{\pm\lambda_{1},\ldots,\pm\lambda_{k},\pm \kappa_{1},\ldots,\pm \kappa_{r}\}$$ in the way described in Eq. (\ref{specu}).
\begin{example}
Let us compute how many fine grading are there on $L=H_{6,4}^{(1,1;\mathbf{i})}$. As $l$ must divide $4$ and $1$ or $-1$ is a primitive $l$th root of  unit (recall Eq. (\ref{spec})), the possibilities are
\begin{itemize}
  \item $l=1$ with $(t,t',q,q',s,p)=(2,1,0,0,1,0)$ or $(2,1,0,0,0,2)$
  \item $l=2$ with $(t,t',q,q',s,p)=(1,0,0,1,1,0)$, $(1,0,0,1,0,2)$, $(0,0,2,1,1,0)$ or $(0,0,2,1,0,2)$.
\end{itemize}
So we have six fine gradings:
\begin{itemize}
  \item  $\mathbb{Z}^{5}$-grading $\Gamma(1,2,1,0,0,1,0;1,1,\mathbf{i})$ (the only toral fine grading, $\Gamma_{2}^{1}$);
  \item  $\mathbb{Z}^{4}\times \mathbb{Z}_{2}^{2}$-grading $\Gamma(1,2,1,0,0,0,2;1,1,\mathbf{i})$;
  \item  $\mathbb{Z}^{3}\times\mathbb{Z}_{4}\times \mathbb{Z}_{2}$-grading $\Gamma(2,1,0,0,1,1,0;1,\mathbf{i})$;
  \item  $\mathbb{Z}^{2}\times\mathbb{Z}_{4}\times \mathbb{Z}_{2}^{3}$-grading $\Gamma(2,1,0,0,1,0,2;1,\mathbf{i})$;
  \item  $\mathbb{Z}^{2}\times\mathbb{Z}_{4}\times \mathbb{Z}_{2}^{3}$-grading $\Gamma(2,0,0,2,1,1,0;1,1,\mathbf{i})$;
  \item  $\mathbb{Z}\times\mathbb{Z}_{4}\times \mathbb{Z}_{2}^{5}$-grading $\Gamma(2,0,0,2,1,0,2;1,1,\mathbf{i})$.
\end{itemize}
\end{example}

\subsection{Weyl groups in the general case}\label{sec.4.3}
Finally, we would like to compute the Weyl groups of the fine gradings on the twisted Heisenberg superalgebra $L=H_{n,m}^{\lambda,\kappa}$.

Let $\Gamma$, $\Gamma'$ be two fine gradings on $L$. Since $\mathcal{W}(\Gamma)=\mathcal{W}(\Gamma')$ if $\Gamma$ and $\Gamma'$ are equivalent, by Theorem \ref{equivalent}, without loss of generality, we can assume each fine grading $$\Gamma=\Gamma(l,t,t',q,q',s,p;\beta,\beta',\alpha,\alpha')$$ on $L$ satisfies
\begin{equation}\label{scalar chosen}
\begin{split}
&\beta=(\beta_{1},\ldots,\beta_{t})=(\delta_{1},\ldots,\delta_{1},\ldots,\delta_{\overline{t}},\ldots,\delta_{\overline{t}}),~ \hbox{each}~ \delta_{i} ~\hbox{repeated}~ m_{i}~\hbox{times},\\
&\beta'=(\beta_{1}',\ldots,\beta_{t'}')=(\delta_{1}',\ldots,\delta_{1}',\ldots,\delta_{\overline{t}'}',\ldots,\delta_{\overline{t}'}'),~ \hbox{each}~ \delta_{i}' ~\hbox{repeated}~ m_{i}'~\hbox{times},\\
&\alpha=(\alpha_{1},\ldots,\alpha_{q})=(\gamma_{1},\ldots,\gamma_{1},\ldots,\gamma_{\overline{q}},\ldots,\gamma_{\overline{q}}),~ \hbox{each}~ \gamma_{i} ~\hbox{repeated}~ n_{i}~\hbox{times},\\
&\alpha'=(\alpha_{1}',\ldots,\alpha_{q'}')=(\gamma_{1}',\ldots,\gamma_{1}',\ldots,\gamma_{\overline{q}'}',\ldots,\gamma_{\overline{q}'}'),~ \hbox{each}~ \gamma_{i}' ~\hbox{repeated}~ n_{i}'~\hbox{times},
\end{split}
\end{equation}
and for all $i\neq j$,
\begin{eqnarray}
    &&\widehat{\delta_{i}}\neq \widehat{\delta_{j}},~ \widehat{\delta_{i}'}\neq \widehat{\delta_{j}'}, ~\widehat{\gamma_{i}}\neq \widehat{\gamma_{j}}, ~ \widehat{\gamma_{i}'}\neq \widehat{\gamma_{j}'}~~~~ \hbox{if}~ l~\hbox{is even}, \\
   && \widetilde{\delta_{i}}\neq \widetilde{\delta_{j}},~ \widetilde{\delta_{i}'}\neq \widetilde{\delta_{j}'}~~~~~~~~~~~~~~~~~~~~~~~~~~~~ \hbox{if}~ l~\hbox{is odd} ~(q=q'=0).
\end{eqnarray}
Write $$\mathbf{m}:=(m_{1},\ldots,m_{\overline{t}}),~~\mathbf{m}':=(m_{1}',\ldots,m_{\overline{t}'}'),~~\mathbf{n}:=(n_{1},\ldots,n_{\overline{q}}),~~ \mathbf{n}':=(n_{1}',\ldots,\overline{n}'_{\overline{q}'}).$$
From now on, this grading is denoted by $$\Gamma=\Gamma(l,t,t',q,q',s,p;\mathbf{m},\mathbf{m}',\mathbf{n},\mathbf{n}';\beta,\beta',\alpha,\alpha').$$
A basis of homogeneous elements of this grading is formed by $z\in \mathrm{Z}(L)$, $u$, blocks $$\{x_{1}^{j},y_{1}^{j},\ldots,x_{l}^{j},y_{l}^{j}\}$$ of type $B_{l}^{\mathrm{I}}(u,\beta_{j})$
for $j\leq t$, blocks $$\{\overline{x}_{1}^{j},\overline{y}_{1}^{j},\ldots,\overline{x}_{l}^{j},\overline{y}_{l}^{j}\}$$ of type $\overline{B}_{l}^{\mathrm{I}}(u,\beta_{j}')$ for $j\leq t'$,
blocks $$\{a_{1}^{j},\ldots,a_{l}^{j}\}$$ of type $B_{\frac{l}{2}}^{\mathrm{II}}(u,\alpha_{j})$
 for $j\leq q$, blocks $$\{\overline{a}_{1}^{j},\ldots,\overline{a}_{l}^{j}\}$$ of type $\overline{B}_{\frac{l}{2}}^{\mathrm{II}}(u,\alpha_{j}')$  for $j\leq q'$, and the block $$\{p_{1},q_{1},\ldots,p_{s},q_{s},z_{1},\ldots,z_{p}\}$$ of type $B_{s,p}^{\mathrm{III}}(u)$.

For each $\sigma\in S_{n_{1}}\times\cdots\times S_{n_{\overline{q}}}$, $\sigma'\in S_{n_{1}'}\times\cdots\times S_{n_{\overline{q}'}'}$,  $\eta\in S_{m_{1}}\times\cdots\times S_{m_{\overline{t}}}$ and $\eta'\in S_{m_{1}'}\times\cdots\times S_{m_{\overline{t}'}'}$,
there exists a unique $\Upsilon_{(\eta,\eta',\sigma,\sigma')}\in \mathrm{Aut}(\Gamma)$ defined by $\Upsilon_{(\eta,\eta',\sigma,\sigma')}|_{V}=\mathbf{id}$, where $V$ is the subspace spanned by $$\{z, u, p_{1},q_{1},\ldots,p_{s},q_{s},z_{1},\ldots,z_{p}\},$$ and
$$x_{i}^{j}\mapsto x_{i}^{\eta(j)},~y_{i}^{j}\mapsto y_{i}^{\eta(j)}~,\overline{x}_{i}^{j}\mapsto \overline{x}_{i}^{\eta'(j)},~\overline{y}_{i}^{j}\mapsto \overline{y}_{i}^{\eta'(j)}~,a_{i}^{j}\mapsto a_{i}^{\sigma(j)},~\overline{a}_{i}^{j}\mapsto \overline{a}_{i}^{\sigma'(j)}$$
for all $i\leq l$,
which obviously preserves the grading but interchanges the blocks.

For each $j\leq t$, there exists a unique $\theta_{j}\in \mathrm{Aut}(\Gamma)$ such that $\theta_{j}|_{W}=\mathbf{id}$, where $W$ is the subspace spanned by $$ \{z, u, x_{i}^{c},y_{i}^{c},\overline{x}_{i}^{c'},\overline{y}_{i}^{c'},a_{i}^{d},\overline{a}_{i}^{d'},p_{i'},q_{i'},z_{j'}\},$$
where $i\leq l$, $i'\leq s$, $j'\leq p$, $d\leq q$, $d'\leq q'$, $c\leq t$, $c'\leq t'$, $c\neq j$,
and
$$\theta_{j}(x_{i}^{j})=\mathbf{i}x_{i+1}^{j},~~~~~~\theta_{j}(y_{i}^{j})=\mathbf{i}y_{i-1}^{j}$$
for all $i\leq l$ (indices taken modulo $l$).

For each $j\leq t'$, there exists a unique $\overline{\theta}_{j}\in \mathrm{Aut}(\Gamma)$ such that $\overline{\theta}_{j}|_{W'}$, where $W'$ is the subspace spanned by $$\{z, u, x_{i}^{c},y_{i}^{c},\overline{x}_{i}^{c'},\overline{y}_{i}^{c'},a_{i}^{d},\overline{a}_{i}^{d'},p_{i'},q_{i'},z_{j'}\},$$
where $i\leq l$, $i'\leq s$, $j'\leq p$, $d\leq q$, $d'\leq q'$, $c\leq t$, $c'\leq t'$, $c'\neq j$,
 and
$$\overline{\theta}_{j}(\overline{x}_{i}^{j})
=\mathbf{i}\overline{x}_{i+1}^{j},~~~~~~\overline{\theta}_{j}(\overline{y}_{i}^{j})=\mathbf{i}\overline{y}_{i-1}^{j}$$
for all $i\leq l$ (indices taken modulo $l$).

When $l$ is even, for each $j\leq t$, there exists a unique $\vartheta_{j}\in \mathrm{Aut}(\Gamma)$ such that $\vartheta_{j}|_{V}=\mathbf{id}$, where $V$ is the subspace spanned by $$\{z, u, x_{i}^{c},y_{i}^{c},\overline{x}_{i}^{c'},\overline{y}_{i}^{c'},a_{i}^{d},\overline{a}_{i}^{d'},p_{i'},q_{i'},z_{j'}\}$$ for $i\leq l$, $i'\leq s$, $j'\leq p$, $d\leq q$, $d'\leq q'$, $c\leq t$, $c'\leq t'$, $c\neq j$,  and  $$\vartheta_{j}(x_{i}^{j})=y_{i}^{j},~~~~~~\vartheta_{j}(y_{i}^{j})=-x_{i}^{j}$$ for all $i\leq l$;
for each $j\leq t'$, there exists a unique $\overline{\vartheta}_{j}\in \mathrm{Aut}(\Gamma)$ such that $\overline{\vartheta}_{j}|_{V'}=\mathbf{id}$, where  $V'$ is the subspace spanned by $$\{z, u, x_{i}^{c},y_{i}^{c},\overline{x}_{i}^{c'},\overline{y}_{i}^{c'},a_{i}^{d},\overline{a}_{i}^{d'},p_{i'},q_{i'},z_{j'}\}$$ for $i\leq l$, $i'\leq s$, $j'\leq p$, $d\leq q$, $d'\leq q'$, $c\leq t$, $c'\leq t'$, $c'\neq j$, and such that $$\overline{\vartheta}_{j}(\overline{x}_{i}^{j})=\overline{y}_{i}^{j},~~~~~~\overline{\vartheta}_{j}(\overline{y}_{i}^{j})=\overline{x}_{i}^{j}$$
for all $i\leq l$.

Also when $l$ is even, for each $j\leq q$, there exists a unique $\rho_{j}\in \mathrm{Aut}(\Gamma)$ such that $\rho_{j}|_{W}=\mathbf{id}$, where $W$ is the subspace spanned by $$\{z, u, x_{i}^{c},y_{i}^{c},\overline{x}_{i}^{c'},\overline{y}_{i}^{c'},a_{i}^{d},\overline{a}_{i}^{d'},p_{i'},q_{i'},z_{j'}\},$$ where $i\leq l$, $i'\leq s$, $j'\leq p$, $d\leq q$, $d'\leq q'$, $c\leq t$, $c'\leq t'$, $d\neq j$, and such that $$\rho_{j}(a_{i}^{j})=a_{\frac{l}{2}+i}^{j}$$ for all $i\leq l$ (indices taken modulo $l$);
for each $j\leq q'$, there exists a unique $\overline{\rho}_{j}\in \mathrm{Aut}(\Gamma)$ such that $\overline{\rho}_{j}|_{W'}=\mathbf{id}$, where $W'$ is the
subspace spanned by $$\{z, u, x_{i}^{c},y_{i}^{c},\overline{x}_{i}^{c'},\overline{y}_{i}^{c'},a_{i}^{d},\overline{a}_{i}^{d'},p_{i'},q_{i'},z_{j'}\}$$ for $i\leq l$, $i'\leq s$, $j'\leq p$, $d\leq q$, $d'\leq q'$, $c\leq t$, $c'\leq t'$, $d'\neq j$, and  $$\overline{\rho}_{j}(\overline{a}_{i}^{j})=\overline{a}_{\frac{l}{2}+i}^{j}$$ for all $i\leq l$ (indices taken modulo $l$).

Finally take $\varrho_{i}$, $\widetilde{\tau}$ and $\widetilde{\varsigma}$ described in Theorem \ref{classification1}.

\begin{lemma}\label{W'}
Let $$\Gamma=\Gamma(l,t,t',q,q',s,p;\mathbf{m},\mathbf{m}',\mathbf{n},\mathbf{n}';\beta,\beta',\alpha,\alpha')$$ be a fine grading on $H_{n,m}^{\lambda,\kappa}$, where $$\mathbf{m}=(m_{1},\ldots,m_{\overline{t}}),~~\mathbf{m}'=(m_{1}',\ldots,m_{\overline{t}'}'),~~\mathbf{n}=(n_{1},\ldots,n_{\overline{q}}),~~ \mathbf{n}'=(n_{1}',\ldots,\overline{n}'_{\overline{q}'}).$$ If $l$ is even and the automorphism $f\in \mathrm{Aut}(\Gamma)$  such that $f(z)=z$ and $f(u)=u$, then the image $[f]$ of $f$ in $\mathcal{W}(\Gamma)$ belongs to the group generated by
\begin{equation}\label{Weyl group1}
\{[\Upsilon_{(\eta,\eta',\sigma,\sigma')}],[\theta_{j}],[\vartheta_{j}],[\overline{\theta}_{j'}],[\overline{\vartheta}_{j'}],[\rho_{i}],
[\overline{\rho}_{i'}][\varrho_{h}],[\widetilde{\tau}],[\widetilde{\varsigma}]\},\end{equation}
where $j\leq t,j'\leq t',i\leq q,i'\leq q',h\leq s$, $\eta\in S_{m_{1}}\times\cdots\times S_{m_{\overline{t}}}$, $\eta'\in S_{m_{1}'}\times\cdots\times S_{m_{\overline{t}'}'}$, $\sigma\in S_{n_{1}}\times\cdots\times S_{n_{\overline{q}}}$ and $\sigma'\in S_{n_{1}'}\times\cdots\times S_{n_{\overline{q}'}'}$.
\end{lemma}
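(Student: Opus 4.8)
The plan is to exploit two consequences of the hypotheses $f(z)=z$ and $f(u)=u$. First, since $f$ is a superalgebra automorphism fixing $u$, for every $x\in L$ we have $f(\varphi(x))=f([u,x])=[f(u),f(x)]=[u,f(x)]=\varphi(f(x))$, so $f$ commutes with $\varphi=\mathrm{ad}\,u$ and therefore preserves every eigenspace $V_{\mu}$ and every $V_{\mu}^{l}$. Second, by the description of the universal grading group in Eq. (\ref{universal grading group1}) every homogeneous component of $\Gamma$ is one--dimensional: the degrees listed there are pairwise distinct, the markers in the free factors $\mathbb{Z}^{t+t'}$, $\mathbb{Z}^{s}$ and in $\mathbb{Z}_{2}^{q+q'}$, $\mathbb{Z}_{2}^{p}$ separate the different block families, while the $\mathbb{Z}_{2l}$--coordinate separates the members inside a block. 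Consequently $f$ merely permutes the one--dimensional components up to scalars; equivalently it induces a permutation $\sigma_{f}$ of the support with $f(L_{g})=L_{\sigma_{f}(g)}$, and commutation with $\varphi$ translates into $\sigma_{f}(g+h)=\sigma_{f}(g)+h$ whenever $\varphi(L_{g})\neq 0$, where $h=\mathrm{deg}\,u$.

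Next I would show that $f$ respects the block decomposition of Eq. (\ref{block}). Being a superalgebra automorphism, $f$ preserves parity, so it sends even blocks to even blocks and odd blocks to odd blocks. Because $f$ commutes with $\varphi$ (and does not merely fix $\langle u\rangle$), it preserves the $\varphi$--eigenvalues exactly; by Eq. (\ref{specu}) the multiset of eigenvalues carried by a type-I block of scalar $\beta_{j}$ (resp.\ a type-II block of scalar $\alpha_{j}$) is determined by the class $\widehat{\beta_{j}}$ (resp.\ $\widehat{\alpha_{j}}$), which for $l$ even is closed under negation since $\xi^{l/2}=-1$. Hence $f$ can only move a block to another block of the same type and the same class, that is, to a block in the same family of Eq. (\ref{scalar chosen}); the separation of type I from type II is moreover forced both by the shape of $\varphi$ on the block (two $l$--cycles with a cross--pairing versus one $l$--cycle with a self--pairing) and by the markers $\mathbb{Z}^{t+t'}$ versus $\mathbb{Z}_{2}^{q+q'}$. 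These intra--family permutations are realized exactly by the $\Upsilon_{(\eta,\eta',\sigma,\sigma')}$, so after replacing $f$ by $\Upsilon_{(\eta,\eta',\sigma,\sigma')}^{-1}f$ I may assume that $f$ stabilizes every individual block and fixes the type-III block.

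It then remains to analyze $f$ on a single stabilized block. On a type-I block $\{x_{1}^{j},y_{1}^{j},\ldots,x_{l}^{j},y_{l}^{j}\}$ the relation $\sigma_{f}(g+h)=\sigma_{f}(g)+h$ forces $f$ to commute with the $l$--cycles $x_{i}^{j}\mapsto x_{i+1}^{j}$, $y_{i}^{j}\mapsto y_{i+1}^{j}$; since the centralizer of an $l$--cycle in $S_{l}$ is cyclic, $f$ either acts by a cyclic shift, which modulo $\mathrm{Stab}(\Gamma)$ is a power of $\theta_{j}$ (resp.\ $\overline{\theta}_{j}$), or it interchanges the $x$-- and $y$--families, which modulo $\mathrm{Stab}(\Gamma)$ is such a power composed with $\vartheta_{j}$ (resp.\ $\overline{\vartheta}_{j}$); the equation $f(z)=z$ together with $[x_{i}^{j},y_{l-i}^{j}]=(-1)^{l-i}\beta_{j}z$ pins down the scalars up to a single free factor, which is absorbed into the stabilizer. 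On a type-II block $\{a_{1}^{j},\ldots,a_{l}^{j}\}$ the same argument gives $f(a_{i}^{j})=c\,a_{i+a}^{j}$; imposing $f(z)=z$ on $[a_{i}^{j},a_{l+1-i}^{j}]=(-1)^{i}\alpha_{j}z$ shows the shifted pair is again a nonzero bracket only when $2a\equiv 0\pmod{l}$, i.e.\ $a\equiv 0$ or $a\equiv l/2$, so modulo $\mathrm{Stab}(\Gamma)$ the map $f$ is either the identity or $\rho_{j}$ (resp.\ $\overline{\rho}_{j}$). Finally, on the type-III block $f$ permutes $\{p_{1},q_{1},\ldots,p_{s},q_{s},z_{1},\ldots,z_{p}\}$ while preserving the symmetric form with values in $\langle z\rangle$, which by the argument of Theorem \ref{classification1}(2) is realized modulo $\mathrm{Stab}(\Gamma)$ by $\varrho_{h}$, $\widetilde{\tau}$ and $\widetilde{\varsigma}$.

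Putting these together, after composing $f$ with suitable inverses of $\Upsilon_{(\eta,\eta',\sigma,\sigma')}$, of the $\theta_{j},\vartheta_{j},\overline{\theta}_{j},\overline{\vartheta}_{j}$, of the $\rho_{j},\overline{\rho}_{j}$, and of $\varrho_{h},\widetilde{\tau},\widetilde{\varsigma}$, the resulting automorphism fixes $z$ and $u$ and only scales each one--dimensional homogeneous component, hence lies in $\mathrm{Stab}(\Gamma)$; therefore $[f]$ belongs to the group generated by the classes in (\ref{Weyl group1}). The main obstacle is the within--block rigidity of the third step: one must check carefully that commutation with $\varphi$, preservation of the grading, and the single normalization $f(z)=z$ leave no freedom beyond the listed generators --- in particular the computation that only the shifts by $0$ and $l/2$ survive on a type-II block, and the scalar bookkeeping guaranteeing that every residual scalar can be pushed into $\mathrm{Stab}(\Gamma)$.
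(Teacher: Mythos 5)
Your proof follows essentially the same route as the paper's: use $f(u)=u$ to show $f$ commutes with $\mathrm{ad}\,u$ and hence can only permute blocks within a family of equal scalar class, compose with a suitable $\Upsilon_{(\eta,\eta',\sigma,\sigma')}^{-1}$ to reduce to an automorphism stabilizing each block, and then resolve the residual action block by block against the listed generators. In fact the paper's own proof stops after the $\Upsilon$-reduction and merely asserts the within-block step, so your explicit analysis of the shift constraints ($b=-a$ on type-I blocks, $2a\equiv 0\pmod l$ on type-II blocks) supplies detail the paper leaves implicit; the argument is correct.
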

\begin{proof}
Fix $j\in\{1,\ldots,t\}$. There is $c\in \{1,\ldots,\overline{t}\}$ such that $\delta_{c}=\beta_{j}$ (see Eq. (\ref{scalar chosen})). Suppose $X_{j}$ is the $j$th  block, which is a block of type $B_{l}^{\mathrm{I}}(u,\delta_{c})$. Since $f(u)=u$, we have $f(\langle X_{j}\rangle)=\langle X_{j}\rangle$, so it must generate the subspace spanned by the $d$th block for some $d$ such that $\delta_{c}=\beta_{d}$. In other words, there is $\eta\in S_{m_{1}}\times\cdots\times S_{m_{\overline{t}}}$ such that $f$ applies the $j$th block of type I into the span of the $\eta(j)=d$th block of type I. In the same way, there are $\eta'\in S_{m_{1}'}\times\cdots\times S_{m_{\overline{t}'}'}$, $\sigma\in S_{n_{1}}\times\cdots\times S_{n_{\overline{q}}}$, $\sigma'\in S_{n_{1}'}\times\cdots\times S_{n_{\overline{q}'}'}$ such that the automorphism $\Upsilon_{(\eta,\eta',\sigma,\sigma')}^{-1}f$ applies each block into the subspace spanned by itself. So we have $[f]$ is generated by $[\Upsilon_{(\eta,\eta',\sigma,\sigma')}]$, $[\theta_{j}]$, $[\vartheta_{j}]$, $[\overline{\theta}_{j'}]$, $[\overline{\vartheta}_{j'}]$, $[\rho_{i}]$,
$[\overline{\rho}_{i'}]$, $[\varrho_{h}]$, $[\widetilde{\tau}]$ and $[\widetilde{\varsigma}]$.
\end{proof}
Note that $\langle[\theta_{j}],[\vartheta_{j}]\rangle\cong D_{l}$, $\langle[\overline{\theta}_{j}],[\overline{\vartheta}_{j}]\rangle\cong D_{l}$, the group $\mathcal{W}'$ generated by the set in Eq. (\ref{Weyl group1}) is isomorphic to
\begin{eqnarray}
(S_{m_{1}}\times\cdots\times S_{m_{\overline{t}}}\times S_{m_{1}'}\times\cdots\times S_{m_{\overline{t}'}'}&\times S_{n_{1}}\times\cdots\times S_{n_{\overline{q}}}\times S_{n_{1}'}\times\cdots\times S_{n_{\overline{q}'}'}\times S_{s}\times S_{p})\nonumber\\
&\ltimes(D_{l}^{t+t'}\times \mathbb{Z}_{2}^{q+q'}\times \mathbb{Z}_{2}^{s}).\label{liu999}
\end{eqnarray}
Similar to \cite[Lemma 7]{Cal2}, we have the following  technical lemma.
\begin{lemma}\label{gc}
Let $$\Gamma=\Gamma(l,t,t',q,q',s,p;\mathbf{m},\mathbf{m}',\mathbf{n},\mathbf{n}';\beta,\beta',\alpha,\alpha')$$ be a fine grading on $L=H_{n,m}^{\lambda,\kappa}$, where $$\mathbf{m}=(m_{1},\ldots,m_{\overline{t}}),~~\mathbf{m}'=(m_{1}',\ldots,m_{\overline{t}'}'),~~\mathbf{n}=(n_{1},\ldots,n_{\overline{q}}),~~ \mathbf{n}'=(n_{1}',\ldots,\overline{n}'_{\overline{q}'}).$$
$$\beta=(\beta_{1},\ldots,\beta_{t}),~~ \beta'=(\beta_{1}',\ldots,\beta_{t'}'),~~ \alpha=(\alpha_{1},\ldots,\alpha_{q}),~~ \alpha'=(\alpha_{1}',\ldots,\alpha_{q'}').$$

  $(\mathrm{1})$ If $f\in \mathrm{Aut}(L)$ satisfies that $f(z)=z$ and $f(u)=\varepsilon u$, then $\varepsilon$ is a primitive $c$th root of  unit and

\begin{equation}\label{XX'}
 \{\widehat{\alpha_{1}},\ldots,\widehat{\alpha_{q}}\}=\bigcup\limits_{j=0}^{c-1}\varepsilon^{j}X,~~~\{\widehat{\alpha'_{1}},
\ldots,\widehat{\alpha'_{q'}}\}=\bigcup\limits_{j=0}^{c-1}\varepsilon^{j}X',
\end{equation}
\begin{equation}\label{YY'}
  \{\widehat{\beta_{1}},\ldots,\widehat{\beta_{t}}\}=\bigcup\limits_{j=0}^{c-1}\varepsilon^{j}Y,~~~\{\widehat{\beta'_{1}},
\ldots,\widehat{\beta'_{t'}}\}=\bigcup\limits_{j=0}^{c-1}\varepsilon^{j}Y',
\end{equation}
for some sets $X$, $X'$, $Y$ and $Y'$ of classes such that
$X\cap \varepsilon^{j} X$ is either $\emptyset$ or $X$, $X'\cap \varepsilon^{j} X'$ is either $\emptyset$ or $X'$, $Y\cap \varepsilon^{j} Y$ is either $\emptyset$ or $Y$ and $Y'\cap \varepsilon^{j} Y'$ is either $\emptyset$ or $Y'$ for all $0\leq j\leq c-1$.

  $(\mathrm{2})$ If there is a divisor $c$ of $t$, $t'$, $q$ and $q'$ such that $\beta_{j}=\beta_{j+\frac{t}{c}}$ for all $1\leq j\leq \frac{t}{c}$, $\beta'_{j}=\beta_{j+\frac{t'}{c}}'$ for all $1\leq j\leq \frac{t'}{c}$, $\alpha_{j}=\alpha_{j+\frac{q}{c}}$ for all $1\leq j\leq \frac{q}{c}$, $\alpha'_{j}=\alpha_{j+\frac{q'}{c}}'$ for all $1\leq j\leq \frac{q'}{c}$, for $\epsilon$ a primitive $c$th root of  unit, there exists a unique automorphism $g_{c}$ of order $c$ such that $g_{c}(u)=\frac{1}{\epsilon}u$, $g_{c}|_{V}=\mathbf{id}$, where $V$ is the subspace spanned by $\{z, p_{i}, q_{i}, z_{j}\mid i\leq s,j\leq p\}$,  and
  \begin{eqnarray*}
     &&  g_{c}(x_{i}^{j})=x_{i}^{j+\frac{t}{c}},~~~g_{c}(y_{i}^{j})=y_{i}^{j+\frac{t}{c}},~~~g_{c}(\overline{x}_{i}^{j'})=\overline{x}_{i}^{j'+\frac{t'}{c}},\\
     &&  g_{c}(\overline{y}_{i}^{j'})=\overline{y}_{i}^{j'+\frac{t'}{c}},~~~g_{c}(a_{i}^{d})=a_{i}^{d+\frac{q}{c}},~~~g_{c}(\overline{a}_{i}^{d'})= \overline{a}_{i}^{d'+\frac{q'}{c}}
  \end{eqnarray*}
 for all $i\leq l$, $j\leq t$, $j'\leq t'$, $d\leq q$, $d'\leq q'$.
\end{lemma}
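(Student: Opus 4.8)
The plan is to analyse part (1) through the operator $\varphi=\mathrm{ad}\,u$ and to treat part (2) as an explicit construction followed by a bracket-by-bracket verification. For part (1), the first step is to record the conjugation identity forced by $f(u)=\varepsilon u$ and $f(z)=z$: since $f$ is a superalgebra automorphism, for every $x\in L$ one has $f(\varphi(x))=f([u,x])=[f(u),f(x)]=\varepsilon[u,f(x)]=\varepsilon\varphi(f(x))$, so $f\varphi=\varepsilon\varphi f$ and hence $\varphi f=\varepsilon^{-1}f\varphi$, giving $f(V_{\mu})=V_{\varepsilon^{-1}\mu}$ for each eigenvalue $\mu$. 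Because $f$ is bijective it permutes the eigenspaces, so multiplication by $\varepsilon^{-1}$ permutes the finite set $\mathrm{Spec}(u)$ of Eq.\ (\ref{specu}); as the orbit of any $\mu$ is then finite, $\varepsilon$ must be a root of unity, and letting $c$ be its order makes $\varepsilon$ a primitive $c$th root of unity.

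The heart of part (1) is to promote this eigenvalue bookkeeping to the level of blocks. As a superalgebra automorphism $f$ preserves the $\mathbb{Z}_{2}$-grading, so it maps $[u,L_{\overline 0}]$, $[u,L_{\overline 1}]$ and the type III part $\ker\varphi\cap L_{\overline 1}$ each to itself. Within the even part I would separate type I from type II intrinsically, using the dichotomy already exploited in the proof of Proposition \ref{classification2}: an eigenvector $x\in V_{\mu}^{l}\cap L_{\overline 0}$ lies in a type II block exactly when $[x,\varphi(x)]\neq 0$ and in a type I block otherwise, and this property is invariant under $f$ since $f(z)=z$ and $f\varphi=\varepsilon\varphi f$. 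Consequently $f$ carries an even block of type $B_{l/2}^{\mathrm{II}}(u,\alpha_i)$ to an even block of type $B_{l/2}^{\mathrm{II}}(u,\varepsilon^{-1}\alpha_i)$ (compute $[u,f(a)]=\varepsilon^{-1}f([u,a])$), and Lemma \ref{same block} then forces $\widehat{\varepsilon^{-1}\alpha_i}=\widehat{\alpha_{i'}}$ for some $i'$. Hence $\{\widehat{\alpha_1},\ldots,\widehat{\alpha_q}\}$ is stable under multiplication by $\varepsilon$, so it is a disjoint union of $\langle\varepsilon\rangle$-orbits, which is precisely the decomposition $\bigcup_{j=0}^{c-1}\varepsilon^{j}X$ with the stated intersection property; applying the same argument to the odd type II, even type I and odd type I families yields the assertions for $\alpha'$, $\beta$ and $\beta'$. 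The step I expect to be the genuine obstacle is exactly this intrinsic separation of type I from type II blocks, together with the verification that $f$ cannot interchange them, since the root-of-unity and orbit statements are then formal.

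For part (2) I would define $g_c$ on the homogeneous basis by the prescribed rules, namely $g_c(u)=\epsilon^{-1}u$, $g_c|_{V}=\mathbf{id}$, and the cyclic shifts $x_i^{j}\mapsto x_i^{j+t/c}$, $y_i^{j}\mapsto y_i^{j+t/c}$, $\overline{x}_i^{j}\mapsto\overline{x}_i^{j+t'/c}$, $\overline{y}_i^{j}\mapsto\overline{y}_i^{j+t'/c}$, $a_i^{j}\mapsto a_i^{j+q/c}$ and $\overline{a}_i^{j}\mapsto\overline{a}_i^{j+q'/c}$, and then check that it respects every defining bracket. The brackets valued in $\langle z\rangle$ are preserved because $g_c(z)=z$ and the shift is index-preserving inside each block, while the relations $[u,x_i^{j}]=-\mathbf{i}\beta_j x_{i+1}^{j}$ and their analogues require comparing $g_c([u,x_i^{j}])$ with $[g_c(u),g_c(x_i^{j})]=\epsilon^{-1}[u,x_i^{j+t/c}]$; this reduces to matching the scalar of the shifted block against $\epsilon$ times that of its source, which is exactly the compatibility encoded in the periodicity hypothesis on $\beta,\beta',\alpha,\alpha'$. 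Since a linear bijection preserving all brackets on a basis is an automorphism, $g_c\in\mathrm{Aut}(\Gamma)$, and it is unique because it is prescribed on a basis. Finally $g_c^{c}$ returns every block index to itself modulo $t,t',q,q'$ and fixes $u$ (as $\epsilon^{-c}=1$), so $g_c^{c}=\mathbf{id}$; and $g_c$ has order exactly $c$ because $\epsilon$ is a primitive $c$th root of unity.
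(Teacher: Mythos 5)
Your proposal should first be set against what the paper actually provides: the paper does not prove this lemma at all, it only remarks that it is ``similar to \cite[Lemma 7]{Cal2}'', so you are supplying an argument where the authors supplied a citation. Your opening step for part (1) is correct and clean: from $f\varphi=\varepsilon\varphi f$ one gets $f\varphi f^{-1}=\varepsilon\varphi$, multiplication by $\varepsilon^{-1}$ permutes the finite set $\mathrm{Spec}(u)$, and $\varepsilon$ is a root of unity. The gap is exactly where you predicted it would be, and your proposed resolution does not close it. The criterion ``$x\in V_{\mu}^{l}\cap L_{\overline 0}$ lies in a type II block iff $[x,\varphi(x)]\neq 0$'' is \emph{not} intrinsic to the triple $(L,z,u)$: inside the span of a single even block of type $B_{2}^{\mathrm{I}}(u,\beta)$ the element $v=x_{1}+y_{2}$ satisfies $\varphi^{2}(v)=(-\mathbf{i}\beta)^{2}v$ and $[v,\varphi(v)]=-\mathbf{i}\beta([x_{1},y_{1}]-[x_{2},y_{2}])=2\mathbf{i}\beta^{2}z\neq 0$, so type I blocks also contain such elements. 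The dichotomy in the proof of Proposition \ref{classification2} only works because $x$ is required to be homogeneous \emph{for the grading} $\Gamma$, and the hypothesis of part (1) only puts $f$ in $\mathrm{Aut}(L)$, so $f$ need not carry $\Gamma$-homogeneous elements to $\Gamma$-homogeneous elements. This is not a removable technicality: the paper's own example on $H_{6,4}$ exhibits inequivalent fine gradings on the same $(L,z,u)$ with different values of $(t,q)$, so no invariant of $(L,z,u)$ alone can recover the type I/type II split, and any correct proof must use that $f$ respects $\Gamma$ (as it does in the only application, in the proof of the final theorem, where $f\in\mathrm{Aut}(\Gamma)$). For the same reason your appeal to Lemma \ref{same block} is unsupported: that lemma compares two blocks spanning the \emph{same} subspace, whereas $f(\langle Y_{i}\rangle)$ is merely a $\varphi$-invariant subspace sitting inside a sum of eigenspaces that may meet several blocks of both types; only grading-preservation lets you conclude that $f$ permutes the block spans.

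Part (2) is structurally fine (define $g_{c}$ on the basis, verify brackets, conclude uniqueness and order $c$ from $\epsilon$ being primitive), but the verification you defer to ``the compatibility encoded in the periodicity hypothesis'' does not actually come out of the hypothesis as printed. The relations $[u,x_{i}^{j}]=-\mathbf{i}\beta_{j}x_{i+1}^{j}$ force $\beta_{j+\frac{t}{c}}=\epsilon\beta_{j}$ once $g_{c}(u)=\epsilon^{-1}u$, while the $z$-valued brackets $[x_{i}^{j},y_{l-i}^{j}]=(-1)^{l-i}\beta_{j}z$ force $\beta_{j+\frac{t}{c}}=\beta_{j}$ unless $g_{c}$ rescales the shifted blocks; with the literal hypothesis $\beta_{j}=\beta_{j+\frac{t}{c}}$ and no rescaling the map is an automorphism only when $\epsilon=1$. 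A correct statement needs $\beta_{j+\frac{t}{c}}=\epsilon\beta_{j}$ (and analogues) together with compensating scalars $c_{j},d_{j}$ with $c_{j}d_{j}=\epsilon^{-1}$ in the definition of $g_{c}$. That defect originates in the paper's formulation, but an honest bracket-by-bracket check of the kind you describe would have exposed it rather than confirmed the claim.
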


Finally, we can
characterize  all the Weyl groups of for the fine gradings of twisted Heisenberg superalgebras.

\begin{theorem}
Let $$\Gamma=\Gamma(l,t,t',q,q',s,p;\mathbf{m},\mathbf{m}',\mathbf{n},\mathbf{n}';\beta,\beta',\alpha,\alpha')$$ be a fine grading on $H_{n,m}^{\lambda,\kappa}$, where $$\mathbf{m}=(m_{1},\ldots,m_{\overline{t}}),~~\mathbf{m}'=(m_{1}',\ldots,m_{\overline{t}'}'),~~\mathbf{n}=(n_{1},\ldots,n_{\overline{q}}),~~ \mathbf{n}'=(n_{1}',\ldots,\overline{n}'_{\overline{q}'}).$$
Take $c\in \mathbb{N}$ to be the greatest integer such that, for $\epsilon$ a primitive $c$th root of  unit, there are sets $X$, $X'$, $Y$ and $Y'$ of classes such that Eqs. (\ref{XX'}) and  (\ref{YY'}) holds. Let $d$ be the minimum positive integer such that $\widehat{\epsilon^{d}}=\widehat{1}$.

$(\mathrm{1})$ If $l$ is even, the Weyl group $\mathcal{W}(\Gamma)$ is isomorphic to the quotient group of
$\mathcal{W}'\rtimes \mathbb{Z}_{c}$ modulo $\mathbb{Z}_{\frac{c}{d}}$,
where
 $\mathcal{W}'$ is  the semidirect product of
$$S_{m_{1}}\times\cdots\times S_{m_{\overline{t}}}\times S_{m_{1}'}\times\cdots\times S_{m_{\overline{t}'}'}\times S_{n_{1}}\times\cdots\times S_{n_{\overline{q}}}\times S_{n_{1}'}\times\cdots\times S_{n_{\overline{q}'}'}\times S_{s}\times S_{p}$$
and
$D_{l}^{t+t'}\times \mathbb{Z}_{2}^{q+q'}\times \mathbb{Z}_{2}^{s}$.

$(\mathrm{2})$ If $l$ is odd, the Weyl group $\mathcal{W}(\Gamma)$ is isomorphic to the quotient group of $$((S_{m_{1}}\times\cdots\times S_{m_{\overline{t}}}\times S_{m_{1}'}\times\cdots\times S_{m_{\overline{t}'}'}\times \times \mathbb{Z}_{2}^{s+1}\times S_{s}\times S_{p})\ltimes \mathbb{Z}_{l}^{t+t'})\rtimes \mathbb{Z}_{c}$$
modulo $\mathbb{Z}_{\frac{c}{d}}$.
\end{theorem}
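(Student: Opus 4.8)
The plan is to exhibit $\mathcal{W}(\Gamma)$ as an extension of the ``block-permuting'' Weyl group $\mathcal{W}'$ by the cyclic group generated by the rescalings of $u$. Since $\mathcal{W}(\Gamma)$ depends only on the equivalence class of $\Gamma$ (as noted at the start of Section \ref{sec.4.3}), by Theorem \ref{equivalent} I may assume $\Gamma$ is in the normalized form of Eq. (\ref{scalar chosen}). The cornerstone is the homomorphism $\Phi\colon\mathrm{Aut}(\Gamma)\to\mathbb{C}^{\times}$ recording the action of each automorphism on the one-dimensional abelianization $L/[L,L]=L/H_{n,m}$. Since every basis vector other than $u$ lies in the characteristic ideal $H_{n,m}=[L,L]$ while $f(u)\equiv\varepsilon u\pmod{[L,L]}$ with $\varepsilon=\Phi(f)\neq0$, homogeneity of $f(u)$ forces $f(\langle u\rangle)=\langle u\rangle$, i.e. $f(u)=\varepsilon u$. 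By Lemma \ref{gc}(1) each value $\varepsilon$ preserves the multiset of spectral classes of Eqs. (\ref{XX'})--(\ref{YY'}); these values form a finite subgroup of $\mathbb{C}^{\times}$, hence, by the maximality in the definition of $c$, the subgroup $\langle\epsilon\rangle=\mathbb{Z}_c$. The automorphism $g_c$ of Lemma \ref{gc}(2) satisfies $\Phi(g_c)=\epsilon^{-1}$, so $\mathrm{Im}\,\Phi=\mathbb{Z}_c$ exactly.

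Next I would isolate the normal subgroup. Set $A=\{f\in\mathrm{Aut}(\Gamma)\mid f(z)=z,\ f(u)=u\}$. A direct check gives $A\trianglelefteq\mathrm{Aut}(\Gamma)$ (for $h\in\mathrm{Aut}(\Gamma)$ and $a\in A$ one has $hah^{-1}(z)=z$ and $hah^{-1}(u)=u$, because $h$ scales both $z$ and $u$), so its image $\mathcal{W}'$ is normal in $\mathcal{W}(\Gamma)$. Since all of $\Upsilon_{(\eta,\eta',\sigma,\sigma')},\theta_j,\vartheta_j,\overline{\theta}_{j'},\overline{\vartheta}_{j'},\rho_i,\overline{\rho}_{i'},\varrho_h,\widetilde{\tau},\widetilde{\varsigma}$ lie in $A$, Lemma \ref{W'} identifies the image of $A$ with the group of Eq. (\ref{liu999}) when $l$ is even. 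I would then show $\mathcal{W}(\Gamma)=\langle\mathcal{W}',[g_c]\rangle$: given $f$ with $\Phi(f)=\epsilon^{m}$, the automorphism $g_c^{m}f$ fixes $u$, and composing it with a suitable diagonal (hence stabilizer) automorphism rescaling only $z$ produces an element of $A$; therefore $[f]\in\langle[g_c]\rangle\cdot\mathcal{W}'$.

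These facts furnish a surjection $\pi\colon\mathcal{W}'\rtimes\mathbb{Z}_c\twoheadrightarrow\mathcal{W}(\Gamma)$, where $\mathbb{Z}_c=\langle g_c\rangle$ acts on $\mathcal{W}'$ by conjugation, sending the generator of $\mathbb{Z}_c$ to $[g_c]$ and $\mathcal{W}'$ identically; the whole content of part (1) then reduces to computing $\ker\pi$, i.e. the powers $g_c^{j}$ with $[g_c^{j}]\in\mathcal{W}'$. Such a relation means $g_c^{j}=as$ with $a\in A$ and $s\in\mathrm{Stab}(\Gamma)$, forcing $\epsilon^{-j}=\Phi(g_c^{j})=\Phi(s)\in\Phi(\mathrm{Stab}(\Gamma))$. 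Here I would use that all homogeneous components are one-dimensional, so $\mathrm{Stab}(\Gamma)=\mathrm{Diag}(\Gamma)$, and that the bracket relations inside any block of type I or II force $\chi(u)^{l}=1$ for a diagonal character $\chi$, while conversely every $l$th root of unity lying in $\langle\epsilon\rangle$ is realized by such a $\chi$. Writing $\mu_l$ for the group of $l$th roots of unity, this gives $\Phi(\mathrm{Stab}(\Gamma))=\mu_l\cap\langle\epsilon\rangle=\langle\epsilon^{d}\rangle$, a cyclic group of order $c/d=\gcd(c,l)$ (indeed $\widehat{\epsilon^{d}}=\widehat1$ is exactly the condition $\epsilon^{d}\in\mu_l$, and $d\mid c$). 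Hence $[g_c^{j}]\in\mathcal{W}'$ precisely when $d\mid j$, so $\ker\pi=\{([g_c^{j}]^{-1},g_c^{j})\mid d\mid j\}\cong\mathbb{Z}_{c/d}$, and the first isomorphism theorem yields part (1).

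For part (2), where $l$ is odd and necessarily $q=q'=0$, the same scheme applies but the internal structure of $\mathcal{W}'$ changes: the block reflections play the role of the map $\vartheta_j'$ of the odd type-I example, which satisfies $\Phi(\vartheta_j')=-1$ and therefore does not belong to $A$. Consequently each $D_l$ factor degenerates to $\mathbb{Z}_l$ inside $\mathcal{W}'$, while the reflection symmetry is absorbed into the outer $\mathbb{Z}_c$, producing the shape $((\cdots\times\mathbb{Z}_2^{s+1}\times S_s\times S_p)\ltimes\mathbb{Z}_l^{t+t'})\rtimes\mathbb{Z}_c$ before taking the same quotient by $\mathbb{Z}_{c/d}$. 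I expect the main obstacle to be exactly this odd-case bookkeeping: re-deriving the analogue of Lemma \ref{W'} with the reflections excised from $A$, pinning down the extra $\mathbb{Z}_2$ factor, and checking that the reflections together with $g_c$ generate the asserted extension without hidden relations; alongside this, the delicate point in both cases is verifying that $\Phi(\mathrm{Stab}(\Gamma))=\langle\epsilon^{d}\rangle$ has order exactly $c/d$. The even case serves as the clean prototype for all the homological steps.
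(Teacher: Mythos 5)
Your proposal follows essentially the same route as the paper's proof: normalize $f(z)=z$, use Lemma \ref{gc}(1) together with the maximality of $c$ to force $f(u)=\epsilon^{j}u$, reduce modulo a power of $g_{c}$ to the setting of Lemma \ref{W'}, and present $\mathcal{W}(\Gamma)$ as $\mathcal{W}'\rtimes\langle[g_{c}]\rangle$ divided by its overlap with $\mathrm{Stab}(\Gamma)$. Your explicit computation of $\Phi(\mathrm{Stab}(\Gamma))=\langle\epsilon^{d}\rangle$ is in fact more complete than the paper's argument, which only verifies $[g_{c}]^{d}\in\mathcal{W}'$ and does not check that $d$ is the exact order of $[g_{c}]$ modulo $\mathcal{W}'$.
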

\begin{proof}
 Suppose the grading $\Gamma$ is formed by $z$, $u$, blocks $\{x_{1}^{j},y_{1}^{j},\ldots,x_{l}^{j},y_{l}^{j}\}$ of type $B_{l}^{\mathrm{I}}(u,\beta_{j})$ for $j\leq t$, blocks $\{\overline{x}_{1}^{j},\overline{y}_{1}^{j},\ldots,\overline{x}_{l}^{j},\overline{y}_{l}^{j}\}$ of type $\overline{B}_{l}^{\mathrm{I}}(u,\beta_{j}')$ for $j\leq t'$, blocks
$\{a_{1}^{j},\ldots,a_{l}^{j}\}$ of type $B_{\frac{l}{2}}^{\mathrm{II}}(u,\alpha_{j})$ for $j\leq q$, blocks $\{\overline{a}_{1}^{j},\ldots,\overline{a}_{l}^{j}\}$ of type $\overline{B}_{\frac{l}{2}}^{\mathrm{II}}(u,\alpha_{j}')$ for $j\leq q'$, and the block $\{p_{1},q_{1},\ldots,p_{s},q_{s},z_{1},\ldots,z_{p}\}$ of type $B_{s,p}^{\mathrm{III}}(u)$.

(1)  Suppose $l$ is even. Take $f\in \mathrm{Aut}(\Gamma)$. Then $f(z)\in \langle z\rangle$ and $f(u)\in \langle u\rangle$. We can assume that $f(z)=z$, by replacing $f$ with the composition of $f$ with the automorphism given by $$z\mapsto \alpha^{2}z,~~u\mapsto u,~~x_{i}^{j}\mapsto \alpha x_{i}^{j},~~y_{i}^{j}\mapsto \alpha y_{i}^{j},~~
\overline{x}_{i}^{j}\mapsto \alpha \overline{x}_{i}^{j},~~\overline{y}_{i}^{j}\mapsto \alpha \overline{y}_{i}^{j},$$ $$ a_{i}^{j}\mapsto \alpha a_{i}^{j},~~~ \overline{a}_{i}^{j}\mapsto \alpha \overline{a}_{i}^{j},~~~p_{i}\mapsto \alpha p_{i},~~~ q_{i}\mapsto \alpha q_{i},~~~ z_{i}\mapsto \alpha z_{i}$$
for some $\alpha\in \mathbb{C}^{\times}$.

Consider $g_{c}$ the automorphism described in Lemma \ref{gc} (2). Note that if we apply Lemma \ref{gc} (1) to $f$, there is $\varepsilon$ a primitive $c'$th root of  unit such that $f(u)=\varepsilon u.$ So $\varepsilon\epsilon$ is a primitive $\mathrm{lcm}(c,c')$ root of unit and $fg_{c}^{-1}(u)=\varepsilon\epsilon u$.
By Lemma \ref{gc} (1), we have the corresponding splitting. By maximality of $c$, we conclude that $\mathrm{lcm}(c,c')=c$ and $\varepsilon$ is a power of $\epsilon$. Thus there is $h\in \mathbb{N}$ such that $fg_{c}^{h}(u)=u$ and thus Lemma \ref{W'} is applied and $[f]\in \langle [g_{c}], \mathcal{W}'\rangle$, where $\mathcal{W}'$ is described as in Eq. (\ref{liu999}).
Moreover, $\mathcal{W}(\Gamma)=\mathcal{W}'\rtimes\langle[g_{c}]\rangle$, since $g_{c}fg_{c}^{-1}=\varepsilon u$ for any $f\in \mathcal{W}'$.

By Lemma \ref{gc} (2), the new generator $[g_{c}]$ has order $c$. Since $g_{c}^{d}=\epsilon^{q}u,$ there exists some element $f'\in \mathrm{Stab}(\Gamma)$ such that $f'g_{c}^{d}(u)=u$ and $f'g_{c}^{d}(z)=z.$ Thus $[g_{c}]^{d}\in \mathcal{W}'$. So we have $\mathcal{W}(\Gamma)$ is isomorphic to the quotient group of  $\mathcal{W}'\rtimes \mathbb{Z}_{c}$ modulo $\mathbb{Z}_{\frac{c}{d}}$.

(2) Suppose $l$ is odd.
Now the maps $\vartheta_{j}$ and $\overline{\vartheta_{j}}$ are not longer automorphisms, but there exists $\vartheta'\in \mathrm{Aut}(\Gamma)$ such that  $\vartheta'(u)=-u,$ $\vartheta'|_{V}=\mathbf{id}$, where $V$ is the subspace spanned by $$\{z,p_{i},q_{i},z_{j}\mid i\leq s,j\leq p\},$$
and $$\vartheta'(x_{i}^{j})=(-1)^{i}y_{i}^{j},~~~\vartheta'(y_{i}^{j})=(-1)^{i+1}x_{i}^{j},~~~\vartheta'(\overline{x}_{i}^{j})=(-1)^{i}
\overline{y}_{i}^{j},~~~\vartheta'(\overline{y}_{i}^{j})=(-1)^{i}
\overline{x}_{i}^{j}.$$
If $f\in \mathrm{Aut}(\Gamma)$ fixes the subspaces spanned by the blocks, it is not difficult to check that $[f]$ belongs to the subgroup generated by
$$\{[\vartheta'],[\theta_{j}],[\overline{\theta}_{j'}],[\varrho_{i}],[\widetilde{\tau}],[\widetilde{\varsigma}]:j\leq t,j'\leq t',i\leq s\},$$
which is isomorphic to $\mathbb{Z}_{l}^{t+t'}\rtimes (\mathbb{Z}_{2}^{s+1}\times S_{s}\times S_{p})$. Take $g_{c}$ as in the even case. Similar to the even case, we have $\mathcal{W}(\Gamma)=\mathcal{W}'\rtimes\langle[g_{c}]\rangle$,
where $$\mathcal{W}'=(S_{m_{1}}\times\cdots\times S_{m_{\overline{t}}}\times S_{m_{1}'}\times\cdots\times S_{m_{\overline{t}'}'}\times  \mathbb{Z}_{2}^{s+1}\times S_{s}\times S_{p})\ltimes \mathbb{Z}_{l}^{t+t'}.$$
Hence  $\mathcal{W}(\Gamma)$ is isomorphic to
$$\frac{((S_{m_{1}}\times\cdots\times S_{m_{\overline{t}}}\times S_{m_{1}'}\times\cdots\times S_{m_{\overline{t}'}'}\times \mathbb{Z}_{2}^{s+1}\times S_{s}\times S_{p})\ltimes \mathbb{Z}_{l}^{t+t'})\rtimes \mathbb{Z}_{c}}{\mathbb{Z}_{\frac{c}{d}}}.$$
\end{proof}

\end{document}